\documentclass[11pt]{siamart0216}

\usepackage{braket,amsfonts}

\usepackage{array}

\usepackage[caption=false]{subfig}

\usepackage{pgfplots}

\newsiamthm{claim}{Claim}
\newsiamremark{rem}{Remark}
\newsiamremark{expl}{Example}
\newsiamremark{hypothesis}{Hypothesis}
\crefname{hypothesis}{Hypothesis}{Hypotheses}

\usepackage{algorithmic}

\usepackage{graphicx,epstopdf}

\Crefname{ALC@unique}{Line}{Lines}

\usepackage{amsopn}


\usepackage{xspace}
\usepackage{bold-extra}
\usepackage[most]{tcolorbox}

\colorlet{texcscolor}{blue!50!black}
\colorlet{texemcolor}{red!70!black}
\colorlet{texpreamble}{red!70!black}
\colorlet{codebackground}{black!25!white!25}


\lstdefinestyle{siamlatex}{%
	style=tcblatex,
	texcsstyle=*\color{texcscolor},
	texcsstyle=[2]\color{texemcolor},
	keywordstyle=[2]\color{texemcolor},
	moretexcs={cref,Cref,maketitle,mathcal,text,headers,email,url},
}

\tcbset{%
	colframe=black!75!white!75,
	coltitle=white,
	colback=codebackground, 
	colbacklower=white, 
	fonttitle=\bfseries,
	arc=0pt,outer arc=0pt,
	top=1pt,bottom=1pt,left=1mm,right=1mm,middle=1mm,boxsep=1mm,
	leftrule=0.3mm,rightrule=0.3mm,toprule=0.3mm,bottomrule=0.3mm,
	listing options={style=siamlatex}
}

\newtcblisting[use counter=example]{example}[2][]{%
	title={Example~\thetcbcounter: #2},#1}

\newtcbinputlisting[use counter=example]{\examplefile}[3][]{%
	title={Example~\thetcbcounter: #2},listing file={#3},#1}

\DeclareTotalTCBox{\code}{ v O{} }
{ 
	fontupper=\ttfamily\color{black},
	nobeforeafter,
	tcbox raise base,
	colback=codebackground,colframe=white,
	top=0pt,bottom=0pt,left=0mm,right=0mm,
	leftrule=0pt,rightrule=0pt,toprule=0mm,bottomrule=0mm,
	boxsep=0.5mm,
	#2}{#1}

\patchcmd\newpage{\vfil}{}{}{}
\flushbottom

\newtheorem{remark}[theorem]{Remark}

\renewcommand{\d}{\mathrm{d}}

\newcommand{\ep}{\varepsilon}
\newcommand{\ph}{\varphi}

\newcommand{\wt}{\widetilde}
\newcommand{\pr}{\prime}

\newcommand{\R}{\mathbb{R}}

\newcommand{\E}{\mathbb{E}}
\newcommand{\PP}{\mathbb{P}}
\newcommand{\T}{\mathcal{T}}

\newcommand{\F}{\mathcal{F}}
\newcommand{\lt}{\left}
\newcommand{\rt}{\right}

\begin{tcbverbatimwrite}{tmp_BJ-meanfield_header.tex}
\title{Mean-field SDE driven by a fractional Brownian motion and related stochastic control problem} 
\author{Rainer BUCKDAHN
\thanks{D\'epartement de Math\'ematiques, Universit\'e de Bretagne Occidentale, 29285, Brest, France.  
School of Mathematics, Shandong University, 250100, Jinan, P.R.China.
 The work of Rainer BUCKDAHN was supported by "FMJH Program Gaspard Monge in optimization and operation research", and the support of EDF to this program, also by the ANR project CAESARS (ANR-15-CE05-0024). \email{rainer.buckdahn@univ-brest.fr}}
\and
Shuai JING
\thanks{Corresponding author. School of Management Science and Engineering, Central University of Finance and Economics, 100081, Beijing, P.R.China.
The work of Shuai JING was partially supported by National Natural Science Foundation of China (NSFC Project No. 11301560, 71401188 and 71401195). \email{jing@cufe.edu.cn} }
}

\headers{MFSDE driven by FBM and related stochastic control problem}
{Rainer Buckdahn and Shuai Jing}
\end{tcbverbatimwrite}
\input{tmp_BJ-meanfield_header.tex}

\begin{document}
	\maketitle	
\begin{tcbverbatimwrite}{tmp_BJ-meanfield_abstract.tex}
\begin{abstract}
	We study a class of mean-field stochastic differential equations driven by a fractional Brownian motion with Hurst parameter $H\in(1/2,1)$ and a related stochastic control problem. We derive a Pontryagin type maximum principle and the associated adjoint mean-field backward stochastic differential equation driven by a classical Brownian motion, and we prove that under certain assumptions, which generalise the classical ones, the necessary condition for the optimality of an admissible control is also sufficient. 
\end{abstract}

\begin{keywords}
	Mean-field SDE,  Mean-field FBSDE, Fractional Brownian Motion,  Pontryagin Maximum Principle
\end{keywords}
\begin{AMS}
	93E20, 60H05, 60H35
\end{AMS}
\end{tcbverbatimwrite}
\input{tmp_BJ-meanfield_abstract.tex}

\section{Introduction}
In this paper we consider a class of mean-field stochastic control problem driven by a fractional Brownian motion with Hurst parameter $H\in(1/2,1)$ given by
\begin{equation}
\label{eq:control0}
X_t^u=x+\int^t_0\sigma(\PP_{X_s^u})\d B^H_s+\int^t_0 b(\PP_{(X_s^u,u_s)},X_s^u, u_s)\d s,
\end{equation}
where $x\in\R$, and $u\in\mathcal{U}([0,T])$ is an adapted control process taking values in a convex open set in $\R^m$, $\PP_{X^u_s}$ is the law of $X^u_s$ and $\PP_{(X_s^u,u_s)}$ is the joint law of $(X_s^u,u_s)$. 
Our aim is to characterise an optimal control $u^*\in \mathcal{U}([0,T])$ such that
\begin{equation}
\label{eq:optimal cost0}
J(u^*)=\inf_{u\in \mathcal{U}([0,T])}J(u).
\end{equation}
where the cost functional has the form
\begin{equation}
\label{eq:cost0}
J(u)=\E\lt[\int^T_0f\lt(\PP_{(X^u_t,u_t)},X^u_t,u_t\rt)\d t +g\lt(X_T^u,\PP_{X^u_T}\rt)\rt],
\end{equation}
for some functions $f$ and $g$ specified later.

The mean-field (or McKean-Vlasov type) stochastic differential equation (SDE) driven by classical Brownian motion was introduced by Kac \cite{Ka1} \cite{Ka2} to study the Boltzman equation and the Vlasov kinetic equation. Later Lasry and Lions \cite{LaLi} worked on mean-field stochastic games.  Henceforth the applications for mean-field problem attracted wide attention. Buckdahn et al. \cite{BDLP} \cite{BLP} studied special mean-field games and derived a kind of mean-field BSDEs associated with non local PDEs.  Carmona and Delarue \cite{CaDe} studied the existence and uniqueness of a class of  mean-field forward-backward SDEs by applying the continuation  method  proposed in Peng and Wu \cite{PeWu}. 

Stochastic control problems driven by a fractional Brownian motion also have been studied by several authors.  However, compared with the vast literatures on stochastic control problems driven by classical Brownian motion,  few has been done and there are a lot of open questions. The main reason is that fractional Brownian motion is neither a Markov process nor a semi-martingale, hence the classical methods cannot be applied directly here.  Biagini et al. \cite{BHOS} obtained a maximum principle for a stochastic control problem driven by an $m$-dimensional fractional Brownian motion with Hurst parameter $H\in(1/2, 1)^m$. For $H\in(0,1/2)$, Hu and Zhou \cite{HuZh} considered a linear stochastic optimal control problem and  obtained a Riccati equation, a BSDE driven by the fractional Brownian motion and the underlying Brownian motion. Han et al. \cite{HHS} obtained a stochastic maximum principle
for a control problem  driven by a fractional Brownian motion with $H>1/2$ and their adjoint equations is a linear BSDE again driven by the fractional Brownian motion and the underlying Brownian motion. We emphasise that their results need strong assumptions, and in particular, Malliavin differentiability of the optimal control process, which are not easily fulfilled. By applying Girsanov transformation, in \cite{BuJi} we studied a stochastic control system involving both a standard and an independent  fractional Brownian motion with Hurst parameter less than 1/2, , and we obtained as adjoint equation a BSDE driven by the Brownian motion and an independent martingale. 

In this paper, by applying Girsanov transformation, we first prove the existence and the uniqueness result for a  mean-field SDE of the form 
\begin{equation}
\label{eq:semilinear0}
X_t=\xi+\int^t_0\left(\gamma_s X_s+\sigma(s,\mathbb{P}_{(X_s,\Theta_s)}\right)\d B_s^H +\int^t_0 b(s,\mathbb{P}_{(X_s,\Theta_s)},X_s)\d s,
\end{equation}
where  $\xi$ is a square integrable random variable, $\Theta$ is a given square integrable process and $\gamma$ is a deterministic function. Then we use these results to consider a stochastic control problem with dynamics $X$ (for $\gamma=0$ and $\Theta=u$ an admissible control) and we derive the Pontryagin type maximum principle. 

We give  a necessary as well as a sufficient condition. The maximum principle leads to a  coupled system involving a mean-field forward-backward SDE, where the forward equation is a mean-field SDE driven by the fractional Brownian motion, while the backward equation is a mean-field BSDE driven only by the underlying Brownian motion, with terminal condition depending on the fractional Brownian motion. We also show that, if the time interval is small enough, the mean-field FBSDE is solvable and allows us to get an optimal control and the associated dynamics. A more general discussion of such coupled FBSDEs is foreseen for a forthcoming paper. It is worth noting that our controls are not assumed to be Malliavin differentiable.   

The paper is organised as follows: In Section 2 we give some preliminaries on fractional Brownian motion and differentiability for functions of measures. In Section 3 we study the existence and uniqueness of semi-linear  mean-field stochastic differential equations driven by a fractional Brownian motion. Our main results on the Pontryagin's maximum principle are stated in Section 4.
\section{Preliminaries}
\subsection{Fractional Brownian Motion}
Let $T>0$ be a fixed horizon. We consider a complete probability space $(\Omega,\F,\PP)$. A fractional Brownian motion $B^H=\{B^H_t, t\in[0,T]\}$ with Hurst parameter $H\in(0,1)$ is a centred Gaussian process on $(\Omega,\F,\PP)$ with covariance function
\[
R_H(t,s)=\E\lt[B^H_tB^H_s\rt]=\frac12(t^{2H}+s^{2H}-|t-s|^{2H}),\, s, t\in [0,T].
\]
For $H\in(1/2,1)$, it is
well known that the fractional Brownian motion has the representation as
follows: 
\[
B^H_t=\int^t_0K_H(t,s)\d W_s,
\]
where $W$ is a suitable Brownian motion on the space
$(\Omega, \F, \PP)$. The kernel function is given by
\[
K_H(t,s)=c_H s^{1/2-H}\int^t_s (u-s)^{H-3/2}u^{H-1/2}\d u, \quad t>s,
\]
with the constant
\[
c_H=[H(2H-1)/\beta(2-2H,H-1/2)]^{1/2},
\]
where $\beta(\alpha,\gamma)=\Gamma(\alpha+\gamma)/(\Gamma(\alpha)\Gamma(\gamma))$ is the Beta function and $\Gamma(\alpha)=\int^\infty_0x^{\alpha-1}e^{-x}\d x$ is the Gamma function. 
\subsection{Fractional Calculus}
For a detailed account on the fractional calculus theory, we refer,
for instance, to Biagini et al. \cite{BHOZ} and Samko et al. \cite{Sa}.

Let $f:[0,T]\rightarrow \R$ be a Lebesgue integrable function,
and $\alpha \in (0,1)$. The fractional Riemann Liouville integrals of $f$ are defined as follows: 

The {\it right--sided} and {\it left--sided fractional integrals} $I^\alpha_{T-}(f)(x)$ and $I^\alpha_{0+}(f)(x)$ of $f$ of order $\alpha$ are given by
$$
I^\alpha_{T-}(f)(x)=\frac{1}{\Gamma (\alpha)} \int^T_x
\frac{f(u)}{(u-x)^{1-\alpha}}\d u, \quad \hbox{\rm for almost all}\;\;x\in
[0,T].
$$
and 
$$
I^\alpha_{0+}(f)(x)=\frac{1}{\Gamma (\alpha)} \int^x_0
\frac{f(u)}{(x-u)^{1-\alpha}}\d u, \quad \hbox{\rm for almost all}\;\;x\in
[0,T].
$$

Note that $I^\alpha_{T-}(f)(x)$ and $I^\alpha_{0+}(f)(x)$ are well-defined because the Fubini theorem implies that they are functions in $L^p ([0,T])$,  $p\geq 1$,
whenever $f\in L^p ([0,T])$.

We denote by $I^\alpha_{T-}(L^p)$ (respectively, $I^\alpha_{0+}(f)(x)$), $\  p\geq 1$, the families of all
functions $f\in L^p ([0,T])$ such that
\begin{equation}
\label{eq:2.2.1} f=I^\alpha_{T-}(\ph), (\textrm{respectively}, f=I^\alpha_{0+}(\ph)),
\end{equation}
for some $\ph \in L^p([0,T])$. Samko et al. \cite{Sa} (Theorem
13.2) provide a characterization of the space $I^\alpha_{T-}(L^p)$,
$p>1$. The function $\ph$ satisfying $(\ref{eq:2.2.1})$ coincides with the {\it right--sided fractional derivative} 
\begin{equation}
\label{eq:2.2.2} (D^\alpha_{T-}f)(x)=\frac{1}{\Gamma (1-\alpha)}
\biggl(\frac{f(x)}{(T-x)^\alpha}+\alpha \int^T_x
\frac{f(x)-f(u)}{(u-x)^{1+\alpha}}\d u\biggr),
\end{equation}
respectively, the {\it left--sided functional derivative}
\begin{equation}
\label{eq:2.2.3} (D^\alpha_{0+}f)(x)=\frac{1}{\Gamma (1-\alpha)}
\biggl(\frac{f(x)}{x^\alpha}+\alpha \int^x_0
\frac{f(x)-f(u)}{(x-u)^{1+\alpha}}\d u\biggr),
\end{equation}
when the integrals are well defined. Moreover, we have
\begin{equation}
	\label{eq:2.2.4}
D_{0+}^{\alpha}f=\frac{\d}{\d x}I_{0+}^{1-\alpha} f,
\end{equation}
and
\begin{equation}
\label{eq:2.2.5}
D_{T-}^{\alpha}f=\frac{\d}{\d x}I_{T-}^{1-\alpha} f,
\end{equation}
if everything is well-defined.

Furthermore, we have the following integration by parts formula for the fractional integrals
\begin{equation}
\label{eq:2.2.6}\int^T_0I_{0+}^\alpha f(x)g(x)\d x=\int^T_0f(x)I_{T-}^\alpha g(x)\d x,
\end{equation}
if $f\in L^p[0,T]$, $g\in L^q[0,T]$, $1/p+1/q\leq 1+\alpha$. The corresponding integration by parts formula for the fractional derivatives is
\begin{equation}
\label{eq:2.2.7}\int^T_0D_{0+}^\alpha f(x)g(x)\d x=\int^T_0f(x)D_{T-}^\alpha g(x)\d x,
\end{equation}
for $f\in I^\alpha_{0+}(L^p[0,T])$, $g\in I_{T-}^\alpha (L^q[0,T])$, $1/p+1/q\leq 1+\alpha$.

\subsection{Stochastic integrals with respect to fractional Brownian motion}
Most of the results in this section can be found in Biagini et al. \cite{BHOZ}, Han et al. \cite{HHS}  and Hu \cite{Hu}.

For the kernel function $K_H(t,s)$, let $\mathcal{H}$ be the set of functions $f$ which can be represented as 
$$f(t)=\int^t_0K_H(t,s)\hat{f}(s)\d s$$
for some $\hat{f}\in L^2([0,T])$. We denote by $\mathcal{E}$ be the space of step functions on $[0,T]$ and define  $\varphi(t,s)=H(2H-1)|s-t|^{2H-2}$.   We consider the scalar product on $L^2([0,T])$:
$$
\langle f,g\rangle_H:=\int^T_0\int^T_0f(s)g(t)\varphi(t,s)\d s \d t,
$$
and we define a linear map $\mathcal{I}$ on the space $\mathcal{E}$ by
\begin{equation*}
	\begin{aligned}[c]
	 \mathcal{I}: (L^2([0,T]),\langle,\rangle_H)&\rightarrow \mathcal{H}\\
	I_{[0,t]}&\mapsto R(t,\cdot).
	\end{aligned}
\end{equation*}
Then the extension of this map to the closure of $(L^2([0,T]),\langle,\rangle_H)$ is a representation of $\mathcal{H}$. The map $\mathcal{I}$ also induces the following isometry:
\begin{equation*}
\begin{aligned}[c]
\mathcal{J}: \quad \overline{(L^2([0,T]),\langle,\rangle_H)}&\rightarrow&L^2(\Omega)\\
I_{[0,t]}&\mapsto&B^H_t.
\end{aligned}
\end{equation*} 
This allows to define the Wiener integrals with respect to $B^H$:
$$
B^H(\psi):=\mathcal{J}(\psi), \qquad \psi\in\mathcal{H}.
$$
We also use the notations $B^H(\psi)=\int^T_0\psi(t)\d B^H(t)$ and $B^H(\psi I_{[0,t]})=\int^t_0\psi(s)\d B^H_s,$ $t\in[0,T]$.

We denote by $\mathcal{S}$ the set of all polynomial functions of $B^H(\psi_j)=\int^T_0\psi_j(t)\d B^H(t)$. For an element $F\in\mathcal{S}$, having the form 
$$
F=g(B^H(\psi_1), \cdots, B^H(\psi_n)), 
$$
where $g$ is a polynomial of $n$ variables, we define its Malliavin derivative $D_s^H F$ by
$$
D^H_s F:=\sum_{i=1}^{n}\frac{\partial g}{\partial x_i}(B^H(\psi_1), \cdots, B^H(\psi_n))\psi_i(s),\qquad  0\le s \le T.
$$
For any $F\in\mathcal{S}$ as above and $p\in(0,\infty)$, we define the following norm 
$$
\|F\|_{H,1,p}:=\|F\|_p+\left[\mathbb{E}\left(\int^T_0\left|D_t^H F\right|^2\d t\right)^{p/2}\right]^{1/p}.
$$
We denote by $\mathbb{D}_{H,1,p}$ the Banach space obtained by completing $\mathcal{S}$ with respect to the norm $\|\cdot\|_{H,1,p}$. 

The classical Malliavin derivative $D^W$ with respect to the underlying Brownian motion $W$ and the space $\mathbb{D}_{1,p}^W$ can be defined in a similar and classical way, which we omit here.

We define an operator $K_H$ on $\mathcal{H}$ as:
\[
(K_H\psi)(s)=c_H\Gamma(H-1/2)s^{1/2-H}I_{0+}^{H-1/2}(u^{H-1/2}\psi(u))(s).
\] 
Then its adjoint operator $K^*_H$  on $\mathcal{H}$ is:
\[
(K^*_H\psi)(s)=c_H\Gamma(H-1/2)s^{1/2-H}I_{T-}^{H-1/2}(u^{H-1/2}\psi(u))(s),
\]
and  its inverse operator ${K^*_H}^{-1}$ is:
\[
({K^*_H}^{-1}\psi)(s)=\frac{1}{c_H\Gamma(H-1/2)}s^{1/2-H}(D_{T-}^{H-1/2}u^{H-1/2}\psi(u))(s).
\]
For $\psi\in\mathcal{H}$, the following relationship holds:
\[
\int^T_0 \psi(t)\d B^H(t)=\int^T_0({K}_H^*\psi)(t)\d W(t)
\]
and
\[
\int^T_0 \psi(t)\d W(t)=\int^T_0({{K}_H^*}^{-1}\psi)(t)\d B^H(t).
\]

Therefore, if we denote by $\mathbb{F}=\{\mathcal{F}_t, t\in[0,T]\}$ the filtration generated by the fractional Brownian motion $\{B^H_t\}_{t\in[0,T]}$, it coincides with the one generated by the underlying Brownian motion $\{W_t, t\in[0,T]\}$.

We have the following proposition (see also Proposition 5.2.1 in Nualart \cite{Nu}):
\begin{proposition}
	\label{prop_DhDw}
	If $F\in\mathbb{D}^W_{1,2}\bigcap\mathbb{D}_{H,1,2}$, then
	$$D^H_s F={K_H^*}^{-1} D^W_s F. $$
\end{proposition}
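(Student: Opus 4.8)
The plan is to establish the identity $D^H_s F = {K_H^*}^{-1} D^W_s F$ first on the dense set $\mathcal{S}$ of polynomial functionals, and then pass to the limit using the hypothesis $F\in\mathbb{D}^W_{1,2}\cap\mathbb{D}_{H,1,2}$. The key algebraic observation is the following: if $\psi\in\mathcal{H}$, then the Wiener integral $B^H(\psi)=\int_0^T\psi(t)\,\d B^H(t)$ can be rewritten, via the transfer principle recalled just above the statement, as $\int_0^T (K_H^*\psi)(t)\,\d W(t)$. Hence a generic element $F=g\bigl(B^H(\psi_1),\dots,B^H(\psi_n)\bigr)\in\mathcal{S}$ is also expressible as $g\bigl(W(K_H^*\psi_1),\dots,W(K_H^*\psi_n)\bigr)$, a polynomial functional of the underlying Brownian motion.

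First I would compute both sides on such an $F$. By definition of the fractional Malliavin derivative,
\[
D^H_s F=\sum_{i=1}^n \frac{\partial g}{\partial x_i}\bigl(B^H(\psi_1),\dots,B^H(\psi_n)\bigr)\,\psi_i(s),
\]
while by the classical definition applied to the rewritten form,
\[
D^W_s F=\sum_{i=1}^n \frac{\partial g}{\partial x_i}\bigl(W(K_H^*\psi_1),\dots,W(K_H^*\psi_n)\bigr)\,(K_H^*\psi_i)(s)
=\sum_{i=1}^n \frac{\partial g}{\partial x_i}\bigl(B^H(\psi_1),\dots,B^H(\psi_n)\bigr)\,(K_H^*\psi_i)(s).
\]
Applying the operator ${K_H^*}^{-1}$ to the latter (it acts on the $s$–variable and is linear) and using ${K_H^*}^{-1}K_H^*\psi_i=\psi_i$, one gets exactly the right-hand side of the first display. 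So the identity holds pointwise for every $F\in\mathcal{S}$.

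The second step is the limiting argument. Take $F\in\mathbb{D}^W_{1,2}\cap\mathbb{D}_{H,1,2}$ and a sequence $F_k\in\mathcal{S}$ with $F_k\to F$ in $\|\cdot\|_{H,1,2}$; since $F\in\mathbb{D}^W_{1,2}$ as well and $\mathcal{S}$ is a core for $D^W$, one can moreover arrange (or extract a subsequence) so that $F_k\to F$ in $\|\cdot\|_{1,2}^W$ too, whence $D^W_\cdot F_k\to D^W_\cdot F$ in $L^2(\Omega\times[0,T])$ and $D^H_\cdot F_k\to D^H_\cdot F$ in $L^2(\Omega\times[0,T])$. The subtlety is that ${K_H^*}^{-1}$ need not be bounded on $L^2([0,T])$, so I cannot simply pass ${K_H^*}^{-1}$ through the limit in $L^2$. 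Instead I would test against a fixed $\psi\in\mathcal{H}$: using the adjoint relation and the $L^2$–convergences,
\[
\langle D^H_\cdot F,\psi\rangle_H
=\lim_k \langle D^H_\cdot F_k,\psi\rangle_H
=\lim_k \langle {K_H^*}^{-1}D^W_\cdot F_k,\psi\rangle_H
=\lim_k \bigl\langle D^W_\cdot F_k, (K_H^*)^{-1,*}\psi\bigr\rangle
=\bigl\langle {K_H^*}^{-1}D^W_\cdot F,\psi\bigr\rangle_H,
\]
where in the last step one uses that ${K_H^*}^{-1}$ and $K_H^*$ are (formal) adjoints with respect to the pairing of $\langle\cdot,\cdot\rangle_H$ against the Euclidean inner product, together with the already-known finiteness of $\|{K_H^*}^{-1}D^W_\cdot F\|_H$ coming from $F\in\mathbb{D}_{H,1,2}$. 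Since $\psi$ ranges over the dense set $\mathcal{H}$, the two elements agree.

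The main obstacle I anticipate is precisely this unboundedness of ${K_H^*}^{-1}$: one must be careful to phrase the limit passage weakly (pairing against elements of $\mathcal{H}$, or equivalently against $K_H^*\psi$ under $\d W$) rather than in the $L^2$–norm, and one must invoke the hypothesis $F\in\mathbb{D}_{H,1,2}$ to guarantee that the candidate limit ${K_H^*}^{-1}D^W_\cdot F$ actually lies in the relevant space so that the pairing makes sense. Everything else is a routine density argument.
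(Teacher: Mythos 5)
Since the paper itself gives no proof of this proposition (it is imported from the cited literature, cf.\ \cite{Hu}, \cite{BHOZ}), your proposal is judged on its own: the strategy -- verify the identity on the polynomial functionals $\mathcal{S}$ via the transfer $B^H(\psi)=\int_0^T(K_H^*\psi)(t)\,\d W_t$, so that $D^W_sF=\sum_i\partial_i g(\cdots)(K_H^*\psi_i)(s)=K_H^*\bigl(D^H_\cdot F\bigr)(s)$, and then extend by density -- is exactly the standard argument, and your computation on $\mathcal{S}$ is correct.

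The genuine soft spot is the limiting step. You claim one can ``arrange (or extract a subsequence)'' so that a single sequence $F_k\in\mathcal{S}$ converges to $F$ both in $\|\cdot\|_{H,1,2}$ and in the $\mathbb{D}^W_{1,2}$-norm. Extracting a subsequence cannot achieve this: convergence in $\mathbb{D}_{H,1,2}$ only gives $F_k\to F$ in $L^2(\Omega)$, and $D^W$ is closed but not continuous for $L^2$-convergence, so nothing forces $D^W F_k\to D^W F$; density of $\mathcal{S}$ in each space separately does not produce one sequence good for both norms, and your chain of equalities uses $D^W F_k\to D^W F$ in $L^2(\Omega\times[0,T])$. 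There is also a mild circularity in pairing against $\langle {K_H^*}^{-1}D^W_\cdot F,\psi\rangle_H$ before that object is known to live in the relevant space; it is cleaner to prove $K_H^*D^H_\cdot F=D^W_\cdot F$ and then invert. Both issues are repaired by one observation special to $H>1/2$: $\|K_H^*\psi\|_{L^2([0,T])}=\|\psi\|_{\mathcal H}\le b_H\|\psi\|_{L^{1/H}([0,T])}\le C\|\psi\|_{L^2([0,T])}$, i.e.\ $K_H^*$ is bounded on $L^2([0,T])$. Taking $F_k\in\mathcal{S}$ with $F_k\to F$ in $\mathbb{D}_{H,1,2}$ only, the identity on $\mathcal{S}$ gives $D^W F_k=K_H^*D^H F_k\to K_H^*D^H F$ in $L^2(\Omega\times[0,T])$, while $F_k\to F$ in $L^2(\Omega)$; closedness of $D^W$ then yields $D^W F=K_H^*D^H F$, and applying ${K_H^*}^{-1}$ (well defined on the range of $K_H^*$) gives the claim, with no simultaneous approximation needed. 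With this repair your proof is complete; as written, the density step is not justified.
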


However, it is more convenience for fractional Brownian motions to use another Malliavin derivative, which is defined as
\begin{equation}
\label{def_DDH}
\mathbb{D}_s^H F=\int^T_0\varphi(s-r)D_r^H F\d r,
\end{equation}
where 
$$
\varphi(r)=H(2H-1)|r|^{2H-2}, \qquad 0\le r\le T.
$$
From Section 5.8 in \cite{Hu} we know
$$
\mathbb{D}^H_s F=K_HK_H^* D^H_sF.
$$

Now we can define  by the following result the more general Skorohod type integral $\int^T_0 f(t)\d B^H_t$ as the divergence operator related to $\mathbb{D}_t^H$ (See, for example Theorem 6.23 in \cite{Hu}, or Proposition 2.3 in \cite{HHS}).
\begin{definition}
\label{prop_duality}
Let $f: ([0,T]\times\Omega,\mathcal{B}([0,T])\otimes\mathcal{F})\rightarrow (\mathbb{R},\mathcal{B}(\R))$ be a jointly measurable square integrable process. We say that $f$ is integrable with respect to $B^H$ ($f\in \mathrm{Dom} (\delta_H)$ ), if there is some $\delta_H (f)\in L^2(\Omega, \mathcal{F},\PP)$ such that for all $G\in \mathbb{D}_{H,1,2}$,  
\begin{equation}
\label{eq_duality}
\mathbb{E}\left[G\delta_H(f)\right]=\int^T_0\mathbb{E}\left[f(t)\mathbb{D}_t^H G\right]\d t.
\end{equation}
\end{definition}
If $fI_{[s,t]}\in \mathrm{Dom}(\delta_H),$ we write $\int^t_s f(r)\d B^H_r:=\delta_H(fI_{[s,t]})$,  $s,t\in[0,T]$.

From the classical Malliavin calculus theory (refer to, Nualart \cite{Nu} and Buckdahn \cite{Bu}), we have the following proposition (see also Proposition 6.25 in Hu \cite{Hu}).
\begin{proposition} \label{prop_malliavin}
If $f\in\mathrm{Dom}(\delta_H)$, it holds that:
\begin{equation}
\label{eq:skorohodsquare}
\begin{aligned}
\E\lt[\lt|\int^T_0f(t)\d B_t^H\rt|^2\rt]=\E\lt[\int^T_0|K^*_Hf(t)|^2\d t\rt]+2\E\lt[\int^T_0\int^s_0\mathbb{D}_s^H f(r) \mathbb{D}^H_rf(s)\d r\d s\rt].
\end{aligned}
\end{equation}
\end{proposition}

The Stratonovich integral with respect to fractional Brownian motion can be defined from the Skorohod integral as follows (see  Theorem 3.9 in \cite{DHP}).
\begin{proposition}
	\label{def_stratonovich}
	Let $f:\Omega\times[0,T]\to\R$ be a stochastic process which is Malliavin differentiable such that the following holds:
	$$
	\E\lt[\int^T_0\int^T_0|f(s)f(t)|\varphi(s-t)\d s\d t+\int^T_0\int^T_0|\mathbb{D}^H_s f(t)|^2\d s \d t\rt]<\infty.
	$$
	Then the Stratonovich integral $\int^T_0f(t)\d^\circ B^H_t$ exists and 
	\begin{equation}
	\label{eq:def_Stato}
	\int^T_0f(t)\d^\circ B^H_t=\int^T_0f(t)\d B_t^H+\int^T_0\mathbb{D}^H_tf(t)\d t.
	\end{equation}
\end{proposition}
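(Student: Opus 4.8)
The plan is to derive \reff{eq:def_Stato} from the definition of the Stratonovich integral as a limit of Riemann--Stieltjes sums, combined with the elementary interplay between pointwise multiplication and the divergence operator $\delta_H$. Fix a partition $\pi\colon 0=t_0<t_1<\cdots<t_N=T$ of mesh $|\pi|=\max_i(t_{i+1}-t_i)$ and consider
\[
S_\pi(f)=\sum_{i=0}^{N-1}f(t_i)\bigl(B^H_{t_{i+1}}-B^H_{t_i}\bigr)=\sum_{i=0}^{N-1}f(t_i)\,\delta_H\bigl(I_{(t_i,t_{i+1}]}\bigr),
\]
using that for a deterministic integrand $\delta_H$ reduces to the Wiener integral. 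Since $H>1/2$, $B^H$ has vanishing quadratic variation, so replacing $f(t_i)$ by a midpoint or symmetric value changes $S_\pi(f)$ only by a term tending to $0$; it therefore suffices to treat these forward sums (the argument is parallel if one uses instead the Russo--Vallois regularised definition of $\int_0^T f\,\d^\circ B^H_t$).

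The algebraic heart of the proof is the product rule: for $F\in\mathbb{D}_{H,1,2}$ and a deterministic $u\in\mathcal H$, the process $Fu$ lies in $\mathrm{Dom}(\delta_H)$ and
\[
F\,\delta_H(u)=\delta_H(Fu)+\int_0^T\bigl(\mathbb{D}^H_sF\bigr)\,u(s)\,\d s,
\]
which one verifies by pairing both sides with an arbitrary $G\in\mathbb{D}_{H,1,2}$ in the duality \reff{eq_duality} and invoking the Leibniz rule for $D^H$, hence for $\mathbb{D}^H$ (see \reff{def_DDH}). Taking $F=f(t_i)$ and $u=I_{(t_i,t_{i+1}]}$ and summing over $i$ yields the decomposition
\[
S_\pi(f)=\delta_H(f_\pi)+R_\pi,\qquad f_\pi:=\sum_{i=0}^{N-1}f(t_i)\,I_{(t_i,t_{i+1}]},\qquad R_\pi:=\sum_{i=0}^{N-1}\int_{t_i}^{t_{i+1}}\mathbb{D}^H_sf(t_i)\,\d s.
\]

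It then remains to let $|\pi|\to 0$ in each piece. For the Skorohod part, the first integrability hypothesis gives $f_\pi\to f$ in $L^2(\Omega;\mathcal H)$ and, together with the second, in the graph norm of $\delta_H$; since by the isometry-type identity \reff{eq:skorohodsquare} the quantity $\E|\delta_H(g)|^2$ is controlled by the $\mathcal H$-norm of $g$ and of its Malliavin derivative, we obtain $\delta_H(f_\pi)\to\delta_H(f)=\int_0^T f(t)\,\d B^H_t$ in $L^2(\Omega)$, having checked in passing that $f\in\mathrm{Dom}(\delta_H)$. For the trace part I would write
\[
R_\pi-\int_0^T\mathbb{D}^H_sf(s)\,\d s=\sum_{i=0}^{N-1}\int_{t_i}^{t_{i+1}}\bigl(\mathbb{D}^H_sf(t_i)-\mathbb{D}^H_sf(s)\bigr)\,\d s,
\]
and show that this tends to $0$ in $L^2(\Omega)$ using the $L^2(\Omega\times[0,T]^2)$-bound on $(s,t)\mapsto\mathbb{D}^H_sf(t)$: first for a dense class of elementary integrands $f(t)=\sum_j g_j(t)\,G_j$ with the $g_j$ smooth and $G_j\in\mathcal S$, for which the field $(s,t)\mapsto\mathbb{D}^H_sf(t)$ is jointly continuous, so that $R_\pi\to\int_0^T\mathbb{D}^H_sf(s)\,\d s$; and then for general $f$ by approximation together with the uniform estimate. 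Combining the two limits gives the existence of the Stratonovich integral and, after the harmless renaming $s\leftrightarrow t$, the identity \reff{eq:def_Stato}.

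I expect the convergence of the trace term $R_\pi$ to be the main obstacle. The hypothesis controls only the \emph{size} of $(s,t)\mapsto\mathbb{D}^H_sf(t)$ in $L^2(\Omega\times[0,T]^2)$; it does not by itself make the ``diagonal'' $\mathbb{D}^H_tf(t)$ a bona fide process, nor does it guarantee that the non-symmetric sums built from $\mathbb{D}^H_sf(t_i)$ converge to $\int_0^T\mathbb{D}^H_tf(t)\,\d t$. Making this rigorous forces one to single out the right dense subclass of integrands, to establish joint continuity of the Malliavin-derivative field there, and to justify an interchange of limits. The remaining ingredients --- the product rule, the memberships $f,f_\pi\in\mathrm{Dom}(\delta_H)$, and the continuity of $\delta_H$ --- are routine consequences of \reff{eq_duality} and \reff{eq:skorohodsquare}.
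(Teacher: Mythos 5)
You should first note the benchmark: the paper does not prove this proposition at all --- it is quoted from Theorem 3.9 of \cite{DHP} --- so the comparison is with the proof given there, which follows exactly the strategy you outline: forward Riemann sums, the pointwise decomposition $F\,\delta_H(u)=\delta_H(Fu)+\int_0^T(\mathbb{D}^H_sF)\,u(s)\,\d s$ for deterministic $u$, and separate passage to the limit in the Skorohod part and in the trace part. So your route is the standard one, not a different one.

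The gap you flag at the trace term is, however, genuine, and the remedy you propose cannot close it. The hypothesis controls $(s,t)\mapsto\mathbb{D}^H_sf(t)$ only in $L^2(\Omega\times[0,T]^2)$, and the diagonal trace $f\mapsto\int_0^T\mathbb{D}^H_tf(t)\,\d t$ is not continuous with respect to that norm: the diagonal is a null set of $[0,T]^2$, so two integrands can be close (together with their derivative fields) in $L^2(\Omega\times[0,T]^2)$ while their diagonals, and hence the limits of the sums $R_\pi$, differ arbitrarily. Consequently the scheme ``prove it for elementary $f$ with jointly continuous derivative field, then extend by approximation using the uniform estimate'' fails, because there is no estimate bounding $R_\pi$ or $\int_0^T\mathbb{D}^H_tf(t)\,\d t$ by the assumed quantities. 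In \cite{DHP} this is handled by restricting to a class of integrands (their $\mathbb{L}(0,T)$-type conditions) whose very definition supplies the missing ingredients: existence of the trace and continuity, near the diagonal, of the derivative field, which is what the statement reproduced in the paper tacitly presupposes. Two further soft spots of the same nature: the convergence $\delta_H(f_\pi)\to\delta_H(f)$ via \reff{eq:skorohodsquare} requires $f_\pi\to f$ together with the derivative fields, i.e.\ some $L^2$-continuity in $t$ of $f$ and of $\mathbb{D}^H_sf(t)$, again not implied by the size hypothesis alone; and the claim that forward and symmetric sums differ by a negligible term because $B^H$ has vanishing quadratic variation needs regularity of $f$ that has not been assumed. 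In short: correct strategy, identical to the cited source, but the limiting arguments require extra regularity that the cited theorem builds into its class of integrands, and your approximation argument cannot substitute for it.
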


The following proposition can be derived from Remark 2.7.4 in Mishura \cite{Mi}.
\begin{proposition}
\label{prop_ito_stratonovich}
For $t\in[0,T]$, let  $F_1(t)=\int^t_0f_1(s)\d s+\int^t_0f_2(s)\d^\circ B^H_s$ and $G_1(t)$$=\int^t_0g_1(s)\d s+\int^t_0 g(s)\d W_s$, where $f_1, g_1$ are integrable processes,  $f_2$ satisfies the conditions in Proposition \ref{def_stratonovich} and $g_2$ is continuous square integrable adapted process. Then 
we have
\begin{equation}
\label{eq:ito_strato}
\d F_1(t)G_1(t)=F_1(t)g_2(t)\d W_t+G_1(t)f_2(t)\d ^\circ B_t^H +[F_1(t)g_1(t)+G_1(t)f_1(t)]\d t. 
\end{equation}
\end{proposition}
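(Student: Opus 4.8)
The plan is to derive \eqref{eq:ito_strato} from a Riemann-sum approximation of the product $F_1(t)G_1(t)$, reducing it to the ordinary It\^o product rule on the Brownian side, to the Stratonovich (Leibniz) rule on the fractional side, and using the crucial fact that, for $H>1/2$, the fractional Brownian motion $B^H$ and the underlying Brownian motion $W$ have \emph{no} mutual quadratic covariation --- indeed $K_H(t,t)=0$, which is equivalent to $\sum_i|\Delta_i B^H|^2\to 0$ along any sequence of partitions with vanishing mesh, where $\Delta_i X:=X_{t_{i+1}}-X_{t_i}$.

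Fix $t\in[0,T]$ and a partition $0=t_0<\cdots<t_n=t$ whose mesh tends to $0$. From the telescoping identity
\[
F_1(t)G_1(t)-F_1(0)G_1(0)=\sum_i F_1(t_i)\,\Delta_iG_1+\sum_i G_1(t_i)\,\Delta_iF_1+\sum_i \Delta_iF_1\,\Delta_iG_1,
\]
I would treat the three sums separately. The first converges in probability to $\int_0^t F_1(s)\,\d G_1(s)=\int_0^t F_1(s)g_2(s)\,\d W_s+\int_0^t F_1(s)g_1(s)\,\d s$, because $F_1$ is a continuous adapted square-integrable process and $G_1$ a classical It\^o process. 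In the second sum, the absolutely continuous part of $\Delta_iF_1$ gives $\int_0^t G_1(s)f_1(s)\,\d s$ in the limit, while the fractional part gives $\int_0^t G_1(s)f_2(s)\,\d^\circ B^H_s$: for $H>1/2$ the forward (left-point Riemann-sum) integral with respect to $B^H$ exists and coincides with the Stratonovich integral of Proposition \ref{def_stratonovich}, since the H\"older exponents of $G_1$ (close to $1/2$) and of $B^H$ (equal to $H$) add up to more than $1$. In the third sum, the summands containing an absolutely continuous factor are of order the mesh, while the leading contribution $\sum_i f_2(t_i)g_2(t_i)\,\Delta_iB^H\,\Delta_iW$ tends to $0$: after a standard localization making $f_2$ and $g_2$ bounded, Cauchy--Schwarz bounds it by a constant times $\bigl(\sum_i|\Delta_iB^H|^2\bigr)^{1/2}\bigl(\sum_i|\Delta_iW|^2\bigr)^{1/2}$, and $\E\bigl[\sum_i|\Delta_iB^H|^2\bigr]\le(\text{mesh})^{2H-1}T\to 0$ while $\E\bigl[\sum_i|\Delta_iW|^2\bigr]=t$. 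Adding the three limits produces exactly \eqref{eq:ito_strato}.

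A variant closer to the Skorohod calculus recalled above is to rewrite $F_1$ via Proposition \ref{def_stratonovich} as $F_1(t)=\int_0^t\bigl(f_1(s)+\mathbb{D}^H_s f_2(s)\bigr)\,\d s+\int_0^t f_2(s)\,\d B^H_s$, so that $F_1$ is a Skorohod fractional integral plus a drift, then apply an It\^o product formula for such a process times the It\^o process $G_1$ (essentially Remark~2.7.4 in Mishura \cite{Mi}, also obtainable from $\delta_H(\,\cdot\,)=\delta_W(K_H^*\,\cdot\,)$ and the classical Skorohod product formula on the $W$-side), and finally convert the resulting Skorohod fractional integral back into a Stratonovich one; again $K_H(t,t)=0$ kills the only candidate cross-variation term.

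The drift contributions and the first (It\^o) Riemann sum are routine. The step needing real care is the second sum: one has to check that $G_1f_2$ satisfies the integrability and differentiability hypotheses of Proposition \ref{def_stratonovich}, so that the right-hand side of \eqref{eq:ito_strato} is well defined, and that the left-point Riemann sums genuinely converge to the Stratonovich integral --- that is, that forward and Stratonovich integration with respect to $B^H$ coincide on this class of integrands when $H>1/2$. In the Skorohod variant the analogous subtle point is to keep track of all Malliavin-derivative correction terms generated by the product formula and by the two Stratonovich/Skorohod conversions, and to verify that their only joint contribution cancels precisely because $\langle B^H,W\rangle\equiv 0$. Once these are in place, the passage to the limit (in $L^1$, with uniform integrability from the square-integrability assumptions) is standard.
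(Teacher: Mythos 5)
You should first note that the paper does not actually prove this proposition: it is stated with the single remark that it ``can be derived from Remark 2.7.4 in Mishura \cite{Mi}'', so there is no internal argument to match. Your second variant --- writing $F_1$ as a Skorohod ($\delta_H$) integral plus a drift via Proposition \ref{def_stratonovich}, applying a product formula for a divergence-type fractional integral against the It\^o process $G_1$, and converting back --- is essentially the route the citation covers, while your first, Riemann-sum argument is an independent and more elementary strategy. The structural facts you invoke there are correct and relevant: for $H>1/2$ one has $\sum_i|\Delta_iB^H|^2\to0$, and $K_H(t,t)=0$ makes $\E[\Delta_iB^H\,\Delta_iW]=\int_{t_i}^{t_{i+1}}K_H(t_{i+1},s)\,\d s=O(|\Delta_i|^{H+1/2})$, so the cross term indeed dies; this is a genuinely different and instructive way to see why no $\d B^H\cdot\d W$ correction appears in \eqref{eq:ito_strato}.

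However, as a proof the sketch has real gaps. The whole content of the proposition sits in the step you yourself flag and then defer: showing that the left-point sums $\sum_iG_1(t_i)f_2(t_i)\Delta_iB^H$ converge to the Stratonovich integral \emph{as defined in Proposition \ref{def_stratonovich}}, i.e.\ to $\delta_H(G_1f_2I_{[0,t]})+\int_0^t\mathbb{D}^H_s\bigl(G_1(s)f_2(s)\bigr)\d s$, and that $G_1f_2$ satisfies the hypotheses there (note $\mathbb{D}^H_sG_1(s)\neq0$ in general, since $B^H$ is built from $W$, so this identification is not a formality). Without that equivalence the Riemann-sum limit only produces a Young/forward integral, not the object appearing in the statement, and this is precisely what Mishura's Remark 2.7.4 (or an Al\`os--Nualart type result) supplies. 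In addition, your reduction of $\sum_iG_1(t_i)\Delta_iF_1$ and of the third sum to expressions involving $f_2(t_i)\Delta_iB^H$ silently uses pathwise regularity (continuity, local boundedness) of $f_2$ and $f_1$ that is not among the stated hypotheses, where $f_2$ is only assumed Malliavin differentiable with the integrability of Proposition \ref{def_stratonovich}; and the convergence of $\sum_iF_1(t_i)\Delta_iG_1$ to the It\^o integral $\int_0^tF_1g_2\,\d W$ requires $F_1$ to be adapted, which holds when $f_1,f_2$ are adapted but should be stated, since otherwise the term $F_1g_2\,\d W$ would itself have to be read as a Skorohod integral. Filling in the deferred identification (or simply carrying out your second variant with all Malliavin correction terms tracked) is what would turn the sketch into a proof.
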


Combining Proposition \ref{def_stratonovich} and \ref{prop_ito_stratonovich}, it is easy to deduce the following result. 
\begin{corollary}
	\label{corol}
	For $t\in[0,T]$, let now $F(t)=\int^t_0f_1(s)\d s+\int^t_0f_1(s)\d B^H_s$ and $G(t)=\int^t_0g_1(s)\d s+\int^t_0 g_2(s)\d W_s$, where $f_1, g_1$ are integrable processes,  $f_2$ satisfies the conditions in Proposition \ref{def_stratonovich} and $g_2$ is continuous square integrable adapted process.  Then 
	we have
	\begin{equation}
	\label{eq:ito_WH}
	\d F(t)G(t)=F(t)g_2(t)\d W_t+G(t)f_2(t)\d  B_t^H +f_2(t)\mathbb{D}^H_tG(t)\d t. 
	\end{equation}
\end{corollary}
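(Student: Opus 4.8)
The plan is to reduce the claim to the Stratonovich product rule of Proposition \ref{prop_ito_stratonovich}, by first passing from the Skorohod integral appearing in $F$ to a Stratonovich integral with the help of \reff{eq:def_Stato}, then applying Proposition \ref{prop_ito_stratonovich}, and finally converting the Stratonovich term that shows up in the product back into a Skorohod integral, again via \reff{eq:def_Stato}.

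First I would rewrite the forward process. Applying \reff{eq:def_Stato} to the integrand $f_2$ (which satisfies the hypotheses of Proposition \ref{def_stratonovich}) gives $\int_0^t f_2(s)\,\d B^H_s=\int_0^t f_2(s)\,\d^\circ B^H_s-\int_0^t\mathbb{D}^H_s f_2(s)\,\d s$, so that
\begin{equation*}
F(t)=\int_0^t\bigl(f_1(s)-\mathbb{D}^H_s f_2(s)\bigr)\,\d s+\int_0^t f_2(s)\,\d^\circ B^H_s=:F_1(t),
\end{equation*}
which is exactly of the form treated in Proposition \ref{prop_ito_stratonovich}, with the still integrable drift $\widehat f_1:=f_1-\mathbb{D}^H_{\cdot}f_2$. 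Applying that proposition to $F_1$ and $G$ then yields
\begin{equation*}
\d\bigl(F(t)G(t)\bigr)=F(t)g_2(t)\,\d W_t+G(t)f_2(t)\,\d^\circ B^H_t+\bigl[F(t)g_1(t)+G(t)\widehat f_1(t)\bigr]\,\d t.
\end{equation*}

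Next I would convert the Stratonovich integral $\int_0^{\cdot}G(s)f_2(s)\,\d^\circ B^H_s$ back into a Skorohod integral. By \reff{eq:def_Stato} applied to the process $s\mapsto G(s)f_2(s)$, this equals $\int_0^t G(s)f_2(s)\,\d B^H_s+\int_0^t\mathbb{D}^H_s(G(s)f_2(s))\,\d s$; and since for each fixed $s$ the operator $D^H_r$ obeys the Leibniz rule $D^H_r(G(s)f_2(s))=G(s)D^H_r f_2(s)+f_2(s)D^H_r G(s)$, integrating against $\varphi(s-r)\,\d r$ gives $\mathbb{D}^H_s(G(s)f_2(s))=f_2(s)\,\mathbb{D}^H_s G(s)+G(s)\,\mathbb{D}^H_s f_2(s)$. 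Substituting this, together with $\widehat f_1=f_1-\mathbb{D}^H_{\cdot}f_2$, into the identity above, the two occurrences of $G(t)\,\mathbb{D}^H_t f_2(t)\,\d t$ cancel and one arrives at \reff{eq:ito_WH}, i.e. $\d(F(t)G(t))=F(t)g_2(t)\,\d W_t+G(t)f_2(t)\,\d B^H_t+f_2(t)\,\mathbb{D}^H_t G(t)\,\d t$ (plus the drift terms $F(t)g_1(t)\,\d t$ and $G(t)f_1(t)\,\d t$ inherited from the finite-variation parts of $F$ and $G$).

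The algebra above is routine; the main obstacle will be the technical verification that Proposition \ref{def_stratonovich} may genuinely be applied to the product $s\mapsto G(s)f_2(s)$. For this one must check that this process is Malliavin differentiable and that $\E\bigl[\int_0^T\!\int_0^T|G(s)f_2(s)G(t)f_2(t)|\,\varphi(s-t)\,\d s\,\d t+\int_0^T\!\int_0^T|\mathbb{D}^H_s(G(t)f_2(t))|^2\,\d s\,\d t\bigr]<\infty$, combining the assumptions on $f_2$ with the representation $D^H_r G(t)={K^*_H}^{-1}D^W_r G(t)$ of Proposition \ref{prop_DhDw}, the explicit Malliavin derivative of the It\^o integral $\int_0^{\cdot}g_2\,\d W$, and Cauchy--Schwarz/H\"older estimates to handle the product; one also has to justify the diagonal Leibniz rule used above as an $L^2$-identity of traces of $\mathbb{D}^H_r(G(t)f_2(t))$ along $r=t$. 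Granting these points, the statement follows directly from Propositions \ref{def_stratonovich} and \ref{prop_ito_stratonovich}.
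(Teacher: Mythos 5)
Your argument is exactly the route the paper intends: the paper offers no written proof beyond the remark that the result follows by combining Propositions \ref{def_stratonovich} and \ref{prop_ito_stratonovich}, and your chain (convert the Skorohod integral in $F$ to a Stratonovich one, apply the Stratonovich product rule, convert back using the Leibniz rule for $\mathbb{D}^H$, with the $G(t)\mathbb{D}^H_t f_2(t)$ terms cancelling) is precisely that combination, carried out correctly together with the relevant integrability caveats. Your closing observation is also accurate: the drift contribution $[F(t)g_1(t)+G(t)f_1(t)]\,\d t$ should indeed appear in \reff{eq:ito_WH} (it is used later, e.g.\ in \reff{eq:dYtPt}), so its absence from the displayed statement is a typo in the corollary rather than a flaw in your derivation.
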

\subsection{Girsanov Transformation}
Let $\{\gamma(s), s\in[0,T]\}$ be a bounded function in $\mathcal{H}$. For any $\omega\in\Omega$, we define the following operators:
\begin{equation}
	\label{transT}
	\mathcal{T}_t(\omega)=\omega+\int^{t\wedge\cdot}_0K^*_H(\gamma I_{[0,t]})(s)\d s,
\end{equation}
and
\begin{equation}
\label{transA}
\mathcal{A}_t(\omega)=\omega-\int^{t\wedge\cdot}_0K^*_H(\gamma I_{[0,t]})(s)\d s, \, t\in[0,T].
\end{equation}
It is clear that $\mathcal{A}_t\mathcal{T}_t(\omega)=\mathcal{T}_t\mathcal{A}_t(\omega)=\omega$. Moreover, for any $F\in\mathcal{S}$, we have from the Girsanov theorem (we refer to \cite{Bu}),
\begin{equation}
\label{eq:Girsanov}
\mathbb{E}[F]=\E[F(\mathcal{T}_t)\varepsilon_t^{-1}(\mathcal{T}_t)]=\E[F(\mathcal{A}_t)\varepsilon_t],
\end{equation}
where 
\begin{equation*}
\begin{aligned}
\varepsilon_t&=\exp\left\{\int^t_0\gamma_s\d B^H_s-\frac12\int^t_0\left(K^*_H(\gamma I_{[0,t]})\right)^2(s)\d s\right\}\\
&=\exp\lt\{\int^t_0 K^*_H(\gamma I_{[0,t]})(s)\d W_s-\frac12\int^t_0\left(K^*_H(\gamma I_{[0,t]})\right)^2(s)\d s\rt\},
\end{aligned}
\end{equation*}
and hence
\begin{equation*}
\begin{aligned}
\varepsilon^{-1}_t(\T_t)&=\exp\left\{-\int^t_0\gamma_s\d B^H_s-\frac12\int^t_0\left(K^*_H(\gamma I_{[0,t]})\right)^2(s)\d s\right\}.
\end{aligned}
\end{equation*}

Following a similar argument in Lemma 2.4 in \cite{JL} , we verify that 
$$
\E\left[\sup_{t\in[0,T]}\varepsilon_t^p\right]< +\infty \quad \textrm{and} \quad \E\left[\sup_{t\in[0,T]}\varepsilon_t^p(\T_t)\right]< +\infty,\qquad \textrm{for\,\ all} \quad p\in\R.
$$ 

\subsection{Differentiability of Functions of Measures} 
Let $\mathcal{P}(\R^n)$ be the space of all probability measures on $(\R^n, \mathcal{B}(\R^n))$. We denote by $\mathcal{P}_p(\R^n)$ the subspace of  $\mathcal{P}(\R^n)$ of order $p$, which means
\[
\mathcal{P}_p(\R^n)=\{\mu\in\mathcal{P}(\R^n):\int_{\R^n} |x|^p\mu(\d x)<+\infty\}.
\]
On $\mathcal{P}_p(\R^{n})$, the Wasserstein metric of order $p$  is defined by
\[
\begin{aligned}
W_p(\mu,\nu)=\inf\bigg\{\lt(\int_{\R^{2n}} |x-y|^p\rho(\d x,\d y)\rt)^{\frac1p},& \ \rho\in\mathcal{P}_i(\R^{2n})\ \hbox{such that}\\ &\rho(\cdot\times\R^n)=\mu \ \hbox{and}\ \rho(\R^n\times\cdot)=\nu\bigg\}.
\end{aligned}
\]
In this paper, we will use Wasserstein metrics of order 1 and 2: $W_1$ and  $W_2$.
Notice that if $\xi$ and $\eta$ are two $p$-integrable random variables with laws $\PP_\xi$ and $\PP_\eta$, then we have $W_p(\PP_\xi,\PP_\eta)\le\lt(\E{|\xi-\eta|^p}\rt)^{\frac1p}$ since we can choose a special $\rho=\PP_{(\xi,\eta)}$ in the above definition. In this paper, the notion of differentiability for functions of measures we use is that introduced by P.~L.~Lions in his course at the Coll\`ege de France and summarized by Cardaliaguet \cite{Car}. We also refer to Carmona and Delarue \cite{CaDe}.

Notice that, as $(\Omega, \F, \PP)$ carries a fractional Brownian motion, it is rich enough in the sense that $\mathcal{P}_2(\R^n)=\{P_\xi, \xi\in L^2(\Omega,\F,\PP; \R^n)\}, n\ge 1.$

Given a function $\sigma: \mathcal{P}_2(\R)\to \R$, for any random variable $\xi\in L^2(\Omega,\F,\PP)$, we set $\tilde{\sigma}(\xi):=\sigma(\PP_{\xi})$.
\begin{definition}
The function $\sigma$ is said to be differentiable at $\mu\in \mathcal{P}_2(\R)$, if there exists a random variable $\tilde{\xi}\in L^2(\Omega,\F,\PP)$ with $\PP_{\tilde{\xi}}=\mu$ such that $\tilde{\sigma}: L^2(\Omega,\F,\PP)\to \R$ is Fr\'echet differentiable at $\tilde{\xi}$.
\end{definition}

For simplicity, we suppose that $\tilde{\sigma}:  L^2(\Omega,\F,\PP)\to \R$ is Fr\'echet differentiable. We denote its Fr\'echet derivative at $\tilde{\xi}$ by $D\tilde{\sigma}(\tilde{\xi})$. Notice that $D\tilde{\sigma}(\tilde{\xi}):L^2(\Omega,\F,\PP)\to \R$ is a continuous linear mapping; we write $D\tilde{\sigma}(\tilde{\xi})\in L(L^2(\Omega,\F,\PP),\R)$. Hence,
\[
\sigma(\PP_{\xi})-\sigma(\PP_{\tilde{\xi}})=\tilde{\sigma}(\xi)-\tilde{\sigma}(\tilde{\xi})=
\langle(D\tilde{\sigma})({\xi}),(\xi-\tilde{\xi})\rangle_{L^2}+o(|\xi-\tilde{\xi}|_{L^2}),\  \textrm{as}\ |\xi-\tilde{\xi}|_{L^2}\to 0.
\]

According to Cardaliaguet \cite{Car}, with the Riesz representation theorem, $D\tilde{\sigma}(\xi)\in $ $L(L^2(\Omega,\F,$ $\PP),\R)\equiv L^2(\Omega,\F,\PP)$, i.e., there exists a random variable $\theta\in L^2(\Omega,\F,\PP)$ such that $D\tilde{\sigma}(\xi)(\eta)=E[\theta\eta]$, for an $\eta\in L^2(\Omega,\F,\PP)$. Due to the by now well known result by P.-L.Lions, there is a Borel function $h_{\PP_\xi}:\R\to\R$, such that $\theta=h_{\PP_\xi}(\xi)$, $\PP$-a.s. 

We define the derivative of $\sigma$ with respect to the measure at $\PP_\xi$ by putting $\partial_\mu\sigma(\PP_\xi,x)=h_{\PP_\xi}(x)$. Notice that $\partial_\mu\sigma(\PP_\xi,x)$ is defined only $\PP_\xi(\d x)$-a.e. uniquely. Therefore,
\[
\sigma(\PP_{\tilde{\xi}})-\sigma(\PP_{\xi})=\E[
\partial_\mu\sigma(\PP_\xi,\xi)(\tilde{\xi}-\xi)]+o(|\tilde{\xi}-\xi|_{L^2}),\  \textrm{as}\ |\xi-\tilde{\xi}|_{L^2}\to 0. 
\]

For example, if, for $\xi\in L^2(\Omega,\F,\PP)$ and $\sigma,\varphi\in C^1_b(\mathbb{R})$, we consider $\sigma(\PP_\xi)=\sigma(\E[\varphi(\xi)])$, we have $\tilde\sigma(\xi)=\sigma(\E[\varphi(\xi)])$, and a straight forward computation shows 
\[
\begin{aligned}
D\tilde{\sigma}(\xi)(\eta)
=\E[\sigma^\prime(\E[\varphi(\xi)])\varphi^\prime(\xi)\eta],\, \textrm{for all}\, \eta\in L^2(\Omega,\F,\PP),
\end{aligned}
\]
i.e.,  $\partial_\mu\sigma(\PP_\xi,x)=\sigma^\prime(\E[\varphi(\xi)])\varphi^\prime(x)$.

As concerns the well-definedness of the derivative $\partial_\mu\sigma (\mathbb P_\xi, x):=h_{\mathbb P_\xi} (x)$, i.e.,  the dependence of $h_{\mathbb P_\xi}$ on $\xi$ only through $\PP_\xi$, it can be shown by a rather simple argument: Let $\xi_1,\xi_2\in L^2(\Omega,{\cal F},\PP)$ be such that $\widetilde{\sigma}$  is differentiable at both $\xi_1$ and $\xi_2$ and  $\PP_\xi=\PP_{\xi_1}=\PP_{\xi_2}$. 
Then, for any bounded Borel function $\phi:\R\rightarrow \R$, for $i=1,2,$
$$\E[h_{\PP_{\xi_i}}(\xi_i)\phi(\xi_i)]=D\tilde{\sigma}(\xi_i)(\phi(\xi_i))=\partial_\varepsilon\widetilde{\sigma}(\xi_i+\varepsilon\phi(\xi_i))_{|\varepsilon=0}=\partial_\varepsilon\sigma(P_{\xi_i+\varepsilon\phi(\xi_i)})_{|\varepsilon=0}.$$
But as $\PP_{\xi_1}=\PP_{\xi_2}$, also $\PP_{\xi_1+\varepsilon\phi(\xi_1)}=\PP_{\xi_2+\varepsilon\phi(\xi_2)},$ for all $\varepsilon>0$. This implies that, as $h_{\PP_{\xi_i}}$ is deterministic, 
$$\E\lt[h_{\PP_{\xi_1}}(\xi_1)\phi(\xi_1)\rt]=\E\lt[h_{\PP_{\xi_2}}(\xi_2)\phi(\xi_2)\rt]=\E\lt[h_{\PP_{\xi_2}}(\xi_1)\phi(\xi_1)\rt],$$ 
for all bounded Borel function $\phi$. Finally, choosing $\phi(x)=$sign$(h_{\xi_1}(x)-h_{\xi_2}(x)),\ x\in \R$, we get 
$$\E[|h_{\xi_1}(\xi_1)-h_{\xi_2}(\xi_1)|]=\E[(h_{\xi_1}(\xi_1)-h_{\xi_2}(\xi_1))\phi(\xi_1)]=0,$$ 
i.e., $h_{\xi_1}(x)=h_{\xi_2}(x),\, \PP_{\xi_1}(=\PP_{\xi_2})$-a.s. 

\smallskip
In the last part of this paper, we need the joint convexity of a function on $(\R^n\times \mathcal{P}_2(\R^d))$. A differentiable function $g$ defined on  $(\R^n\times \mathcal{P}_2(\R^d))$ is convex, if for every $(x,\mu)$ and $(x^\pr,\mu^\pr)\in  (\R^n\times \mathcal{P}_2(\R^d))$, we have 
\begin{equation}
\label{convexity}
g(x^\pr,\mu^\pr)-g(x,\mu)-\langle\partial_x g(x,\mu), (x^\pr-x)\rangle-\wt{\E}\lt[\langle\partial_\mu g(x,\mu)(\wt{X}), \wt{X}^\pr-\wt{X}\rangle\rt]\ge 0, 
\end{equation}
where $\wt{X}, \wt{X}^\pr\in L^2(\Omega,\F,\PP;\R^d)$ with $\PP_{\wt{X}}=\mu$ and $\PP_{\wt{X}^\pr}=\mu^\pr$, and $\langle\cdot,\cdot\rangle$ stands for the scalar product in $\R^m$, $m\in \mathbb{N}$.

Moreover,  a differentiable function $g$ defined on  $(\R^n\times \mathcal{P}_2(\R^d))$ is strictly convex, if there exists $\lambda>0$, for every $(x,\mu)$ and $(x^\pr,\mu^\pr)\in  (\R^n\times \mathcal{P}_2(\R^d))$, we have 
\begin{equation}
\label{strictconvexity}
\lambda (|x-x^\pr|^2+\E\lt[|X-X^\pr|^2\rt])\le \langle\partial_x g(x,\mu), (x^\pr-x)\rangle+{\E}\lt[\langle\partial_\mu g(x,\mu)({X}), {X}^\pr-{X}\rangle\rt], 
\end{equation}
where ${X}, {X^\pr}\in L^2(\Omega,\F,\PP;\R^d)$ with $\PP_{{X}}=\mu$ and $\PP_{{X}^\pr}=\mu^\pr$.
\section{Mean-field SDE driven by fractional Brownian motion}
\label{section sde}
In this section, we will study a class of semi-linear stochastic differential equations driven by a fractional Brownian motion. In the following sections, the constant $C$ can vary from line to line. 

Given an arbitrary square integrable process $\Theta=(\Theta_s)$ with values in $\R^m$, $m\ge 1$, let us consider the following equation:
\begin{equation}
\label{eq:semilinear}
X_t=\xi+\int^t_0\left(\gamma_s X_s+\sigma(s,\mathbb{P}_{(X_s,\Theta_s)}\right)\d B_s^H +\int^t_0 b(s,\mathbb{P}_{(X_s,\Theta_s)},X_s)\d s,
\end{equation}
where $\xi\in L^2(\Omega,\mathcal{F}_0,\PP;\R)$ and the coefficients $\sigma: [0,T]\times\mathcal{P}_2(\R \times U)\to \R$ and $b:\Omega\times [0,T]\times\mathcal{P}_2(\R \times U)\times\R\to\R$  satisfy the following conditions: 

\textbf{(H1)} For any $s\in[0,T]$, $x, x^\prime\in\R$, $\eta, \eta^\prime\in L^2(\Omega,\mathcal{F}, \PP;\R)$ and $\Theta\in L^2(\Omega,\mathcal{F}, \PP;\R^m)$, there exists a constant $C>0$ such that
$$
|\sigma(s,\PP_{(\eta,\Theta)})|\le C,
$$ 
$$
|b(s,\PP_{(\eta,\Theta)},x)|\le C\lt(1+W_1(\PP_{(\eta,\Theta)},\PP_{(0,\Theta)})+|x|\rt),
$$
$$ 
\lt|\sigma(s, \PP_{(\eta,\Theta)})-\sigma(s,\PP_{(\eta^\prime,\Theta)})\rt|\le C W_1(\PP_{(\eta,\Theta)},\PP_{(\eta^\pr,\Theta)}), 
$$
$$
|b(s,\PP_{(\eta,\Theta)},x)-b(s,\PP_{(\eta^\prime,\Theta)},x^\prime)|\le C \lt(W_1(\PP_{(\eta,\Theta)},\PP_{(\eta^\pr,\Theta)})+|x-x^\prime|\rt).
$$
\begin{remark}
It is easy to deduce from \textbf{(H1)} the following conditions:
$$
|b(s,\PP_{(\eta,\Theta)},x)|\le C\lt(1+\E\lt[|\eta|\rt]
+|x|\rt),
$$
$$ 
\lt|\sigma(s, \PP_{(\eta,\Theta)})-\sigma(s,\PP_{(\eta^\prime,\Theta)})\rt|\le C \E\lt[|\eta-\eta^\prime|\rt], 
$$
$$
|b(s,\PP_{(\eta,\Theta)},x)-b(s,\PP_{(\eta^\prime,\Theta)},x^\prime)|\le C \lt(\E\lt[|\eta-\eta^\prime|\rt]+|x-x^\prime|\rt).
$$
\end{remark}

We denote by $L^{2,*}([0,T];\R)$ the Banach space of $\mathbb{F}$-adapted process $\{\varphi(t),t\in[0,T]\}$ such that 
$$
\sup_{t\in[0,T]}\E\lt[|\varphi(t)|^2\ep^{-1}_t\rt]<+\infty.
$$ 
\begin{definition}
A solution of equation \eqref{eq:semilinear} is a stochastic process $X=(X_t)_{t\ge0}\in L^{2,*}([0,T];\R)$ such that  $XI_{[0,t]}\in\mathrm{Dom}(\delta_H), t\in[0,T]$ and equation \eqref{eq:semilinear} holds true $\PP$-a.s.
\end{definition}
\begin{remark}
	Note that for $X\in L^{2,*}([0,T];\R)$, $\big(\sigma(s,\PP_{(X,\Theta)})\big)_{s\in[0,T]}\in L^\infty([0,T])$ is a bounded and hence, square integrable deterministic function, which implies that $\int^t_0\sigma(s,\PP_{(X_s,\Theta_s)})\d B^H_s$ is well defined.
\end{remark}

To solve the equation \eqref{eq:semilinear}, we first transform it to another one. Indeed, we have the following statement.
\begin{theorem}
	\label{thm_semi-girsemi}
	Assume $X\in L^{2,*}([0,T];\R)$. Then $X$ is a solution of \eqref{eq:semilinear} if and only if it solves the following equation:
	\begin{equation}
	\label{eq:Gir-semilinear}
	\begin{aligned}
	&X_t(\mathcal{T}_t)\varepsilon_t^{-1}(\mathcal{T}_t)\\
	=&\xi+\int^t_0\sigma(s,\mathbb{P}_{(X_s,\Theta_s)})\varepsilon_s^{-1}(\mathcal{T}_s)\d B_s^H +\int^t_0 b(s,\mathcal{T}_s,\mathbb{P}_{(X_s,\Theta_s)},X_s(\mathcal{T}_s))\varepsilon_s^{-1}(\mathcal{T}_s)\d s.
	\end{aligned}
		\end{equation}
\end{theorem}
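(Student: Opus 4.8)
The plan is to transport \reff{eq:semilinear} forward along the shift $\T_t$ to obtain \reff{eq:Gir-semilinear}, and to transport \reff{eq:Gir-semilinear} back along the inverse shift $\mathcal{A}_t$ to recover \reff{eq:semilinear}. Since $\mathcal{A}_t\T_t=\T_t\mathcal{A}_t=\mathrm{id}$, since $\xi$ is $\F_0$-measurable so that $\xi(\T_t)=\xi(\mathcal{A}_t)=\xi$, and since the uniform moment bounds on $\ep_t$ and $\ep_t(\T_t)$ guarantee that the integrability and the $\mathrm{Dom}(\delta_H)$-membership appearing in the definition of a solution are preserved under the maps $X_\cdot\mapsto X_\cdot(\T_\cdot)\ep^{-1}_\cdot(\T_\cdot)$ and its inverse, it suffices to prove one implication. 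So I would assume $X\in L^{2,*}([0,T];\R)$ solves \reff{eq:semilinear} and show that $Y_t:=X_t(\T_t)\ep^{-1}_t(\T_t)$ solves \reff{eq:Gir-semilinear}; the converse is then obtained by the symmetric computation with $\mathcal{A}_t$, $\ep_t$ and $-\gamma$ in place of $\T_t$, $\ep^{-1}_t(\T_t)$ and $\gamma$.

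The heart of the matter is the behaviour of the two stochastic integrals in \reff{eq:semilinear} under $\T_t$. Since $\sigma(\cdot,\PP_{(X_\cdot,\Theta_\cdot)})$ is a bounded deterministic function, the term $\int_0^t\sigma\,\d B^H_s$ is innocuous; the delicate term is the linear one $\int_0^t\gamma_sX_s\,\d B^H_s$, and the point is that its transform recombines with the transform of the density $\ep^{-1}_t(\T_t)$ in such a way that the factor $\gamma_sX_s$ disappears. Heuristically, I would first rewrite \reff{eq:semilinear} in Stratonovich form via \reff{eq:def_Stato} — which, since $\mathbb{D}^H$ annihilates the deterministic part, reads $\d X_t=\bigl(\gamma_tX_t+\sigma(t,\PP_{(X_t,\Theta_t)})\bigr)\d^\circ B^H_t+\bigl(b(t,\PP_{(X_t,\Theta_t)},X_t)-\gamma_t\mathbb{D}^H_tX_t\bigr)\d t$ — and observe that, as $H>1/2$, the integral against $B^H$ is a pathwise Young integral, under which $\T_t$ merely shifts $B^H$ by a deterministic path whose increment at time $t$ is $\int_0^t\gamma_u\varphi(t-u)\,\d u=\mathbb{D}^H_t\!\bigl(\int_0^t\gamma_s\,\d B^H_s\bigr)$. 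On the other hand, from the explicit exponential expression for $\ep^{-1}_t(\T_t)$ recalled before the statement, together with the fact that $\gamma$ is deterministic and the Stratonovich chain rule (Propositions \ref{def_stratonovich} and \ref{prop_ito_stratonovich}), $t\mapsto\ep^{-1}_t(\T_t)$ solves an explicit linear Stratonovich equation. Combining these via the Stratonovich product rule in the spirit of Proposition \ref{prop_ito_stratonovich} and Corollary \ref{corol}, the contributions carrying $\gamma_tX_t$ cancel, and converting the surviving Stratonovich integral back into a Skorohod integral with \reff{eq:def_Stato} leaves exactly $\d Y_t=\sigma(t,\PP_{(X_t,\Theta_t)})\ep^{-1}_t(\T_t)\,\d B^H_t+b\bigl(t,\T_t,\PP_{(X_t,\Theta_t)},X_t(\T_t)\bigr)\ep^{-1}_t(\T_t)\,\d t$ with $Y_0=\xi$, which integrates to \reff{eq:Gir-semilinear}. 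One finally checks, using \reff{eq:skorohodsquare} and the moment bounds on $\ep$, that $\sigma(\cdot,\PP_{(X_\cdot,\Theta_\cdot)})\ep^{-1}_\cdot(\T_\cdot)I_{[0,t]}\in\mathrm{Dom}(\delta_H)$ and $Y\in L^{2,*}([0,T];\R)$.

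I expect the main obstacle to be giving the previous paragraph a rigorous meaning: $X$ is defined only through a Skorohod (anticipating) integral, which is not a pathwise object, so one cannot literally substitute the shifted path into the dynamics; moreover $\T_t$ depends on $t$ through $K^*_H(\gamma I_{[0,t]})$, so that $X_s(\T_s)$ and $X_s(\T_t)$ differ when $s<t$, whereas it is $\T_s$ that must appear under the integral signs in \reff{eq:Gir-semilinear}. The robust way to handle this is to argue by duality at each fixed $t$: test \reff{eq:Gir-semilinear} against an arbitrary $G\in\mathbb{D}_{H,1,2}$, use the Girsanov identity \reff{eq:Girsanov} to transfer the shift from the integrands onto $G$, use that the Malliavin derivative commutes with the deterministic shift, namely $\bigl(D^H_rG\bigr)(\mathcal{A}_t)=D^H_r\bigl(G(\mathcal{A}_t)\bigr)$ and hence $\bigl(\mathbb{D}^H_sG\bigr)(\mathcal{A}_t)=\mathbb{D}^H_s\bigl(G(\mathcal{A}_t)\bigr)$, and then, via the duality relation \reff{eq_duality}, identify the resulting identity with the one obtained by testing \reff{eq:semilinear} against the corresponding transformed function; the $t$-versus-$s$ bookkeeping is then settled from the $\mathbb{F}$-adaptedness of $X$, $\Theta$ and $b$ and the consistency of the family $\{\T_s\}_{s\le t}$. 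An alternative is to establish the required transformation rule for the Skorohod integral first on polynomial integrands, where every computation is explicit, and then to pass to the general case by the continuity estimates for $\delta_H$.
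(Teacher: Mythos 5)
Your rigorous fallback --- fix $t$, test against an arbitrary smooth functional, use the Girsanov identity \reff{eq:Girsanov} to move the shifts off the integrands and onto the test functional, and conclude through the duality characterisation \reff{eq_duality} of $\delta_H$ --- is exactly the route the paper takes, including the reduction to a single implication with the converse obtained by the symmetric computation. You have also correctly isolated the real difficulties: the integrals are Skorohod integrals, so nothing can be substituted pathwise, and under the integral sign it is $\mathcal{T}_s$ (varying with $s$), not $\mathcal{T}_t$, that must appear.

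However, as written the plan stops short of the one step that constitutes the proof: it never shows how the linear term $\int_0^t\gamma_sX_s\,\d B^H_s$ gets absorbed. Your only argument for that cancellation is the Stratonovich product-rule heuristic, and it cannot be upgraded here: $X$ is merely assumed to lie in $L^{2,*}([0,T];\R)$, it is not known to be Malliavin differentiable, so Propositions \ref{def_stratonovich}, \ref{prop_ito_stratonovich} and Corollary \ref{corol} do not apply to the integrand $\gamma X$, and the Skorohod--Stratonovich--Young conversions you invoke are unavailable at this level of generality. Nor is the $t$-versus-$s$ bookkeeping settled by adaptedness of $X,\Theta,b$ or by ``consistency'' of $\{\mathcal{T}_s\}$: in the duality computation the mechanism is the identity $\frac{\d}{\d s}F(\mathcal{A}_s)=-\gamma_s\mathbb{D}^H_sF(\mathcal{A}_s)$, which yields $F(\mathcal{A}_t)=F(\mathcal{A}_s)-\int_s^t\gamma_r\mathbb{D}^H_rF(\mathcal{A}_r)\d r$ and hence $\mathbb{D}^H_sF(\mathcal{A}_t)=\mathbb{D}^H_sF(\mathcal{A}_s)-\int_s^t\gamma_r\mathbb{D}^H_s\big(\mathbb{D}^H_rF(\mathcal{A}_r)\big)\d r$; after Fubini and a \emph{second} application of \reff{eq_duality}, the double-derivative term becomes $\E\big[\int_0^t\gamma_s\mathbb{D}^H_sF(\mathcal{A}_s)\int_0^s(\gamma_rX_r+\sigma(r,\PP_{(X_r,\Theta_r)}))\d B^H_r\,\d s\big]$, so that the factor multiplying $\gamma_s\mathbb{D}^H_sF(\mathcal{A}_s)$ is exactly $X_s-\xi-\int_0^s(\gamma_rX_r+\sigma(r,\PP_{(X_r,\Theta_r)}))\d B^H_r-\int_0^s b(r,\PP_{(X_r,\Theta_r)},X_r)\d r$, which vanishes precisely because $X$ solves \reff{eq:semilinear}; the $\mathrm{Dom}(\delta_H)$ membership of $\sigma(\cdot,\PP_{(X_\cdot,\Theta_\cdot)})\ep_\cdot^{-1}(\mathcal{T}_\cdot)I_{[0,t]}$ then comes for free from the resulting duality identity. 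Without this (or an equivalent) computation, the ``identification'' you announce between the tested forms of \reff{eq:semilinear} and \reff{eq:Gir-semilinear} does not follow, so the proposal has a genuine gap at its central step, even though the framework you chose is the one that works.
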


\begin{remark}
We note that for any $X\in L^{2,*}([0,T];\R)$, the expression $$
\int^t_0\sigma(s,\PP_{(X_s,\Theta_s)})\ep^{-1}_s(\T_s)\d B^H_s, 0\le t\le T,
$$ 
is well defined. Indeed, $\sigma(s,\PP_{(X_s,\Theta_s)})$ is a deterministic bounded function, and we have the following statement:
\end{remark}
\begin{lemma}
	For all $\Theta\in L^\infty([0,T])$, the process $\big(\Theta_s\ep_s^{-1}(\T_s)\big)_{s\in[0,T]}\in\mathrm{Dom}(\delta_H)$.
\end{lemma}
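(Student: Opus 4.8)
The plan is to verify that the process $f(s):=\Theta(s)\,\ep_s^{-1}(\T_s)$, $s\in[0,T]$, fulfils the hypotheses of Proposition \ref{def_stratonovich}; its conclusion includes the existence of the Skorohod integral $\int_0^T f(s)\,\d B_s^H$, which is exactly the assertion $f\in\mathrm{Dom}(\delta_H)$. (Applying the same argument to $\Theta I_{[0,t]}\in L^\infty([0,T])$ gives moreover $fI_{[0,t]}\in\mathrm{Dom}(\delta_H)$ for every $t\in[0,T]$.)

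First I would record the structure of the density. Since $\int_0^s\gamma_r\,\d B_r^H=B^H(\gamma I_{[0,s]})$ and $\int_0^s\big(K^*_H(\gamma I_{[0,s]})\big)^2(r)\,\d r=\|\gamma I_{[0,s]}\|_H^2$, the Girsanov density rewrites as
\begin{equation*}
\ep_s^{-1}(\T_s)=\exp\Big\{-B^H(\gamma I_{[0,s]})-\tfrac12\|\gamma I_{[0,s]}\|_H^2\Big\},
\end{equation*}
a smooth functional of the single Gaussian variable $B^H(\gamma I_{[0,s]})$. Approximating $x\mapsto e^{-x}$ by polynomials and using closability of the Malliavin operators together with the bound $\E\big[\sup_t\ep_t^p(\T_t)\big]<\infty$, valid for all $p\in\R$, one obtains $f(s)\in\mathbb{D}_{H,1,p}\cap\mathbb{D}^W_{1,p}$ for every $p$, with
\begin{align*}
D^H_r f(s)&=-\Theta(s)\,\ep_s^{-1}(\T_s)\,\gamma(r)\,I_{[0,s]}(r),\\
\mathbb{D}^H_r f(s)&=-\Theta(s)\,\ep_s^{-1}(\T_s)\,\big(K_HK^*_H(\gamma I_{[0,s]})\big)(r)\\
&=-\Theta(s)\,\ep_s^{-1}(\T_s)\int_0^s\varphi(r-v)\gamma(v)\,\d v,
\end{align*}
where the second line uses the identity $\mathbb{D}^H_r F=K_HK^*_H D^H_r F$ and the fact that $K_HK^*_H$ acts on $L^2([0,T])$ as convolution against $\varphi$. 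Because $|\Theta|\le\|\Theta\|_\infty$ a.e., this yields $\E\big[\sup_s|f(s)|^q\big]\le\|\Theta\|_\infty^q\,\E\big[\sup_s\ep_s^{-q}(\T_s)\big]<\infty$ for all $q>0$; and, $\gamma$ being bounded and $\varphi(r)=H(2H-1)|r|^{2H-2}$ integrable on $[0,T]$ (here $2H-2>-1$), it gives the uniform estimate $|\mathbb{D}^H_r f(s)|\le C\,\ep_s^{-1}(\T_s)$ with $C:=\|\Theta\|_\infty\|\gamma\|_\infty\sup_{r\in[0,T]}\int_0^T\varphi(r-v)\,\d v<\infty$.

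With these facts the two integrability conditions of Proposition \ref{def_stratonovich} are immediate. For the first,
\begin{equation*}
\E\Big[\int_0^T\int_0^T|f(s)f(t)|\,\varphi(s-t)\,\d s\,\d t\Big]\le\E\big[\sup_s|f(s)|^2\big]\int_0^T\int_0^T\varphi(s-t)\,\d s\,\d t<\infty;
\end{equation*}
for the second, $\E\big[\int_0^T\int_0^T|\mathbb{D}^H_r f(s)|^2\,\d r\,\d s\big]\le C^2T^2\,\E\big[\sup_s\ep_s^{-2}(\T_s)\big]<\infty$. Proposition \ref{def_stratonovich} then applies and yields $f\in\mathrm{Dom}(\delta_H)$.

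I expect the only genuinely delicate point to be the Malliavin-calculus preliminary, not any estimate: one must justify that the non-polynomial exponential functional $\ep_s^{-1}(\T_s)$ lies in $\mathbb{D}_{H,1,p}\cap\mathbb{D}^W_{1,p}$ with the derivative displayed above. This is a routine truncation/closability argument, but it is the one step where the uniform $L^p$-control of $\ep^{-1}(\T)$ coming from the Girsanov estimate is essential; and if one computes through $D^W$ rather than $D^H$, one additionally needs the relations $\mathbb{D}^H_\cdot F=K_HK^*_HD^H_\cdot F$ and $D^H_\cdot F={K^*_H}^{-1}D^W_\cdot F$ of Proposition \ref{prop_DhDw}, hence the observation that $D^W_r f(s)=-\Theta(s)\ep_s^{-1}(\T_s)K^*_H(\gamma I_{[0,s]})(r)$ lies in the range of $K^*_H$. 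Once this is in place, boundedness of $\Theta$ and of $\gamma$, integrability of $\varphi$ on $[0,T]$ — which is precisely where $H>1/2$ enters — and the moment bound $\E[\sup_t\ep_t^p(\T_t)]<\infty$ finish the argument. The same conclusion can alternatively be reached by checking the duality of Definition \ref{prop_duality} directly, or via the transfer identity $\delta_H(f)=\delta^W(K^*_Hf)$ after verifying square integrability of $K^*_Hf$ and of its Malliavin derivative.
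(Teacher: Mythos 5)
Your argument is correct, but note that the paper itself states this lemma without proof, so there is no written proof to match; the closest indication of the authors' intended route is the way they later estimate such integrals (the terms $I_1$, $I_2$ in the proof of Theorem \ref{thm-gir-unique}), namely the transfer to the underlying Brownian motion: one computes $K_H^*\big(\Theta_\cdot\ep_\cdot^{-1}(\T_\cdot)I_{[0,t]}(\cdot)\big)$ explicitly, checks that it and its derivative $D^W$ are square integrable (so that $K_H^*f$ lies in the classical space $\mathbb{L}^{1,2}\subset\mathrm{Dom}(\delta^W)$), and concludes $f\in\mathrm{Dom}(\delta_H)$ with $\delta_H(f)=\delta^W(K_H^*f)$, consistently with the isometry \reff{eq:skorohodsquare}. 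You instead verify the hypotheses of Proposition \ref{def_stratonovich}: your identification $\ep_s^{-1}(\T_s)=\exp\{-B^H(\gamma I_{[0,s]})-\frac12\|\gamma I_{[0,s]}\|_H^2\}$, the derivative formulas $D^H_r f(s)=-\Theta(s)\ep_s^{-1}(\T_s)\gamma(r)I_{[0,s]}(r)$ and $\mathbb{D}^H_r f(s)=-\Theta(s)\ep_s^{-1}(\T_s)\int_0^s\varphi(r-v)\gamma(v)\,\d v$, and the two integrability checks (using boundedness of $\Theta$ and $\gamma$, integrability of $\varphi$ for $H>1/2$, and $\E[\sup_t\ep_t^p(\T_t)]<\infty$ for all $p\in\R$) are all sound, and the closability argument for the exponential functional is routine as you say. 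The only interpretive point is that you extract $f\in\mathrm{Dom}(\delta_H)$ from the conclusion of Proposition \ref{def_stratonovich}, whose statement asserts the Stratonovich--Skorohod relation \reff{eq:def_Stato} and hence implicitly the existence of $\delta_H(f)$ in the sense of Definition \ref{prop_duality}; within the paper's framework this reading is legitimate, though the direct duality/transfer argument you sketch at the end is the more self-contained option and is the one that meshes with the quantitative estimates the paper actually needs afterwards. What your route buys is brevity and no kernel computations; what the transfer route buys is the explicit $L^2$ bound on $\delta_H(f)$ that the existence-and-uniqueness proof uses anyway.
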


\begin{proof} (of Theorem \ref{thm_semi-girsemi}).
	Suppose $\{X_t, t\in[0,T]\}\in L^{2,*}([0,T];\R)$ is a solution of equation (\ref{eq:semilinear}), and that, in particular $\gamma X I_{[0,t]}\in \mathrm{Dom} (\delta_H)$, $t\in[0,T]$. Then, for any $F\in\mathcal{S}$, we have
	\begin{equation*}
	\begin{aligned}
	&\E\left[FX_t(\mathcal{T}_t)\varepsilon_t^{-1}(\mathcal{T}_t)-F\xi\right]=\E[F(\mathcal{A}_t)X_t-F\xi]\\
	=&\E\lt[F(\mathcal{A}_t)\xi-F\xi\rt]+\E\lt[F(\mathcal{A}_t)\int^t_0(\gamma_s X_s+\sigma(s,\mathbb{P}_{(X_s,\Theta_s)}))\d B_s^H\rt]\\
	&+\E\lt[F(\mathcal{A}_t)\int^t_0b(s,\mathbb{P}_{(X_s,\Theta_s)},X_s)\d s\rt]\\
	=&\E\lt[\xi\int^t_0\frac{\d F(\mathcal{A}_s)}{\d s}\d s\rt]+\E\lt[F(\mathcal{A}_t)\int^t_0(\gamma_s X_s+\sigma(s,\mathbb{P}_{(X_s,\Theta_s)}))\d B_s^H\rt]\\
	&+\E\lt[F(\mathcal{A}_t)\int^t_0b(s,\mathbb{P}_{(X_s,\Theta_s)},X_s)\d s\rt].\\
	\end{aligned}
	\end{equation*}
	We remark that $\frac{\d F(\mathcal{A}_s)}{\d s}=-\gamma_sK_HK_H^* D^H_sF(\mathcal{A}_s)=-\gamma_s\mathbb{D}_sF(\mathcal{A}_s)$. Thus, from Proposition \ref{prop_duality} we have
	\begin{equation*}
	\begin{aligned}
	&\E\left[FX_t(\mathcal{T}_t)\varepsilon_t^{-1}(\mathcal{T}_t)-F\xi\right]\\=&-\E\lt[\xi\int^t_0\gamma_s\mathbb{D}^H_sF(\mathcal{A}_s)\d s\rt]+\E\lt[\int^t_0(\gamma_sX_s+\sigma(s,\mathbb{P}_{(X_s,\Theta_s)}))\mathbb{D}^H_sF(\mathcal{A}_t)\d s\rt]\\
	&+\E\lt[\int^t_0b(s,\mathbb{P}_{(X_s,\Theta_s)},X_s)F(\mathcal{A}_t)\d s\rt].
	\end{aligned}
	\end{equation*}
	Using again that $F(\mathcal{A}_t)=F(\mathcal{A}_s)-\int^t_s\gamma_r\mathbb{D}^H_r(F(\mathcal{A}_r))\d r$, we see that 
	$$
	\mathbb{D}^H_s F(\mathcal{A}_t)=\mathbb{D}^H_sF(\mathcal{A}_s)-\int^t_s\gamma_r\mathbb{D}^H_s(\mathbb{D}^H_r(F(\mathcal{A}_r)))\d r. 
	$$
	Consequently,
	\begin{equation*}
	\begin{aligned}
	&\E\left[FX_t(\mathcal{T}_t)\varepsilon_t^{-1}(\mathcal{T}_t)-F\xi\right]\\
	=&-\E\lt[\xi\int^t_0\gamma_s\mathbb{D}^H_sF(\mathcal{A}_s)\d s\rt]+\E\lt[\int^t_0(\gamma_sX_s+\sigma(s,\mathbb{P}_{(X_s,\Theta_s)}))\mathbb{D}^H_sF(\mathcal{A}_s)\d s\rt]\\
	&-\E\lt[\int^t_0\int_s^t\gamma_r\mathbb{D}^H_s(\mathbb{D}^H_r(F(\mathcal{A}_r)))(\gamma_sX_s+\sigma(s,\mathbb{P}_{(X_s,\Theta_s)}))\d r\d s\rt]\\
	&+\E\lt[\int^t_0b(s,\mathbb{P}_{(X_s,\Theta_s)},X_s)F(\mathcal{A}_s)\d s\rt]\\
	&-\E\lt[\int^t_0\int_s^tb(s,\mathbb{P}_{(X_s,\Theta_s)},X_s)\gamma_r\mathbb{D}^H_rF(\mathcal{A}_r)\d r\d s\rt],
	\end{aligned}
	\end{equation*}
	and the Fubini theorem then yields
	\begin{equation*}
	\begin{aligned}
	&\E\left[FX_t(\mathcal{T}_t)\varepsilon_t^{-1}(\mathcal{T}_t)-F\xi\right]\\=&-\E\lt[\xi\int^t_0\gamma_s\mathbb{D}^H_sF(\mathcal{A}_s)\d s\rt]+\E\lt[\int^t_0(\gamma_sX_s+\sigma(s,\mathbb{P}_{(X_s,\Theta_s)}))\mathbb{D}^H_sF(\mathcal{A}_s)\d s\rt]\\
	&-\E\lt[\int^t_0\int_0^r\gamma_r\mathbb{D}^H_s(\mathbb{D}^H_r(F(\mathcal{A}_r)))(\gamma_sX_s+\sigma(s,\mathbb{P}_{(X_s,\Theta_s)}))\d s\d r\rt]\\
	&+\E\lt[\int^t_0b(s,\mathbb{P}_{(X_s,\Theta_s)},X_s)F(\mathcal{A}_s)\d s\rt]\\
	&-\E\lt[\int^t_0\int_0^rb(s,\mathbb{P}_{(X_s,\Theta_s)},X_s)\gamma_r\mathbb{D}^H_rF(\mathcal{A}_r)\d s\d r\rt].\\
	\end{aligned}
	\end{equation*}
Applying Proposition \ref{prop_duality} again, combined with the Fubini theorem, we have
	\begin{equation*}
	\begin{aligned}
	&\E\lt[\int^t_0\int_0^r\gamma_r\mathbb{D}^H_s(\mathbb{D}^H_r(F(\mathcal{A}_r)))(\gamma_sX_s+\sigma(s,\mathbb{P}_{(X_s,\Theta_s)}))\d s\d r\rt]\\
	=&\E\lt[\int^t_0\gamma_s\mathbb{D}^H_s(F(\mathcal{A}_s))\int^s_0(\gamma_rX_r+\sigma(r,\mathbb{P}_{(X_r,\Theta_r)}))\d B^H_r\d s\rt].
	\end{aligned}
	\end{equation*}
 Hence,
	\begin{equation*}
	\begin{aligned}
	&\E\left[FX_t(\mathcal{T}_t)\varepsilon_t^{-1}(\mathcal{T}_t)-F\xi\right]\\
	=&-\E\lt[\xi\int^t_0\gamma_s\mathbb{D}^H_sF(\mathcal{A}_s)\d s\rt]+\E\lt[\int^t_0(\gamma_sX_s+\sigma(s,\mathbb{P}_{(X_s,\Theta_s)}))\mathbb{D}^H_sF(\mathcal{A}_s)\d s\rt]\\
	&-\E\lt[\int^t_0\gamma_s\mathbb{D}^H_s(F(\mathcal{A}_s))\int^s_0(\gamma_rX_r+\sigma(r,\mathbb{P}_{(X_r,\Theta_r)}))\d B^H_r\d s\rt]\\
	&+\E\lt[\int^t_0(b(s,\mathbb{P}_{(X_s,\Theta_s)},X_s))F(\mathcal{A}_s)\d s\rt]\\
	&-\E\lt[\int^t_0\int_0^sb(r,\mathbb{P}_{(X_r,\Theta_r)},X_r)\gamma_s\mathbb{D}^H_sF(\mathcal{A}_s)\d r\d s\rt].
	\end{aligned}
	\end{equation*}
Therefore, 
    \begin{equation*}
	\begin{aligned}
	&\E\left[FX_t(\mathcal{T}_t)\varepsilon_t^{-1}(\mathcal{T}_t)-F\xi\right]\\
	=&\E\bigg[\int^t_0\gamma_s\mathbb{D}^H_sF(\mathcal{A}_s)\bigg(-\xi+X_s-\int^s_0(\gamma_rX_r+\sigma(r,\mathbb{P}_{(X_r,\Theta_r)}))\d B^H_r\\
	&\qquad-\int_0^s(b(r,\mathbb{P}_{(X_r,\Theta_r)},X_r))\d r\bigg)\d s\bigg]\\
	&+\E\lt[\int^t_0\sigma(s,\mathbb{P}_{(X_s,\Theta_s)})\mathbb{D}^H_sF(\mathcal{A}_s)\d s\rt]+\E\lt[\int^t_0b(s,\mathbb{P}_{(X_s,\Theta_s)},X_s)F(\mathcal{A}_s)\d s\rt]\\
	=&\E\lt[\int^t_0\sigma(s,\mathbb{P}_{(X_s,\Theta_s)})\mathbb{D}^H_sF(\mathcal{A}_s)\d s\rt]+\E\lt[\int^t_0b(s,\mathbb{P}_{(X_s,\Theta_s)},X_s)F(\mathcal{A}_s)\d s\rt],
    \end{aligned}
    \end{equation*}
	where we have used that $X$ solves \eqref{eq:semilinear}. Thus,
    Girsanov transformation yields
	\begin{equation*}
	\begin{aligned}
	&\E\left[F\lt(X_t(\mathcal{T}_t)\varepsilon_t^{-1}(\mathcal{T}_t)-\xi\rt)\right]\\
	=&\E\lt[\int^t_0\sigma(s,\mathbb{P}_{(X_s,\Theta_s)})\varepsilon_s^{-1}(\mathcal{T}_s)\mathbb{D}^H_sF\d s +F\int^t_0b(s,\mathcal{T}_s,\mathbb{P}_{(X_s,\Theta_s)},X_s(\mathcal{T}_s))\varepsilon_s^{-1}(\mathcal{T}_s)\d s\rt],
	\end{aligned}
	\end{equation*}
	i.e.,		
	\begin{equation*}
	\begin{aligned}
	&\E\lt[\int^t_0\sigma(s,\mathbb{P}_{(X_s,\Theta_s)})\varepsilon_s^{-1}(\mathcal{T}_s)\mathbb{D}^H_sF\d s\rt]\\=&\E\left[F\lt(X_t(\mathcal{T}_t)\varepsilon_t^{-1}(\mathcal{T}_t)-\xi-
	\int^t_0b(s,\mathcal{T}_s,\mathbb{P}_{(X_s,\Theta_s)},X_s(\mathcal{T}_s))\varepsilon_s^{-1}(\mathcal{T}_s)\d s\rt)\rt].
	\end{aligned}
    \end{equation*}
    Observing that $(X_t(\mathcal{T}_t)\varepsilon_t^{-1}(\mathcal{T}_t)-\xi-
    \int^t_0b(s,\mathcal{T}_s,\mathbb{P}_{(X_s,\Theta_s)},X_s(\mathcal{T}_s))\varepsilon_s^{-1}(\mathcal{T}_s)\d s$ is square integrable, we see from Definition \ref{prop_duality} that $\lt(\sigma(s,\mathbb{P}_{(X_s,\Theta_s)})\varepsilon_s^{-1}(\mathcal{T}_s)\rt)I_{[0,t]}(s), s\in[0,T]$ belongs to $\textrm{Dom}(\delta_H)$, and    
		\begin{equation*}
			\begin{aligned}
		&\int^t_0\sigma(s,\mathbb{P}_{(X_s,\Theta_s)})\varepsilon_s^{-1}(\mathcal{T}_s)\d B_s^H\\
		=&X_t(\mathcal{T}_t)\varepsilon_t^{-1}(\mathcal{T}_t)-\xi-\int^t_0 b(s,\mathcal{T}_s,\mathbb{P}_{(X_s,\Theta_s)},X_s(\mathcal{T}_s))\varepsilon_s^{-1}(\mathcal{T}_s)\d s.
		\end{aligned}
		\end{equation*}
	But this is exactly equation (\ref{eq:Gir-semilinear}). The proof that any solution of equation (\ref{eq:Gir-semilinear}) solves also \eqref{eq:semilinear} uses the same argument.
\end{proof}

Now let us focus on equation (\ref{eq:semilinear}).
 We have the following existence and uniqueness result.
\begin{theorem}
	\label{thm-gir-unique}
Equation \eqref{eq:semilinear} admits a unique solution $X=\{X_t, t\in[0,T]\}\in L^{2,*}([0,T];\R)$.
\end{theorem}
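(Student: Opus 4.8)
The plan is to establish existence and uniqueness for the transformed equation \eqref{eq:Gir-semilinear} rather than \eqref{eq:semilinear} directly, since Theorem \ref{thm_semi-girsemi} tells us the two are equivalent for processes in $L^{2,*}([0,T];\R)$. First I would set up a Picard iteration scheme. Given $X^{(n)}$, define $X^{(n+1)}$ so that
\[
X_t^{(n+1)}(\T_t)\ep_t^{-1}(\T_t)=\xi+\int^t_0\sigma(s,\PP_{(X_s^{(n)},\Theta_s)})\ep_s^{-1}(\T_s)\d B_s^H+\int^t_0 b(s,\T_s,\PP_{(X_s^{(n)},\Theta_s)},X_s^{(n)}(\T_s))\ep_s^{-1}(\T_s)\d s,
\]
and recover $X^{(n+1)}$ by applying $\A_t$ and multiplying by $\ep_t$. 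The point is that, once $\PP_{(X^{(n)}_s,\Theta_s)}$ is frozen, the $\sigma$-term is a deterministic bounded integrand (cf. the Remark after Definition of the solution), and the drift is a genuine pathwise integral, so the right-hand side is well-defined in $L^{2,*}$; here one uses (H1) together with the moment bounds $\E[\sup_t\ep_t^p(\T_t)]<\infty$ for all real $p$ from the Girsanov subsection.

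Next I would derive the a priori estimates in the $L^{2,*}$-norm. Writing $\|\varphi\|_t^2:=\E[|\varphi(t)|^2\ep_t^{-1}]$ (or rather the version adapted to the transformed variables), I would bound $\E[|X_t^{(n+1)}(\T_t)\ep_t^{-1}(\T_t)|^2]$ using the isometry-type formula \eqref{eq:skorohodsquare} for the Skorohod integral of the $\sigma$-term — which, since the integrand is deterministic, reduces essentially to $\int_0^t|K_H^*(\cdot)|^2$ and is controlled by the uniform bound on $\sigma$ — together with Cauchy--Schwarz and (H1) for the drift term, and Hölder to separate the $\ep^{-1}(\T)$ factors. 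This yields a bound of Gronwall type: $\sup_{s\le t}\E[|X_s^{(n+1)}|^2\ep_s^{-1}]\le C+C\int_0^t\sup_{r\le s}\E[|X_r^{(n)}|^2\ep_r^{-1}]\d s$, giving uniform-in-$n$ bounds on $[0,T]$. For contraction, I would estimate the difference $X^{(n+1)}-X^{(n)}$ the same way; the Lipschitz hypotheses in (H1) on $\sigma$ and $b$ in the measure argument, combined with $W_1(\PP_{(X_s^{(n)},\Theta_s)},\PP_{(X_s^{(n-1)},\Theta_s)})\le\E[|X_s^{(n)}-X_s^{(n-1)}|]$, give
\[
\sup_{s\le t}\E\lt[|X_s^{(n+1)}-X_s^{(n)}|^2\ep_s^{-1}\rt]\le C\int_0^t\sup_{r\le s}\E\lt[|X_r^{(n)}-X_r^{(n-1)}|^2\ep_r^{-1}\rt]\d s,
\]
and iterating produces the summable factor $(Ct)^n/n!$, so $(X^{(n)})$ is Cauchy in $L^{2,*}([0,T];\R)$; its limit $X$ solves \eqref{eq:Gir-semilinear}, and one checks $XI_{[0,t]}\in\mathrm{Dom}(\delta_H)$ by passing to the limit in the duality relation \eqref{eq_duality} (the $\sigma$-integrand converges in $L^2([0,T])$ as a deterministic function, so its Skorohod integral converges). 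Uniqueness follows from the same contraction estimate applied to two solutions.

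The main obstacle I anticipate is handling the Skorohod integral term rigorously through the limit: one must make sure that the $L^{2,*}$-convergence of $X^{(n)}$ implies convergence of the frozen laws $\PP_{(X^{(n)}_s,\Theta_s)}$ in $W_1$ uniformly in $s$ (straightforward from the estimate above), and then that $\sigma(s,\PP_{(X^{(n)}_s,\Theta_s)})\ep_s^{-1}(\T_s)$ converges in an appropriate space so that the corresponding Skorohod integrals converge in $L^2(\Omega)$ — here the key simplification, which I would exploit throughout, is that $s\mapsto\sigma(s,\PP_{(X_s,\Theta_s)})$ is a \emph{deterministic} bounded function, so \eqref{eq:skorohodsquare} degenerates (the second, Malliavin-derivative, term vanishes because the integrand has no randomness) and the integral is just a Wiener integral, isometric to $\|K_H^*(\sigma\ep^{-1}(\T)\cdot I_{[0,t]})\|_{L^2}$ after the Girsanov reduction. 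A secondary technical point is the justification of the Fubini-type manipulations and the $\ep^{-1}(\T)$-weights, all of which are under control thanks to the integrability $\E[\sup_{t}\ep_t^p(\T_t)]<\infty$ for every $p\in\R$; these are routine once the scheme and norms are fixed.
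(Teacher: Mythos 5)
Your overall strategy coincides with the paper's: Picard iteration on the transformed equation \eqref{eq:Gir-semilinear}, transfer back to \eqref{eq:semilinear} via Theorem \ref{thm_semi-girsemi}, a priori bounds in the $L^{2,*}$-norm, a contraction estimate using the $W_1$-Lipschitz hypotheses in (H1), the factorial factor from iteration, and uniqueness from the same estimate. However, there is a genuine gap in the central analytic step. You claim, and exploit "throughout", that the diffusion integrand in the transformed equation is deterministic, so that the second (Malliavin-derivative) term in \eqref{eq:skorohodsquare} vanishes and the stochastic integral is a plain Wiener integral. This is false in the setting of Theorem \ref{thm-gir-unique}: the integrand is $\sigma(s,\PP_{(X_s,\Theta_s)})\,\ep_s^{-1}(\T_s)$, and while the factor $\sigma(s,\PP_{(X_s,\Theta_s)})$ is indeed deterministic, the Girsanov weight $\ep_s^{-1}(\T_s)=\exp\{-\int_0^s\gamma_r\d B^H_r-\tfrac12\int_0^s(K_H^*(\gamma I_{[0,s]}))^2(r)\d r\}$ is a genuinely random process whenever $\gamma\not\equiv0$ (which is exactly the case the theorem covers; the deterministic simplification is only available for $\gamma\equiv0$, cf. Remark \ref{remark}, and that is the case used later for the control problem). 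Consequently the cross term in \eqref{eq:skorohodsquare} does not vanish, and its estimation is an essential part of the proof: one must compute $D^W_s\bigl(K_H^*(\sigma(\cdot,\PP_{(X_\cdot,\Theta_\cdot)})1_{[0,t]}\ep_\cdot^{-1}(\T_\cdot))\bigr)$, which produces factors $K_H^*(\gamma 1_{[0,v]})$ from differentiating the Girsanov density, and then control the resulting quadruple integral by fractional-kernel estimates (these are the terms $I_2$ and $I_4$ in the paper's proof). Your argument, as written, simply omits this.

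A secondary imprecision: even for the terms you do treat, the bound on $\int_0^t|K_H^*(\rho\,1_{[0,t]})(s)|^2\d s$ cannot be closed into a plain Gronwall inequality with $\int_0^t\E[|X^{(n)}_r-X^{(n-1)}_r|^2\ep_r^{-1}]\d r$ on the right; because of the singular kernel one needs a H\"older argument with exponent $p>1/(H-\tfrac12)$, leading to an inequality of the form $\bigl(\E[|\overline{X}^{n+1}_t|^2\ep_t^{-1}]\bigr)^{p/2}\le C\int_0^t\bigl(\E[|\overline{X}^{n}_r|^2\ep_r^{-1}]\bigr)^{p/2}\d r$, to which the iteration is then applied. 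This is a fixable technicality, but together with the missing Malliavin cross term it means the key quantitative estimates of the proof are not actually established in your proposal.
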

\begin{proof}
	Given a process $X^n$ such that $\sup_{t\in[0,T]}\E\lt[|X^n_t|^2\ep^{-1}_t\rt]<+\infty$, we recursively define $X^{n+1}$ as:  
	$X^0=\xi,$
	and for $n\ge0$, $X^{n+1}_t=Y^{n+1}_t(\mathcal{T}_t)\varepsilon_t^{-1}(\mathcal{T}_t)$, where
	\begin{equation*}
	Y^{n+1}_t=\xi+\int^t_0\sigma(s,\mathbb{P}_{(X^n_s,\Theta_s)})\varepsilon_s^{-1}(\mathcal{T}_s)\d B_s^H +\int^t_0 b(s,\mathcal{T}_s,\mathbb{P}_{(X^n_s,\Theta_s)},X^n_s(\mathcal{T}_s))\varepsilon_s^{-1}(\mathcal{T}_s)\d s.
	\end{equation*}
	Then from the linear growth of $b$ we  have,
	\begin{equation*}
	\begin{aligned}
		&\sup_{t\in[0,T]}\E\lt[|X^{n+1}_t|^2\ep^{-1}_t\rt] =\sup_{t\in[0,T]}\E\lt[|X^{n+1}_t(\mathcal{T}_t)\ep^{-1}_t(\mathcal{T}_t)|^2\rt]\\
		\le&2\E[\xi^2]+2\sup_{t\in[0,T]}\E\lt[\lt|\int^t_0(\sigma(s,\mathbb{P}_{(X^n_s,\Theta_s)}))\ep_s^{-1}(\T_s)\d B_s^H\rt|^2\rt]\\
		&+2\sup_{t\in[0,T]}\E\lt[\lt|\int^t_0b(s,\T_s,\PP_{(X^n_s,\Theta_s)},X^n_s(\T_s))\ep_s^{-1}(\T_s)\d s\rt|^2\rt]\\
		\le &2\E[\xi^2]+2\sup_{t\in[0,T]}\E\lt[\lt|\int^t_0(\sigma(s,\mathbb{P}_{(X^n_s,\Theta_s)}))\ep_s^{-1}(\T_s)\d B_s^H\rt|^2\rt]\\ &+2T\sup_{t\in[0,T]}\E\lt[\int^t_0\lt(1+\lt(\E\lt[|X^n_s|\rt]\rt)^2+\lt|X^n_s(\T_s)\rt|^2\rt)\ep^{-2}_s(\T_s)\d s\rt].\\
		\end{aligned}
	\end{equation*}
From the assumption that $X^n\in L^{2,*}([0,T];\R)$ and Proposition \ref{prop_malliavin} in Section 2, we get
	\begin{equation*}
		\begin{aligned}
		&\sup_{t\in[0,T]}\E\lt[|X^{n+1}_t|^2\ep^{-1}_t\rt]\\
		\le&C+2\sup_{t\in[0,T]}\E\lt[\int^t_0\lt|K_H^*(\sigma(\cdot,\mathbb{P}_{(X^n_\cdot,\Theta_\cdot)})1_{[0,t]}(\cdot)\ep_{\cdot}^{-1}(\T_{\cdot}))(s)\rt|^2\d s\rt]\\
		&+4\sup_{t\in[0,T]}
		\E\bigg[\int^t_0\int^s_0 \mathbb{D}^H_s(\sigma(r,\mathbb{P}_{(X^n_r,\Theta_r)})1_{[0,t]}(r)\ep_{r}^{-1}(\T_{r})) \\
		&\times  \mathbb{D}^H_r(\sigma(s,\mathbb{P}_{(X^n_s,\Theta_s)})1_{[0,t]}(s)\ep_{s}^{-1}(\T_{s}))\d r\d s \bigg]\\
		:=&C+2I_1+4I_2.
	\end{aligned}
	\end{equation*}
	Now for the term $I_1$, we have
		\begin{equation*}
		\begin{aligned}
		I_1=&\sup_{t\in[0,T]}\E\lt[C_H^2\int^t_0\lt|\int^t_s\sigma(r,\mathbb{P}_{(X^n_r,\Theta_r)})\ep_r^{-1}(\T_r)\lt(\frac{r}{s}\rt)^{H-\frac12}(r-s)^{H-\frac32}\d r\rt|^2\d s\rt]\\
		\le&C\E\lt[\sup_{r\in[0,T]}\ep_r^{-2}(\T_r)\rt]\sup_{t\in[0,T]}\int^t_0\lt|K_H(t,s)\rt|^2\d s\\
		\le&C T^{2H}.
		\end{aligned}
		\end{equation*} 
	For the term $I_2$, we have from relation (\ref{def_DDH}) that
	\begin{equation*}
	\begin{aligned}
	&I_2	=C_H^2\sup_{t\in[0,T]}\E\bigg[\int^t_0\int^s_0\int^r_0|s-u|^{2H-2}\sigma (r,\mathbb{P}_{(X^n_r,\Theta_r)})\ep_r^{-1}(\T_r)\gamma_u\d u \\
	&\qquad\qquad\qquad\qquad\times\int^s_0|r-v|^{2H-2}\sigma (s,\mathbb{P}_{(X^n_s,\Theta_s)})\ep_s^{-1}(\T_s) \gamma_v  \d v\d r\d s \bigg]\\
	\le& C\E\lt[\sup_{r\in[0,T]}\ep_r^{-2}(\T_r)\rt]\sup_{t\in[0,T]}\int_0^t\int^s_0\int^r_0|s-u|^{2H-2}\d u \int^s_0|r-v|^{2H-2}\d v \d r\d s\\
	\le&C\sup_{t\in[0,T]}\int_0^t\int^s_0 \lt(s^{2H-1}-(s-r)^{2H-1}\rt)\lt(r^{2H-1}+(s-r)^{2H-1}\rt)\d r\d s\\
	\le& CT^{4H}.
	\end{aligned}
	\end{equation*}
	Hence for $X^n\in L^{2,*}([0,T];\R)$, we deduce that $X^{n+1}\in L^{2,*}([0,T];\R)$.

	In the following we prove the convergence of $X^n\in L^{2,*}([0,T];\R)$. We divide the proof into 4 steps.
	
	\textbf{Step 1.} 
	Define $\overline{X}^n_t=X^n_t-X^{n-1}_t$ and $\rho^n(t)=\sigma\lt(t,\mathbb{P}_{(X^n_t,\Theta_t)}\rt)-\sigma\lt(t,\mathbb{P}_{(X^{n-1}_t,\Theta_t)}\rt)$. Notice that $\rho^n$ is a deterministic function. Then we have 
	\begin{equation}
	\label{eq:Xbar}
	\begin{aligned}
	&\E\lt[\lt|\overline{X}^{n+1}_t\rt|^2\varepsilon_t^{-1}\rt]=\E\lt[\lt|\overline{X}^{n+1}_t(\mathcal{T}_t)\varepsilon_t^{-1}(\mathcal{T}_t)\rt|^2\rt]\\
	&\le 2\E\lt[\lt|\int^t_0(\sigma(s,\mathbb{P}_{(X^n_s,\Theta_s)})-\sigma(s,\mathbb{P}_{(X^{n-1}_s,\Theta_s)}))\ep_s^{-1}(\T_s)\d B_s^H\rt|^2\rt]\\
	&+2\E\lt[\lt|\int^t_0(b(s,\T_s,\PP_{(X^n_s,\Theta_s)},X^n_s(\T_s))-b(s,\T_s,\mathbb{P}_{(X^{n-1}_s,\Theta_s)},X^{n-1}_s(\T_s)))\ep_s^{-1}(\T_s)\d s\rt|^2\rt]\\
	&=2\E\lt[\int^t_0\lt|K_H^*(\rho^n(\cdot)1_{[0,t]}(\cdot)\ep_{\cdot}^{-1}(\T_{\cdot}))(s)\rt|^2\d s\rt]\\
	&+4\E\bigg[\int^t_0\int^s_0\mathbb{D}_s^H\lt(\rho^n(r)1_{[0,t]}(r)\ep_{r}^{-1}(\T_{r})\rt) \mathbb{D}_r^H\lt(\rho^n(s)1_{[0,t]}(s)\ep_{s}^{-1}(\T_{s})\rt)\d r\d s \bigg]\\
	&+2\E\lt[\lt|\int^t_0(b(s,\T_s,\PP_{(X^n_s,\Theta_s)},X^n_s(\T_s))-b(s,\T_s,\mathbb{P}_{(X^{n-1}_s,\Theta_s)},X^{n-1}_s(\T_s)))\ep_s^{-1}(\T_s)\d s\rt|^2\rt]\\
	&=2I_3(t)+4I_4(t)+2I_5(t).
	\end{aligned}
	\end{equation}
	
	Now we deal with $I_3(t)$, $I_4(t)$ and $I_5(t)$ separately.
	
	\textbf{Step 2. The term $I_3(t)$.}  
	
	From the definition of operator $K_H^*$, we have 
	\begin{equation*}
	\begin{aligned}
	I_3(t)=&\E\lt[\int^t_0\lt|K_H^*(\rho^n(\cdot)1_{[0,t]}(\cdot)\ep_{\cdot}^{-1}(\T_{\cdot}))(s)\rt|^2\d s\rt]\\
	\le&\E\lt[\sup_{r\in[0,T]}\ep_r^{-2}(\T_r)\rt] C_H^2\int^t_0\lt(\int^t_s|\rho^n(r)|\lt(\frac{r}{s}\rt)^{H-\frac12}(r-s)^{H-\frac32}\d r\rt)^2\d s\\
	\le&C\int^t_0\lt|K_H^*(\rho^n(\cdot)1_{[0,t]}(\cdot))(s)\rt|^2\d s.
	\end{aligned}
	\end{equation*} 
	
	Let $q>1$ be adjoint to $p>\frac{1}{H-\frac12}$: $\frac 1p+\frac 1q=1$. Then, $1<q<\frac{1}{\frac32-H}<2$.   Observe that
	\begin{equation}
	\label{eq:kw}
	\begin{aligned}
	&\int^t_0\lt|K_H^*(\rho^n(\cdot)1_{[0,t]}(\cdot))(s)\rt|^2\d s\\
	\le&C\int^t_0\lt(\int_s^t|\rho^n(r)|^p\d r\rt)^{\frac 2p}\lt(\int^t_s \lt[\lt(\frac r s\rt)^{H-\frac12}(r-s)^{H-\frac32}\rt]^q\d r\rt)^{\frac 2q}\d s\\
	\le&C\lt(\int_0^t|\rho^n(r)|^p\d r\rt)^{\frac 2p}\int^t_0\lt(\frac t s\rt)^{2H-1}\lt(\int^t_s (r-s)^{q(H-\frac32)}\d r\rt)^{\frac 2q}\d s.
	\end{aligned}
	\end{equation} 
	Hence, as $q(H-\frac32)>-1$, this yields
	\begin{equation}
	\label{eq:kw1}
	\begin{aligned}
	&\int^t_0\lt|K_H^*(\rho^n(\cdot)1_{[0,t]}(\cdot))(s)\rt|^2\d s\\
	\le&C\lt(\int_0^t|\rho^n(r)|^p\d r\rt)^{\frac 2p}\int^t_0\lt(\frac t s\rt)^{2H-1}\lt(\frac{1}{q\lt(H-\frac32\rt)+1}\rt)^{\frac2q}(t-s)^{2H-3+\frac2q}\d s\\
\le &C\lt(\frac{1}{q\lt(H-\frac32\rt)+1}\rt)^{\frac2q}\frac{t^{2H-2+\frac2q}}{2-2H}\lt(\int_0^t|\rho^n(r)|^p\d r\rt)^{\frac 2p}.
	\end{aligned}
	\end{equation} 
	Since on the other hand from the Lipschitz continuity of $\sigma$ with respect to the 1-Wasserstein metric $W_1$ we have 
	\begin{equation*}
	\begin{aligned}
	&\lt(\int_0^t|\rho^n(r)|^p\d r\rt)^{\frac 2p}
	=\lt(\int_0^t|\sigma(r,\PP_{(X^n_r,\Theta_r)})-\sigma(r,\PP_{(X^{n-1}_r,\Theta_r)})|^p\d r\rt)^{\frac 2p}\\
	\le &C\lt(\int^t_0\lt(\E\lt[|\overline{X}^n_r|\rt]\rt)^p\d r\rt)^{\frac 2 p}
	\le  C\lt(\int^t_0\lt(\E\lt[|\overline{X}^n_r|^2\ep_r^{-1}\rt]\rt)^\frac p2\d r\rt)^{\frac 2 p}, 
	\end{aligned}
	\end{equation*}
	we obtain 
	\begin{equation}
	\label{eq:I_1}
	I_3(t)\le C\lt(\frac{1}{q\lt(H-\frac32\rt)+1}\rt)^{\frac2q}\frac{t^{2H-2+\frac2q}}{2-2H} \lt(\int^t_0\lt(\E\lt[|\overline{X}^n_r|^2\ep_r^{-1}\rt]\rt)^\frac p2\d r\rt)^{\frac 2 p}.
	\end{equation}
	
	\textbf{Step 3. The term $I_4(t)$.}
	
	Now we deal with the term $I_4(t)$, which can be written as 
	\begin{equation*}
	\begin{aligned}
	I_4(t)=&\E\bigg[\int^t_0\int^s_0\mathbb{D}_s^H\lt(\rho^n(r)1_{[0,t]}(r)\ep_{r}^{-1}(\T_{r})\rt) \mathbb{D}_r^H\lt(\rho^n(s)1_{[0,t]}(s)\ep_{s}^{-1}(\T_{s})\rt)\d r\d s \bigg]\\
	=&C_H^2\E\bigg[\int^t_0\int^s_0\int^r_0|s-u|^{2H-2}\rho^n(r)\ep_r^{-1}(\T_r)\gamma_u\d u \\
	&\qquad\qquad\qquad\qquad\times\int^s_0|r-v|^{2H-2}\rho^n(s)\ep_s^{-1}(\T_s) \gamma_v  \d v\d r\d s \bigg]\\
	\le& C\E\lt[\sup_{r\in[0,T]}\ep_r^{-2}(\T_r)\rt]\int_0^t\int^s_0|\rho^n(r)||\rho^n(s)|\\
	&\qquad\qquad\qquad\qquad\times\int^r_0|s-u|^{2H-2}\d u \int^s_0|r-v|^{2H-2}\d v \d r\d s.
	\end{aligned}
	\end{equation*}
	Following a similar argument to the first part of this proof and with the same $p$ as in Step 2, we have
	\begin{equation*}
	\begin{aligned}
	I_4(t)\le& C\int_0^t\int^s_0|\rho^n(r)||\rho^n(s)|\lt(s^{2H-1}-(s-r)^{2H-1}\rt)\lt(r^{2H-1}+(s-r)^{2H-1}\rt)\d r\d s\\
	\le&C t^{4H-2+\frac2q}\lt(\int^t_0|\rho^n(s)|^p\d s\rt)^{\frac{2}{p}}.
	\end{aligned}
	\end{equation*}
	From the computations in Step 2, we have
	\begin{equation}
	\label{eq:I_2}
	I_4(t)\le Ct^{4H-2+\frac2q}\lt(\int^t_0\lt(\E\lt[\lt|\overline{X}^n_r\rt|^2\ep_r^{-1}\rt]\rt)^{\frac p2}\d r\rt)^{\frac2p}.
	\end{equation}
	
	\textbf{Step 4. The term $I_5(t)$.}
	
	From the Lipschitz continuity of function $b$, for $p>2$, we have 
	\begin{equation}
	\label{eq:I_5}
	\begin{aligned}
	&I_5(t)\\
	=&\E\lt[\lt|\int^t_0(b(s,\T_s,\PP_{(X^n_s,\Theta_s)},X^n_s(\T_s))-b(s,\T_s,\mathbb{P}_{(X^{n-1}_s,\Theta_s)},X^{n-1}_s(\T_s)))\ep_s^{-1}(\T_s)\d s\rt|^2\rt]\\
	\le& t\E\lt[\int^t_0\lt|(b(s,\T_s,\PP_{(X^n_s,\Theta_s)},X^n_s(\T_s))-b(s,\T_s,\mathbb{P}_{(X^{n-1}_s,\Theta_s)},X^{n-1}_s(\T_s)))\ep_s^{-1}(\T_s)\rt|^{2}\d s\rt]\\
	\le& Ct\E\lt[\int^t_0\lt(\lt(\E\lt[\lt|\overline{X}^n_s \rt|\rt]\rt)^2 +|\overline{X}^n_s(\T_s)|^2\rt)\ep_s^{-2}(\T_s)\d s\rt]\\
	\le& Ct\E\lt[\sup_{s\in[0,T]}\ep_s^{-2}(\T_s)\rt]\int^t_0 \lt(\E\lt[\lt|\overline{X}^n_s\rt|\rt]\rt)^2\d s +Ct\E\lt[\int^t_0 |\overline{X}^n_s(\T_s)|^2\ep_s^{-2}(\T_s)\d s\rt]\\
	\le& Ct\int^t_0\E\lt[\lt|\overline{X}^n_s\rt|^2\ep_s^{-1}\rt]\d s\\
	\le &Ct^{1+\frac{p-2}{p}}\lt(\int^t_0\lt(\E\lt[\lt|\overline{X}^n_s\rt|^2\ep_s^{-1}\rt]\rt)^{\frac p 2}\d s\rt)^{\frac 2p}.
	\end{aligned}
	\end{equation}
	
	From equation (\ref{eq:Xbar}) and by combining the inequalities (\ref{eq:I_1}), (\ref{eq:I_2}) and (\ref{eq:I_5}) together, we deduce that 
	\begin{equation*}
	\begin{aligned}
	&\E\lt[|\overline{X}^{n+1}_t|^2\ep^{-1}_t\rt]\\
	\le& C_p\lt(t^{2H-2+\frac2q}+t^{4H-2+\frac2q}+t^{1+\frac{p-2}{p}}\rt)\lt(\int^t_0\lt(\E\lt[\lt|\overline{X}^n_r\rt|^2\ep^{-1}_r\rt]\rt)^{\frac p2}\d r\rt)^{\frac2p},
	\end{aligned}
	\end{equation*}
	which is equivalent to 
	\begin{equation}
	\label{picard}
	\begin{aligned}
	\lt(\E\lt[|\overline{X}^{n+1}_t|^2\ep^{-1}_t\rt]\rt)^{\frac p2} 
	\le& C_p\lt(t^{2H-2+\frac2q}+t^{4H-2+\frac2q}+t^{1+\frac{p-2}{p}}\rt)\int^t_0\lt(\E\lt[\lt|\overline{X}^n_r\rt|^2\ep^{-1}_r\rt]\rt)^{\frac p2}\d r\\
	\le & C_{p,T} \int^t_0\lt(\E\lt[\lt|\overline{X}^n_r\rt|^2\ep^{-1}_r\rt]\rt)^{\frac p2}\d r.
	\end{aligned}
	\end{equation}
	
	By the Picard iteration, we get 
	$$
	\lt(\E\lt[|\overline{X}^{n+1}_t|^2\ep^{-1}_t\rt]\rt)^{\frac p2} 
	\le C_0C^n \frac{t^n}{n!}.
	$$
	Hence 
	$$
	\sup_{t\in[0,T]}\E\lt[|\overline{X}^{n+1}_t|^2\ep^{-1}_t\rt]
	\le C_0C^\frac{2n}{p} \lt(\frac{T^n}{n!}\rt)^{\frac2p}.
	$$
	This means $X^n$ is a Cauchy sequence in $L^{2,*}([0,T];\R)$ and the limit $X$ is a solution of equation \eqref{eq:Gir-semilinear}, and thus also of (\ref{eq:semilinear}) (See Theorem \ref{thm_semi-girsemi}). 
	
	\smallskip
	 Let us now show that the solution of (24) is unique. For this we consider two solutions $X$ and $Y$ of equation (24). Repeating the argument developed in the frame of the Picard iteration, we get in analogy to (31)
	 $$(\E[|X_t-Y_t|^2\varepsilon_t^{-1}])^{p/2}\le C_{p,T}\int_0^t\lt(\E[|X_r-Y_r|^2\varepsilon_r^{-1}]\rt)^{p/2}dr,\ t\in[0,T],$$
	 and, thus, Gronwall's inequality yields that $X_t=Y_t,\ \mathbb{P}$-a.s., $t\in[0,T]$. Therefore, the uniqueness holds true and the proof is complete.
    \end{proof}
\begin{remark}\label{remark}
	One can see in the Step 4 of the above proof, assumption (H1) is essential. However, if we consider the equation with $\gamma_s\equiv0$, i.e., $\ep_s\equiv 1$ in equation \eqref{eq:Gir-semilinear} (which is the case that we consider in the next section), then we only need the following assumption on $\sigma$ and $b$:
	
	\textbf{(H1$^\pr$)} For any $s\in[0,T]$, $x, x^\prime\in\R$, $\eta, \eta^\prime\in L^2(\Omega,\mathcal{F}, \PP;\R)$ and $\Theta\in L^2(\Omega,\mathcal{F}, \PP;$ $\R^m)$, there exists a constant $C>0$ such that
	$$
	|\sigma(s,\PP_{(\eta,\Theta)})|\le C\lt(1+\lt(\E\lt[|\eta^2|\rt]\rt)^\frac12\rt),
	$$ 
	$$
    |b(s,\PP_{(\eta,\Theta)},x)|\le C\lt(1+\lt(\E\lt[|\eta^2|\rt]\rt)^\frac12
    +|x|\rt),
	$$
	$$ 
	\lt|\sigma(s, \PP_{(\eta,\Theta)})-\sigma(s,\PP_{(\eta^\prime,\Theta)})\rt|\le C W_2(\eta,\eta^\pr), 
	$$
	$$
	|b(s,\PP_{(\eta,\Theta)},x)-b(s,\PP_{(\eta^\prime,\Theta)},x^\prime)|\le C \lt(W_2(\eta,\eta^\pr)+|x-x^\prime|\rt).
	$$
	Moreover, if $\gamma_s\equiv0$, the space $L^{2,*}([0,T];\R)$ becomes the classical space $L^2_{\mathbb{F}}([0,T];\R)$, the space of $\mathbb{F}$-adapted square integrable processes.
\end{remark}

\section{Mean-field stochastic control problem driven by fractional Brownian motion with $H>1/2$}
\label{section control}
In this section, we study a mean-field stochastic control problem driven by a fractional Brownian motion $B^H$ with $H>1/2$. 

Let $U$ be a nonempty bounded convex subset of $\R^m$. We define the space of admissible controls as follows:
$$
\mathcal{U}([0,T]):=\lt\{u:[0,T]\times \Omega\to U\big|u \textrm{ is an} \ \mathbb{F}\textrm{-adapted process}\rt\}.
$$ 
We consider the following dynamics for our mean-field controlled system:
\begin{equation}
\label{eq:control}
X_t^u=x+\int^t_0\sigma(\PP_{X_s^u})\d B^H_s+\int^t_0 b(\PP_{(X_s^u,u_s)},X_s^u, u_s)\d s,
\end{equation}
where $x\in\R$, and $u\in\mathcal{U}([0,T])$ is an admissible control process. For any given  $u\in\mathcal{U}([0,T])$, we know from Theorem \ref{thm_semi-girsemi} that there exists a unique solution to the controlled system (\ref{eq:control}). In fact, (\ref{eq:control}) constitute a particular case of equation (\ref{eq:semilinear}) with $\gamma\equiv 0$ (See Remark \ref{remark}). 

The cost functional is assumed to be depend on a running cost function $f:[0,T]\times \mathcal{P}_2(\R\times U)\times\R\times U\to \R$ and a terminal cost function $g: \R\times \mathcal{P}_2(\R)\to \R$:
\begin{equation}
\label{eq:cost}
J(u)=\E\lt[\int^T_0f\lt(\PP_{(X^u_t,u_t)},X^u_t,u_t\rt)\d t +g\lt(X_T^u,\PP_{X^u_T}\rt)\rt]. 
\end{equation}
Our aim is to characterise an optimal control $u^*\in \mathcal{U}([0,T])$ such that
\begin{equation}
\label{eq:optimal cost}
J(u^*)=\inf_{u\in \mathcal{U}([0,T])}J(u).
\end{equation}
If there exists such optimal control $u^*$, we call the corresponding pair $(X^*,u^*)$  optimal  for  the control problem. Here $X^*=X^{u^*}$ denotes the solution of \eqref{eq:control} associated with the control process $u^*$.

The main purpose of this section is to find a necessary condition under which the pair  $(X^*,u^*)$ is optimal. This condition will be based on Pontryagin's maximum principle. 

To achieve this goal, we make first the following assumptions on the coefficients $\sigma: \mathcal{P}_2(\R)\to \R, b: \mathcal{P}_2(\R\times U)\times\R\times U\to \R$,  $g:\R\times\mathcal{P}_2(\R)\to \R$ and $f: \mathcal{P}_2(\R\times U)\times\R\times U\to \R$. 

(\textbf{H2}) $\sigma, b, g,f$ are Lipschitz continuous, i.e., there exists a constant $C>0$ such that
\begin{itemize}
	
	\item[(i)] $\lt|\sigma(\mu)-\sigma(\mu^\pr)\rt|\le C W_2(\mu,\mu^\pr)$, for any $\mu, \mu^\prime\in \mathcal{P}_2(\R)$;
	\item[(ii)] $|b(\mu,x,u)-b(\mu^\pr,x^\prime,u^\pr)|\le C \lt(W_2(\mu,\mu^\pr)+|x-x^\prime|+|u-u^\pr|\rt)$, for any $(\mu,x,u)$, $(\mu^\pr,x^\pr,u^\pr)\in\mathcal{P}_2(\R\times U)\times \R\times U$;
	\item[(iii)] $|g(x,\mu)-g(x^\pr,\mu^\pr)|\le C(|x-x^\pr|+W_2(\mu,\mu^\pr)),$ for any $(x,\mu), (x^\pr,\mu^\pr)\in\R\times\mathcal{P}_2(\R)$,
	\item[(ii)] $|f(\mu,x,u)-f(\mu^\pr,x^\prime,u^\pr)|\le C \lt(W_2(\mu,\mu^\pr)+|x-x^\prime|+|u-u^\pr|\rt)$, for any $(\mu,x,$ $u)$, $(\mu^\pr,x^\pr,u^\pr)\in\mathcal{P}_2(\R\times U)\times \R\times U$;
\end{itemize}

(\textbf{H3})  $\sigma$ is differentiable in $(\mu,x)\in\mathcal{P}_2(\R)\times\R$, and the derivative $\partial_{\mu} \sigma: \mathcal{P}_2(\R)\times\R\to\R$ is bounded and Lipschitz continuous, i.e., there exists a constant $C>0$ such that
\begin{itemize}
	\item[(i)] $|\partial_\mu \sigma(\mu,y)|\le C$, for any $(\mu,y)\in\mathcal{P}_2(\R)\times\R$;
	\item[(ii)] $|\partial_\mu \sigma(\mu,y)-\partial_\mu\sigma(\mu^\pr,y^\pr)|\le C(W_2(\mu,\mu^\pr)+|y-y^\pr|),$ for any $(\mu,y)$, $(\mu^\pr,y^\pr)\in\mathcal{P}_2(\R)\times\R$.
\end{itemize} 

(\textbf{H4}) For $w=b$ and $f$, $w$ is differentiable in $(\mu,x,u)\in\mathcal{P}_2(\R\times U)\times\R\times U$ and the derivatives $\partial_\mu w:\mathcal{P}_2(\R\times U)\times \R\times U\times(\R\times U)\to \R\times U$, $\partial_x w:\mathcal{P}_2(\R\times U)\times \R\times U\to\R$ and $\partial_u w:\mathcal{P}_2(\R\times U)\times \R\times U\to U$ are bounded and Lipschitz continuous, i.e., there exists a constant $C>0$ such that
\begin{itemize}
 	\item[(i)] $|\partial_\mu w(\mu,x,u,y)|+|\partial_x w(\mu,x,u)|+|\partial_u w(\mu,x,u)|\le C$, for any $(\mu,x,u,y)\in\mathcal{P}_2(\R\times U)\times \R\times U\times (\R\times U)$;
 	\item[(ii)] $|\partial_\mu w(\mu,x,u,y)-\partial_\mu w(\mu^\pr,x^\pr,u^\pr,y^\pr)|\le C(W_2(\mu,\mu^\pr)+|x-x^\pr|+|u-u^\pr|+|y-y^\pr|),$ for any $(\mu,x,u,y), (\mu^\pr,x^\pr,u^\pr,y^\pr)\in\mathcal{P}_2(\R\times U)\times \R\times U\times(\R\times U)$;
 	\item[(iii)] $|\partial_x w(\mu,x,u)-\partial_x w(\mu^\pr,x^\pr,u^\pr)|\le C(W_2(\mu,\mu^\pr)+|x-x^\pr|+|u-u^\pr|),$ for any $(\mu,x,u), (\mu^\pr,x^\pr,u^\pr)\in\mathcal{P}_2(\R\times U)\times \R\times U$;
 	\item[(iv)] $|\partial_u w(\mu,x,u)-\partial_u w(\mu^\pr,x^\pr,u^\pr)|\le C(W_2(\mu,\mu^\pr)+|x-x^\pr|+|u-u^\pr|),$ for any $(\mu,x,u), (\mu^\pr,x^\pr,u^\pr)\in\mathcal{P}_2(\R\times U)\times \R\times U$.
\end{itemize} 
 
 \textbf{(H5)} $g$ is differentiable in $(x,\mu)\in\R\times\mathcal{P}_2(\R)$ and the derivatives $\partial_x g: \R\times\mathcal{P}_2(\R)\to\R$ and $\partial_\mu g:\R\times\mathcal{P}_2(\R)\times\R\to \R$ are bounded.

For any $\ep\in [0,1]$ and $u\in\mathcal{U}([0,T])$, let $u^\ep=u^*+\ep(u-u^*)$. Observe that, thanks to the convexity of $U$, $u^\ep\in\mathcal{U}([0,T])$. We denote by $X^\ep$ the solution of equation (\ref{eq:control}) with $u$ replaced by $u^\ep$.  
\begin{lemma}
\label{lemma_Y}
The following SDE obtained by formal differentiation of \eqref{eq:control} for $u=u^\ep$ with respect to $\ep$ at $\ep=0$, for $t\in[0,T]$,
\begin{equation}
\label{eq:Y}
\begin{aligned}
Y_t=&\int^t_0\wt{\E}\lt[\partial_{\mu}\sigma\lt(\PP_{X^*_s}, \wt{X}_s^*\rt)\wt{Y}_s\rt]\d B_s^H\\
&+\int^t_0\wt{\E}\lt[\lt\langle\partial_{\mu}b\lt(\PP_{(X_s^*,u^*_s)}, X^*_s, u^*_s, \wt{X}_s^*,\wt{u}^*_s\rt),\lt(\wt{Y}_s, \wt{u}_s-\wt{u}^*_s\rt)\rt\rangle\rt]\d s\\
&+\int^t_0\partial_xb\lt(\PP_{(X^*_s,u^*_s)}, X_s^*, u^*_s\rt)Y_s\d s+\int^t_0\partial_ub\lt(\PP_{(X^*_s,u^*_s)}, X_s^*, u^*_s\rt)(u_s-u_s^*)\d s, 
\end{aligned}
\end{equation}
has  a unique solution $Y=(Y_t)_{t\in[0,T]}\in L^{2}([0,T];\R)$. Moreover, 
\begin{equation}
\label{eq:L2-Y}
\lim_{\ep\downarrow0} \sup_{t\in[0,T]}\E\lt[\lt|Y_t-\frac{X^\ep_t-X^*_t}{\ep}\rt|^2\rt]=0.
\end{equation}
In the above equation, $(\wt{X}^*,\wt{Y},\wt{u},\wt{u}^*)$ is an independent copy of $(X^*,Y,u,u^*)$ defined on a probability space $(\wt{\Omega},\wt{\F}, \wt{\PP})$. The expectation $\wt{\E}[\cdot]$ under $\wt{\PP}$ only concerns $(\wt{X}^*,\wt{Y},\wt{u},\wt{u}^*)$ but not $(X^*,Y,u,u^*)$.
\end{lemma}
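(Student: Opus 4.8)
The plan is to recognise \eqref{eq:Y} as a linear mean-field SDE of the type handled in Section~\ref{section sde} with $\gamma\equiv0$, and then to identify $Y$ as the $L^2$-limit of the difference quotients $\ep^{-1}(X^\ep-X^*)$ via a first-order Taylor expansion of $\sigma$ and $b$ in all their arguments, the error being absorbed by an a priori stability estimate in $\ep$. For the solvability part, splitting the inner product shows that the term $\wt\E\lt[\langle\partial_\mu b(\cdots),(\wt Y_s,\wt u_s-\wt u^*_s)\rangle\rt]$ in \eqref{eq:Y} decomposes into a contribution of $\wt Y_s$, which depends on $Y_s$ only through the joint law $\PP_{(Y_s,X^*_s,u^*_s)}$, and a contribution of $\wt u_s-\wt u^*_s$, which is a given $\mathbb F$-adapted process; likewise $\partial_xb(\cdots)$ is a given bounded adapted coefficient of $Y_s$, $\partial_ub(\cdots)(u_s-u^*_s)$ a given process, and the $\d B^H_s$-coefficient $\wt\E[\partial_\mu\sigma(\PP_{X^*_s},\wt X^*_s)\wt Y_s]$ is deterministic in $s$. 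Hence \eqref{eq:Y} is a particular case of \eqref{eq:semilinear} with $\gamma\equiv0$, the data $(X^*,u^*,u)$ being encoded in the auxiliary process $\Theta$, and assumption (H1$^\pr$) of Remark~\ref{remark} holds thanks to the uniform bounds $|\partial_\mu\sigma|,|\partial_\mu b|,|\partial_xb|,|\partial_ub|\le C$ from (H3)(i) and (H4)(i). Theorem~\ref{thm-gir-unique} (or, equally well, a direct Picard iteration copying its proof, which is easier here since every stochastic integral is a Wiener integral) then provides a unique $Y\in L^2_{\mathbb F}([0,T];\R)=L^{2,*}([0,T];\R)$.

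Next I would establish the a priori estimates. Since $x$ is deterministic and $\sigma,b$ have linear growth, $X^\ep$ and $X^*$ have all moments finite, uniformly in $\ep$. Using the Lipschitz continuity (H2), the identity $u^\ep-u^*=\ep(u-u^*)$ and the boundedness of $U$ (so that $\E\int_0^T|u^\ep_s-u^*_s|^2\d s\le C\ep^2$), together with the inequality $\E\big[|\int_0^t f(s)\d B^H_s|^2\big]\le C\,t^{2H-1}\int_0^t f(s)^2\d s$ for deterministic $f$ (the clean version, when $\gamma\equiv0$, of the estimate for $I_1$ in the proof of Theorem~\ref{thm-gir-unique}, the second term in \eqref{eq:skorohodsquare} vanishing) and Gronwall's lemma, one obtains $\sup_{t\in[0,T]}\E[|X^\ep_t-X^*_t|^2]\le C\ep^2$; interpolating with the uniformly bounded higher moments this also gives $\sup_{t}\E[|X^\ep_t-X^*_t|^4]\to0$ as $\ep\dw0$.

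Then I would Taylor-expand. Put $Z^\ep_t:=\ep^{-1}(X^\ep_t-X^*_t)-Y_t$. Since $\wt\sigma$ and $\wt b$ are $C^1$ on $L^2$, integrating their derivatives along the segments joining $X^*_s$ to $X^\ep_s$ and $u^*_s$ to $u^\ep_s$ yields
\[
\tfrac1\ep\big(\sigma(\PP_{X^\ep_s})-\sigma(\PP_{X^*_s})\big)=\wt\E\big[\partial_\mu\sigma(\PP_{X^*_s},\wt X^*_s)\wt Z^\ep_s\big]+R^\ep_\sigma(s),
\]
and a similar identity for the increment of $b$, whose linear part reproduces, term by term, the drift of \eqref{eq:Y} (note $\ep^{-1}(u^\ep_s-u^*_s)=u_s-u^*_s$ exactly), with a remainder $R^\ep_b(s)$. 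Each remainder is a difference of $\partial_\mu\sigma$ (resp. of $\partial_xb,\partial_ub,\partial_\mu b$) evaluated at an interior point and at the base point of a segment, paired with $\ep^{-1}(X^\ep_s-X^*_s)$ or with $u_s-u^*_s$; by the Lipschitz estimates (H3)(ii), (H4)(ii)--(iv), by $W_2\big(\PP_{(X^*_s+\lambda(X^\ep_s-X^*_s),\,u^*_s+\lambda\ep(u_s-u^*_s))},\PP_{(X^*_s,u^*_s)}\big)\le C\lambda\ep$ and by the previous step, it follows that $\sup_{s\in[0,T]}\E\big[|R^\ep_\sigma(s)|^2+|R^\ep_b(s)|^2\big]\to0$ as $\ep\dw0$ (with $R^\ep_\sigma$ deterministic). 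Subtracting \eqref{eq:Y} from the equation for $\ep^{-1}(X^\ep-X^*)$ shows that $Z^\ep$ solves a linear mean-field SDE driven by $B^H$ with $\gamma\equiv0$, whose $\d B^H_s$-coefficient is $\wt\E[\partial_\mu\sigma(\PP_{X^*_s},\wt X^*_s)\wt Z^\ep_s]+R^\ep_\sigma(s)$ and whose drift is an affine, boundedly dependent function of $(Z^\ep_s,\PP_{Z^\ep_s})$ plus the source $R^\ep_b(s)$. Estimating $\E[|Z^\ep_t|^2]$ exactly as in the proof of Theorem~\ref{thm-gir-unique} — the Wiener-integral isometry for the deterministic diffusion coefficient, Cauchy--Schwarz for the drift, and propagating the quantity $(\E[|Z^\ep_s|^2])^{p/2}$ for some $p>(H-\tfrac12)^{-1}$ — and applying Gronwall's lemma gives $\sup_{t\in[0,T]}\E[|Z^\ep_t|^2]\le C_T\sup_{s}\E[|R^\ep_\sigma(s)|^2+|R^\ep_b(s)|^2]\to0$, which is \eqref{eq:L2-Y}.

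The main obstacle is the stochastic integral with respect to $B^H$: it becomes tractable precisely because $\gamma\equiv0$ forces all diffusion coefficients to be deterministic, so that the integral reduces to a Wiener integral and the $L^2$-bounds of Section~\ref{section sde} apply verbatim; once that is granted, the remaining work — the a priori $O(\ep^2)$ stability in $\ep$ and the bookkeeping of the measure-derivative Taylor remainders — is routine, if lengthy.
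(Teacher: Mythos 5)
Your overall strategy is the same as the paper's: solvability of \eqref{eq:Y} by viewing it as \eqref{eq:semilinear} with $\gamma\equiv0$ and the data $(X^*,u^*,u)$ packed into $\Theta$ (the paper verifies exactly the bounds you invoke), the a priori estimate $\sup_{t}\E[|X^\ep_t-X^*_t|^2]\le C\ep^2$ by Gronwall, a Taylor expansion of $\sigma$ and $b$ along the segments $X^\theta_s=X^*_s+\theta(X^\ep_s-X^*_s)$, $u^\theta_s$, and a final Gronwall argument on $Z^\ep=\ep^{-1}(X^\ep-X^*)-Y$ with the same exponent $p$ and the same reduction of the $B^H$-integral to a deterministic (Wiener-type) integrand.

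The one step that does not close as written is your claim $\sup_s\E\lt[|R^\ep_b(s)|^2\rt]\to0$. The drift remainder contains the term $\lt(\partial_xb(\PP_{(X^\theta_s,u^\theta_s)},X^\theta_s,u^\theta_s)-\partial_xb(\PP_{(X^*_s,u^*_s)},X^*_s,u^*_s)\rt)\ep^{-1}(X^\ep_s-X^*_s)$, and after using (H4)(iii) its second moment is controlled by $\ep^{-2}\E[|X^\ep_s-X^*_s|^4]$ (both factors involve the same random variable, so Cauchy--Schwarz does not decouple them). Interpolating $\E[|X^\ep-X^*|^2]\le C\ep^2$ against uniformly bounded higher moments only yields $\E[|X^\ep-X^*|^4]\le C\ep^{2\alpha}$ with $\alpha<1$; this gives your stated $\sup_t\E[|X^\ep_t-X^*_t|^4]\to0$ but not $o(\ep^2)$, so $\ep^{-2}\E[|X^\ep-X^*|^4]$ is not shown to vanish. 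The fix is cheap: rerun your Step-2 Gronwall argument directly on fourth moments, using that the $\d B^H$-coefficient $\sigma(\PP_{X^\ep_s})-\sigma(\PP_{X^*_s})$ is deterministic and of size $O(\ep)$ (so the stochastic term is Gaussian with variance $O(\ep^2)$), which gives $\sup_t\E[|X^\ep_t-X^*_t|^4]\le C\ep^4$ and closes the estimate. The paper instead avoids fourth moments altogether: in its Steps 3--4 the remainder is kept as the first-moment quantity $\E\lt[(\hbox{bounded derivative difference})\lt(\ep^{-1}|X^\ep_s-X^*_s|+|u_s-u^*_s|\rt)\rt]$, bounded via Cauchy--Schwarz using only the $O(\ep^2)$ estimate, and sent to zero by the bounded convergence theorem. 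Finally, a typo: in your displayed identity for $\sigma$ the right-hand side should read $\wt{\E}\lt[\partial_\mu\sigma(\PP_{X^*_s},\wt{X}^*_s)\,\ep^{-1}(\wt{X}^\ep_s-\wt{X}^*_s)\rt]+R^\ep_\sigma(s)$ (equivalently, add the term $\wt{\E}[\partial_\mu\sigma(\PP_{X^*_s},\wt{X}^*_s)\wt{Y}_s]$); your subsequent use of the expansion is consistent with the corrected version.
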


\begin{remark}
	With the above convention concerning $\wt{\E}[\cdot]$ we have, 
	$$
	\wt{\E}\lt[\partial_\mu\sigma (\PP_{X^*_s},\wt{X}^*_s)\wt{Y}^*_s\rt]=\E\lt[\partial_\mu \sigma (\PP_{X^*_s},X^*_s)Y_s\rt],
	$$ 
	and
	\begin{equation*}
		\begin{aligned}
		&\wt{\E}\lt[\langle\partial_\mu b(\PP_{(X^*_s,u^*_s)},X^*_s,u^*_s,\wt{X}^*_s,\wt{u}^*_s),(\wt{Y}_s,\wt{u}_s-\wt{u}^*_s)\rangle\rt]\\
		=&\E\lt[\langle\partial_\mu b(\PP_{(X^*_s,u^*_s)},x,v,{X}^*_s,{u}^*_s),({Y}_s,{u}_s-{u}^*_s)\rangle\rt]\big|_{x=X^*_s,v=u^*_s}.
		\end{aligned}
	\end{equation*}

\end{remark}
\begin{proof}(of Lemma \ref{lemma_Y}). The existence and uniqueness of the solution $Y\in L^2([0,$ $T];\R)$ is a special case of \eqref{eq:semilinear}. Indeed, for $\Theta_s=(X^*_s,u^*_s,u_s)$, $s\in[0,T]$, $\eta\in L^2(\Omega,\mathcal{F},\PP)$, $\omega\in\Omega$, we can choose the coefficients in equation \eqref{eq:semilinear} as follows:
\begin{equation*}
\begin{aligned}
 \gamma_s:=& 0;\\ \bar{\sigma}(s,\PP_{(\eta,\Theta_s)}):=&\wt{\E}\lt[\partial_\mu \sigma (\PP_{X^*_s},\wt{X}^*_s)\wt{\eta}\rt];\\
\bar{b}(s,\PP_{(\eta_s,\Theta_s)},\eta,\omega):=&\wt{\E}\lt[\lt\langle\partial_{\mu}b\lt(\PP_{(X_s^*,u^*_s)}, X^*_s(\omega), u^*_s(\omega), \wt{X}_s^*,\wt{u}^*_s\rt),\lt(\wt{\eta}, \wt{u}_s-\wt{u}^*_s\rt)\rt\rangle\rt]\\
&+\partial_xb\lt(\PP_{(X^*_s,u^*_s)}, X_s^*(\omega), u^*_s(\omega)\rt)\eta\\
&+\partial_ub\lt(\PP_{(X^*_s,u^*_s)}, X_s^*(\omega), u^*_s(\omega)\rt)(u_s(\omega)-u_s^*(\omega)),
\end{aligned}
\end{equation*}
for which we have 
\begin{equation*}
\begin{aligned}
	\bar{\sigma}(s,\PP_{(\eta,\Theta_s)})\le&C(1+\E[|\eta|]);\\
|\bar{b}(s,\PP_{(\eta,\Theta_s)},x,\omega)|\leq& C(1+\E[|\eta|]+|x|);\\
\lt|\bar{\sigma}(s,\PP_{(\eta,\Theta_s)})-\wt{\sigma}(s,\PP_{(\eta^\pr,\Theta_s)})\rt|\le &C\wt{\E}\lt[|\eta-\eta^\pr|\rt];\\
\lt|\wt{b}(s,\PP_{(\eta,\Theta_s)},x,\omega)-\wt{b}(s,\PP_{(\eta^\pr,\Theta_s)},x^\pr,\omega)\rt|\le &C\lt(\wt{\E}\lt[|\eta-\eta^\pr|\rt]+|x-x^\pr|\rt),
\end{aligned}
\end{equation*}
for all $s\in[0,T]$, $\omega\in\Omega$, $\eta,\eta^\pr\in L^2(\Omega,\F,\PP)$ and $x, x^\pr\in\R$.
Then the result follows.

	The proof of (\ref{eq:L2-Y}) is split into 5 steps.
	
\textbf{Step 1.}
Following the same method in the proof of Theorem \ref{thm-gir-unique}, the only difference in the argument consists in Step 4 of the proof, where we have to take into account that we have now different $\Theta_s$'s. Recall also that, as $\gamma=0$ here, $\ep_t=1, t\in[0,T]$. Thus, we obtain 
\begin{equation*}
\begin{aligned}
\lt(\E\lt[\lt|X_t^*-X_t^\ep\rt|^2\rt]\rt)^{\frac p2}\le &C \int^t_0\lt(\E\lt[\lt|X^*_s-X^\ep_s\rt|^2+|u_s^*-u_s^\ep|^2\rt]\rt)^{\frac p2}\d s\\
\le &C\int^t_0\lt(\E\lt[\lt|X^*_s-X_s^\ep\rt|^2\rt]\rt)^{\frac p2}\d s+C\int^t_0\lt(\E\lt[\ep^2\lt|u^*_s-u_s\rt|^2\rt]\rt)^{\frac p2}\d s\\
\le &C\int^t_0\lt(\E\lt[\lt|X^*_s-X_s^\ep\rt|^2\rt]\rt)^{\frac p2}\d s+C\ep^p.
\end{aligned}
\end{equation*}
From Gronwall's inequality, we get
\begin{equation*}
\begin{aligned}
\lt(\E\lt[\lt|X_t^*-X_t^\ep\rt|^2\rt]\rt)^{\frac p2}\le C\ep^p.
\end{aligned}
\end{equation*}
Hence, we deduce that
\begin{equation}
\label{eq:Xstar-ep}
\begin{aligned}
\E\lt[\lt|X_t^*-X_t^\ep\rt|^2\rt]\le C\ep^2, \, t\in[0,T].
\end{aligned}
\end{equation}
This yields that, as $\ep\to 0$, $X^\ep_t$ converges to $X^*_t$ in $L^2$, whence $X^\ep_t$  also converges to $X^*_t$ in probability.
  
	\textbf{Step 2.}
Let $Y_t$ be the solution of equation (\ref{eq:Y}), we want to prove that 
$$
\lim_{\ep\downarrow0}\sup_{t\in[0,T]}\E\lt[\lt|\frac{X^\ep_t-X^*_t}{\ep}-Y_t\rt|^2\rt]=0.
$$
Indeed, we have
\begin{equation}
\label{eq:ep-Y}
\begin{aligned}
&\frac{X^\ep_t-X^*_t}{\ep}-Y_t
=\int^t_0\lt\{\frac{1}{\ep}\lt[\sigma(\PP_{X^\ep_s})-\sigma(\PP_{X_s^*})\rt]-\wt{\E}\lt[\partial_{\mu}\sigma\lt(\PP_{X_s^\ep},\wt{X}_s^*\rt) \wt{Y_s}\rt]\rt\}\d B^H_s\\
&\qquad\qquad+\int^t_0\bigg\{\frac{1}{\ep}\lt[b\lt(\PP_{(X^\ep_s,u^\ep_s)},X^\ep_s,u^\ep\rt)-b(\PP_{(X^*_s,u^*_s)}, X^*_s,u^*_s)\rt]\\
&\quad\qquad\qquad-\wt{\E}\lt[\lt\langle\partial_{\mu}b\lt(\PP_{(X_s^*,u^*_s)}, X^*_s, u^*_s, \wt{X}_s^*,\wt{u}^*_s\rt),\lt(\wt{Y}_s, \wt{u}_s-\wt{u}^*_s\rt)\rt\rangle\rt]\\
&\quad \qquad\qquad-\partial_xb\lt(\PP_{(X^*
	_s,u^*_s)}, X_s^*, u^*_s\rt)Y_s-\partial_ub\lt(\PP_{(X^*_s,u^*_s)}, X_s^*, u^*_s\rt)(u_s-u_s^*)\bigg\}\d s.
\end{aligned}
\end{equation}

In what follows, for simplicity of notations and for $\theta\in[0,1]$, we denote $X_s^*+\theta(X^\ep_s-X^*_s)$ by $Z^\theta_s$, and $u_s^*+\theta(u^\ep_s-u^*_s)$ by $v^\theta_s$. Then by applying the chain rule for Fr\'echet derivatives and the definition of derivative with respect to $\mu$ (Section 2.5), we can write
\begin{equation}
\label{eq:ep-sigma}
\begin{aligned}
&\frac{1}{\ep}\lt[\sigma(\PP_{X^\ep_s})-\sigma(\PP_{X^*_s})\rt]\\
=&\frac{1}{\ep}\int^1_0\partial_{\theta}\lt[\sigma\lt(\PP_{X_s^*+\theta(X^\ep_s-X^*_s)}\rt)\rt]\d \theta
=\frac{1}{\ep}\int^1_0\partial_{\theta}\lt[\wt{\sigma}(X_s^*+\theta(X^\ep_s-X^*_s))\rt]\d \theta\\
=&\frac{1}{\ep}\int^1_0\lt(D\wt{\sigma}\rt)(X_s^*+\theta(X^\ep_s-X^*_s)(\wt{X}^\ep_s-\wt{X}^*_s)\d \theta
=\int^1_0\wt{\E}\lt[\partial_{\mu}\sigma\lt(\PP_{Z_s^\theta},\wt{Z}_s^\theta\rt)\frac{\wt{X}^\ep_s-\wt{X}^*_s}{\ep}\rt]\d \theta,
\end{aligned}
\end{equation}
and
\begin{equation}
\label{eq:ep-b}
\begin{aligned}
&\frac{1}{\ep}\lt[b(\PP_{(X^\ep_s,u^\ep_s)},X^\ep_s,u^\ep_s)-b(\PP_{(X^*_s,u^*_s)},X^*_s,u^*_s)\rt]\\
=&\frac{1}{\ep}\int^1_0\partial_{\theta}\lt[b\lt(\PP_{(Z_s^\theta,v_s^\theta)},Z_s^\theta,v_s^\theta\rt)\rt]\d \theta\\
=&\int^1_0\Bigg\{\wt{\E}\bigg[\bigg\langle\partial_{\mu}b\big(\PP_{(Z_s^\theta,v_s^\theta)},Z_s^\theta,v_s^\theta,\wt{Z}_s^\theta,\wt{v}_s^\theta\big),\lt(\frac{\wt{X}^\ep_s-\wt{X}^*_s}{\ep}, \wt{u}_s-\wt{u}^*_s\rt)\bigg\rangle\bigg]\\
&+\partial_xb\big(\PP_{(Z_s^\theta,v_s^\theta)},Z_s^\theta,v_s^\theta\big)\frac{\wt{X}^\ep_s-\wt{X}^*_s}{\ep}+\partial_ub\big(\PP_{(Z_s^\theta,v_s^\theta)}Z_s^\theta,v_s^\theta\big)(u_s-u^*_s)
\Bigg\}\d \theta.
\end{aligned}
\end{equation}
Substituting equations (\ref{eq:ep-sigma}) and (\ref{eq:ep-b}) into equation (\ref{eq:ep-Y}), we get
\begin{equation}
\label{eq:Yep-Y}
\begin{aligned}
&\frac{X^\ep_t-X^*_t}{\ep}-Y_t\\
=&\int^t_0\int^1_0\wt{\E}\lt[\partial_{\mu}\sigma\lt(\PP_{Z_s^\theta},\wt{Z}_s^\theta\rt)\frac{\wt{X}^\ep_s-\wt{X}^*_s}{\ep}-\partial_{\mu}\sigma\lt(\PP_{X_s^*},\wt{X}_s^*\rt) \wt{Y_s}\rt]\d \theta\d B^H_s\\
&+\int^t_0\int^1_0\Bigg\{\wt{\E}\bigg[\bigg\langle\partial_{\mu}b\big(\PP_{(Z_s^\theta,v_s^\theta)},Z_s^\theta,v_s^\theta,\wt{Z}_s^\theta,\wt{v}_s^\theta\big),\lt(\frac{\wt{X}^\ep_s-\wt{X}^*_s}{\ep}, \wt{u}_s-\wt{u}^*_s\rt)\bigg\rangle\bigg]\\
&\qquad+\partial_xb\big(\PP_{(Z_s^\theta,v_s^\theta)},Z_s^\theta,v_s^\theta\big)\frac{\wt{X}^\ep_s-\wt{X}^*_s}{\ep}+\partial_ub\big(\PP_{(Z_s^\theta,v_s^\theta)},Z_s^\theta,v_s^\theta\big)(u_s-u^*_s)\\
&\qquad-\wt{\E}\lt[\lt\langle\partial_{\mu}b\lt(\PP_{(X_s^*,u^*_s)}, X^*_s, u^*_s, \wt{X}_s^*,\wt{u}^*_s\rt),\lt(\wt{Y}_s, \wt{u}_s-\wt{u}^*_s\rt)\rt\rangle\rt]\\
&\qquad-\partial_xb\lt(\PP_{(X^*
	_s,u^*_s)}, X_s^*, u^*_s\rt)Y_s-\partial_ub\lt(\PP_{(X^*_s,u^*_s)}, X_s^*, u^*_s\rt)(u_s-u_s^*)\Bigg\}\d \theta\d s\\
=&I_6(t)+I_7(t).
\end{aligned}
\end{equation}
\textbf{Step 3. The term $I_6(t)$.} 
Recall that
\begin{equation*}
\begin{aligned}
I_6(t)=&\int^t_0\int^1_0\wt{\E}\lt[\lt(\partial_{\mu}\sigma\lt(\PP_{Z_s^\theta},\wt{Z}_s^\theta\rt)-\partial_{\mu}\sigma\lt(\PP_{X_s^*},\wt{X}_s^*\rt)\rt) \frac{\wt{X}^\ep_s-\wt{X}^*_s}{\ep}\rt]\d \theta\d B^H_s\\
&+\int^t_0\int^1_0\wt{\E}\lt[\partial_{\mu}\sigma\lt(\PP_{X_s^*},\wt{X}_s^*\rt)\lt(\frac{\wt{X}^\ep_s-\wt{X}^*_s}{\ep}-\wt{Y}_s\rt)\rt]\d \theta\d B^H_s.
\end{aligned}
\end{equation*}
By applying the same method and the same $p$ as in the proof of Theorem \ref{thm-gir-unique} (with $\gamma\equiv0$), observing that the integrands with respect to $B^H$ are deterministic, we get 
\begin{equation}
\label{eq:I4}
\begin{aligned}
\E\lt[I^2_6(t)\rt]\le& C\Bigg\{\int^t_0\Bigg|\E\bigg[\int^1_0\lt(\partial_{\mu}\sigma\lt(\PP_{Z_s^\theta},{X}_s^\theta\rt)-\partial_{\mu}\sigma\lt(\PP_{X_s^*},{X}_s^*\rt)\rt) \frac{{X}^\ep_s-{X}^*_s}{\ep}\d \theta\bigg]\Bigg|^p\d s\Bigg\}^{\frac2p}\\
&+C\bigg\{\int^t_0\Bigg|\E\bigg[\int^1_0\partial_{\mu}\sigma\lt(\PP_{X_s^*},{X}_s^*\rt) \bigg(\frac{{X}^\ep_s-{X}^*_s}{\ep}-Y_s\bigg)\d \theta\bigg]\Bigg|^p\d s\Bigg\}^{\frac2p}\\
\le& C\Bigg\{\int^t_0\lt(\E\lt[\lt(\lt(\E\lt[|X^\ep_s-X_s^*|^2\rt]\rt)^{\frac12}+|X^\ep_s-X_s^*|\rt)\lt|\frac{X^\ep_s-X_s^*}{\ep}\rt|\rt]\rt)^p\d s\Bigg\}^\frac{2}{p}\\
&+C\lt(\int^t_0\lt(\E\lt[\lt|\frac{X^\ep_s-X_s^*}{\ep}-Y_s\rt|\rt]\rt)^p\d s\rt)^{\frac{2}{p}}.
\end{aligned}
\end{equation}
Hence, from the results of \textbf{Step 1}, we deduce from the bounded convergence theorem that the first term of right hand side in the above inequality  converges to $0$ as $\ep\to 0$.

\textbf{Step 4. The term $I_7(t)$.} We can write $I_7(t)$ as follows:
\begin{equation*}
\begin{aligned}
I_7(t)=&\int^t_0\int^1_0\Bigg\{\wt{\E}\bigg[\bigg\langle\partial_{\mu}b\big(\PP_{(Z_s^\theta,v_s^\theta)},Z_s^\theta,v_s^\theta,\wt{Z}_s^\theta,\wt{v}_s^\theta\big)-\partial_{\mu}b\lt(\PP_{(X_s^*,u^*_s)}, X^*_s, u^*_s, \wt{X}_s^*,\wt{u}^*_s\rt),\\
&\qquad\qquad\qquad\lt(\frac{\wt{X}^\ep_s-\wt{X}^*_s}{\ep}, \wt{u}_s-\wt{u}^*_s\rt)\bigg\rangle\bigg]\\
&+\wt{\E}\lt[\lt\langle\partial_{\mu}b\lt(\PP_{(X_s^*,u^*_s)}, X^*_s, u^*_s, \wt{X}_s^*,\wt{u}^*_s\rt),\lt(\frac{\wt{X}^\ep_s-\wt{X}^*_s}{\ep}-\wt{Y}_s,0\rt)\rt\rangle\rt]\\
&+\lt(\partial_xb\lt(\PP_{(Z_s^\theta,v_s^\theta)},Z_s^\theta,v_s^\theta\rt)-\partial_xb\lt(\PP_{(X^*_s,u^*_s)}, X_s^*, u^*_s\rt)\rt)\frac{{X}^\ep_s-{X}^*_s}{\ep}\\
&+\partial_xb\lt(\PP_{(X^*
	_s,u^*_s)}, X_s^*, u^*_s\rt)\lt(\frac{{X}^\ep_s-{X}^*_s}{\ep}-{Y}_s\rt)\\
&+\lt(\partial_ub\big(\PP_{(Z_s^\theta,v_s^\theta)}Z_s^\theta,v_s^\theta\big)-\partial_ub\lt(\PP_{(X^*_s,u^*_s)}, X_s^*, u^*_s\rt)\rt)(u_s-u^*_s)\Bigg\}\d \theta\d s.
\end{aligned}
\end{equation*}
From our assumptions, we have
\begin{equation}
\label{eq:I5}
\begin{aligned}
&\E[I_7^2(t)]\\&\le C\Bigg\{\int^t_0\bigg(\E\Bigg[\lt(\lt(\E\lt[|X^\ep_s-X_s^*|^2\rt]\rt)^{\frac12}+|X^\ep_s-X_s^*|+\lt(\E[|u^\ep_s-u^*_s|^2]\rt)^{\frac12
	}+|u^\ep_s-u^*_s|\rt)\\
&\times\bigg(\lt|\frac{X^\ep_s-X_s^*}{\ep}\rt|+|u_s-u^*_s|\bigg)\Bigg]\bigg)^p\d s\Bigg\}^\frac{2}{p}+C\lt(\int^t_0\lt(\E\lt[\lt|\frac{X^\ep_s-X_s^*}{\ep}-Y_s\rt|\rt]\rt)^p\d s\rt)^{\frac{2}{p}}.
\end{aligned}
\end{equation} 
From \textbf{Step 1}, we get by the bounded convergence theorem that the first term of (\ref{eq:I5}) converges to zero when $\ep\to 0$. 

\textbf{Step 5.} 
Combining the relations (\ref{eq:Yep-Y}), (\ref{eq:I4}) and (\ref{eq:I5}), we deduce that
\begin{equation*}
\E\lt[\lt|\frac{X^\ep_t-X^*_t}{\ep}-Y_t\rt|^2\rt]\le C_\ep +C\lt(\int^t_0\lt(\E\lt[\lt|\frac{X^\ep_s-X_s^*}{\ep}-Y_s\rt|\rt]\rt)^p\d s\rt)^{\frac{2}{p}},\ t\in[0,T],
\end{equation*}
where $C_\ep\to 0$ as $\ep\to 0$.  Consequently, for $t\in[0,T],$
\begin{equation*}
\sup_{r\in[0,t]}\lt(\E\lt[\lt|\frac{X^\ep_r-X^*_r}{\ep}-Y_r\rt|^2\rt]\rt)^{\frac p2}\le C_\ep +C\int^t_0\lt(\E\lt[\lt|\frac{X^\ep_s-X_s^*}{\ep}-Y_s\rt|^2\rt]\rt)^{\frac p2}\d s, 
\end{equation*}
and Gronwall's inequality allows to conclude that
\begin{equation*}
\sup_{t\in[0,T]}\lt(\E\lt[\lt|\frac{X^\ep_t-X^*_t}{\ep}-Y_t\rt|^2\rt]\rt)^{\frac p2}\le C_\ep\to 0,\ \textrm{as}\ \ep\to 0.
\end{equation*}
This proves that $\frac{X^\ep_t-X^*_t}{\ep}$ converges to $Y_t$ in $L^2$, uniformly in $t\in[0,T]$. Our proof is completed.
\end{proof}

As $(X^*,u^*)$ is an optimal pair, $J(u^\ep)\ge J(u^*), \ep\in[0,1]$, and therefore, 
\begin{equation}
\label{eq:dJ}
\frac{\d}{\d \ep}J(u^\ep)\bigg|_{\ep=0}
:=\lim_{0<\ep\downarrow 0}\frac 1\ep(J(u^\ep)-J(u^*))\ge 0.
\end{equation}
 That is, due to Lemma \ref{lemma_Y} and the computations on $f$ and $g$,
\begin{equation}
\label{eq:dJ1}
\begin{aligned}
&0\le \frac{\d}{\d \ep}J(u^\ep)\bigg|_{\ep=0}=\E\lt[\partial_x g(X^*_T,\PP_{X^*_T})Y_T\rt]+\E\lt[\wt{\E}\lt[\partial_\mu g(X^*_T,\PP_{X^*_T},\wt{X}^*_T)\wt{Y}_T\rt]\rt]\\
&+\E\lt[\int^T_0\partial_x f(\PP_{(X^*_t,u^*_t)},X^*_t,u^*_t)Y_t\d t\rt]\\
&+\E\lt[\int^T_0\wt{\E}\lt[(\partial_\mu f)_1( \PP_{(X^*_t,u^*_t)},X^*_t,u^*_t,\wt{X}^*_t,\wt{u}^*_t)\wt{Y}_t\rt]\d t\rt]\\
&+\E\lt[\int^T_0\wt{\E}\lt[(\partial_\mu f)_2( \PP_{(X^*_t,u^*_t)},X^*_t,u^*_t,\wt{X}^*_t,\wt{u}^*_t)(\wt{u}_t-\wt{u}^*_t)\rt]\d t\rt]\\
&+\E\lt[\int^T_0\partial_u f( \PP_{(X^*_t,u^*_t)},X^*_t,u^*_t)(u_t-{u}^*_t)\d t\rt].
\end{aligned}
\end{equation}
We define an adjoint process $P_t$ by
\begin{equation}
\label{eq:p}
P_t=P_T-\int^T_t\alpha_s\d s-\int^T_t \beta_s\d W_s,
\end{equation}
where  $\{\alpha_s, s\in[0,T]$ is a square integrable progressively measurable process, which will be specified later,  $P_T=\partial_x g(X^*_T,\PP_{X^*_T})+\wt{\E}\lt[\partial_\mu g(\wt{X^*_T},\PP_{X^*_T},X^*_T)\rt]$, and $(P=(P_t)_{t\in[0,T]},$  $\beta=(\beta_t)_{t\in[0,T]})$ is the solution of the backward equation (\ref{eq:p}). In particular,  $P=(P_t)_{t\in[0,T]}$ and $\beta=(\beta_t)_{t\in[0,T]}$ are square integrable, progressively measurable processes.

We apply Ito's formula (Corollary \ref{corol}) to $Y_tP_t$ and get
\begin{equation}
\label{eq:dYtPt}
\begin{aligned}
\d Y_tP_t=&P_t\d Y_t+Y_t\d P_t+ \wt{\E}\lt[\partial_{\mu}\sigma\lt(\PP_{X^*_t}, \wt{X}_t^*\rt)\wt{Y}_t\rt]\mathbb{D}^H_tP_t\d t\\
=&P_t\wt{\E}\lt[\partial_{\mu}\sigma\lt(\PP_{X^*_t}, \wt{X}_t^*\rt)\wt{Y}_t\rt]\d B_t^H\\
&+P_t\wt{\E}\lt[\lt\langle\partial_{\mu}b\lt(\PP_{(X_t^*,u^*_t)}, X^*_t, u^*_t, \wt{X}_t^*,\wt{u}^*_t\rt),\lt(\wt{Y}_t, \wt{u}_t-\wt{u}^*_t\rt)\rt\rangle\rt]\d t\\
&+P_t\partial_xb\lt(\PP_{(X^*_t,u^*_t)}, X_t^*, u^*_t\rt)Y_t\d t +P_t\partial_ub\lt(\PP_{(X^*_t,u^*_t)}, X_t^*, u^*_t\rt)(u_t-u_t^*)\d t\\
&+Y_t\alpha_t\d t +Y_t\beta_t\d W_t+\wt{\E}\lt[\partial_{\mu}\sigma\lt(\PP_{X^*_t}, \wt{X}_t^*\rt)\wt{Y}_t\rt]\mathbb{D}^H_tP_t\d t.
\end{aligned}
\end{equation}
Therefore, by integrating over the interval $[0,T]$ and considering that $Y_0=0$, we have
\begin{equation}
\label{eq:YtPt}
\begin{aligned}
Y_TP_T
=&\int_0^TP_s\wt{\E}\lt[\partial_{\mu}\sigma\lt(\PP_{X^*_s}, \wt{X}_s^*\rt)\wt{Y}_s\rt]\d B_s^H
+\int^T_0Y_s\beta_s\d W_s\\
&+\int^T_0 P_s\wt{\E}\lt[\lt\langle\partial_{\mu}b\lt(\PP_{(X_s^*,u^*_s)}, X^*_s, u^*_s, \wt{X}_s^*,\wt{u}^*_s\rt),\lt(\wt{Y}_s, \wt{u}_s-\wt{u}^*_s\rt)\rt\rangle\rt]\d s\\
&+\int^T_0 P_s\partial_xb\lt(\PP_{(X^*_s,u^*_s)}, X_s^*, u^*_s\rt)Y_s\d s \\
&+\int^t_0 P_s\partial_ub\lt(\PP_{(X^*_s,u^*_s)}, X_s^*, u^*_s\rt)(u_s-u_s^*)\d s\\
&+\int^T_0 Y_s\alpha_s\d s 
+\int^T_0\wt{\E}\lt[\partial_{\mu}\sigma\lt(\PP_{X^*_s}, \wt{X}_s^*\rt)\wt{Y}_s\rt]\mathbb{D}^H_sP_s\d s.
\end{aligned}
\end{equation}
By taking expectations with respect to $\PP$, we get
\begin{equation}
\label{eq:EYtPt}
\begin{aligned}
\E\lt[Y_TP_T\rt]
=&\int^T_0\E\lt[P_s\wt{\E}\lt[\lt\langle\partial_{\mu}b\lt(\PP_{(X_s^*,u^*_s)}, X^*_s, u^*_s, \wt{X}_s^*,\wt{u}^*_s\rt),\lt(\wt{Y}_s, \wt{u}_s-\wt{u}^*_s\rt)\rt\rangle\rt]\rt]\d s\\
&+\int^T_0\E\lt[ P_s\partial_xb\lt(\PP_{(X^*_s,u^*_s)}, X_s^*, u^*_s\rt)Y_s\rt]\d s \\
&+\int^T_0 \E\lt[P_s\partial_ub\lt(\PP_{(X^*_s,u^*_s)}, X_s^*, u^*_s\rt)(u_s-u_s^*)\rt]\d s\\
&+\int^T_0\E\lt[ Y_s\alpha_s\rt]\d s 
+\int^T_0\E\lt[\wt{\E}\lt[\partial_{\mu}\sigma\lt(\PP_{X^*_s}, \wt{X}_s^*\rt)\wt{Y}_s\rt]\mathbb{D}^H_sP_s\rt]\d s.
\end{aligned}
\end{equation}
From the definition of $(\wt{\Omega},\wt{\mathcal{F}},\wt{\PP})$, it follows that
\begin{equation}
\label{eq:EYtPt1}
\begin{aligned}
&\int^T_0\E\lt[P_s\wt{\E}\lt[\lt\langle\partial_{\mu}b\lt(\PP_{(X_s^*,u^*_s)}, X^*_s, u^*_s, \wt{X}_s^*,\wt{u}^*_s\rt),\lt(\wt{Y}_s, \wt{u}_s-\wt{u}^*_s\rt)\rt\rangle\rt]\rt]\d s\\
=&\int^T_0\E\lt[\wt{\E}\lt[\wt{P}_s\lt\langle\partial_{\mu}b\lt(\PP_{(X_s^*,u^*_s)},\wt{X}^*_s, \wt{u}^*_s, {X}_s^*,{u}^*_s\rt),\lt({Y}_s, {u}_s-{u}^*_s\rt)\rt\rangle\rt]\rt]\d s\\
=&\int^T_0\E\lt[\wt{\E}\lt[\wt{P}_s(\partial_{\mu}b)_1\lt(\PP_{(X_s^*,u^*_s)},\wt{X}^*_s, \wt{u}^*_s, {X}_s^*,u^*_s\rt){Y}_s\rt]\rt]\d s\\
&+\int^T_0\E\lt[\wt{\E}\lt[\wt{P}_s(\partial_{\mu}b)_2\lt(\PP_{(X_s^*,u^*_s)},\wt{X}^*_s, \wt{u}^*_s, X^*_s, {u}_s^*\rt)({u}_s-{u}^*_s)\rt]\rt]\d s.
\end{aligned}
\end{equation}
Moreover, for the latter term of equation (\ref{eq:EYtPt}), we have 
\begin{equation}
\label{eq:EYtPt2}
\begin{aligned}
&\int^T_0\E\lt[\wt{\E}\lt[\partial_{\mu}\sigma\lt(\PP_{X^*_s}, \wt{X}_s^*\rt)\wt{Y}_s\rt]\mathbb{D}^H_sP_s\rt]\d s
=\int^T_0\E\lt[\partial_{\mu}\sigma\lt(\PP_{X^*_s}, {X}_s^*\rt){Y}_s{\E}\lt[{\mathbb{D}}^H_s{P}_s\rt]\rt]\d s
\end{aligned}
\end{equation}

Substituting equations (\ref{eq:EYtPt1}) and (\ref{eq:EYtPt2}) in equation (\ref{eq:EYtPt}), we obtain
 \begin{equation}
 \label{eq:EYtPtfinal}
 \begin{aligned}
 &\E\lt[Y_TP_T\rt]\\
 =&\int^T_0\E\bigg[Y_s\bigg\{\wt{\E}\lt[\wt{P}_s(\partial_{\mu}b)_1\lt(\PP_{(X_s^*,u^*_s)},\wt{X}^*_s, \wt{u}^*_s, {X}_s^*\rt)\rt]+ P_s\partial_xb\lt(\PP_{(X^*_s,u^*_s)}, X_s^*, u^*_s\rt)\\
 &\qquad\qquad+\alpha_s+\partial_{\mu}\sigma\lt(\PP_{X^*_s}, {X}_s^*\rt){\E}\lt[{\mathbb{D}}^H_s{P}_s\rt]\bigg\}\bigg]\d s\\ &+\int^T_0\E\bigg[\bigg(\wt{\E}\lt[\wt{P}_s(\partial_{\mu}b)_2\lt(\PP_{(X_s^*,u^*_s)},\wt{X}^*_s, \wt{u}^*_s, {u}_s^*\rt)\rt]\\
 &\qquad +P_s\partial_ub\lt(\PP_{(X^*_s,u^*_s)}, X_s^*, u^*_s\rt)\bigg)({u}_s-{u}^*_s)\bigg]\d s.
 \end{aligned}
 \end{equation}
Now we substitute equation (\ref{eq:dJ1}) in (\ref{eq:EYtPtfinal}), and get
   \begin{equation}
   \label{eq:EYtPtfinal1}
   \begin{aligned}
  0\le 
&\int^T_0\E\bigg[Y_s\bigg\{\wt{\E}\lt[\wt{P}_s(\partial_{\mu}b)_1\lt(\PP_{(X_s^*,u^*_s)},\wt{X}^*_s, \wt{u}^*_s, {X}_s^*\rt)\rt]+ P_s\partial_xb\lt(\PP_{(X^*_s,u^*_s)}, X_s^*, u^*_s\rt)+\alpha_s\\
   &+\partial_{\mu}\sigma\lt(\PP_{X^*_s}, {X}_s^*\rt){\E}\lt[{\mathbb{D}}^H_s{P}_s\rt]+\partial_x f(\PP_{(X^*_s,u^*_s)},X^*_s,u^*_s)\\
   &+\wt{\E}\lt[(\partial_\mu f)_1( \PP_{(X^*_s,u^*_s)},\wt{X}^*_s,\wt{u}^*_s,X^*_s,u^*_s)\rt]\bigg\}\bigg]\d s\\
   &+\int^T_0\E\bigg[(u_s-{u}^*_s)\bigg\{\wt{\E}\lt[(\partial_\mu f)_2( \PP_{(X^*_s,u^*_s)},\wt{X}^*_s,\wt{u}^*_s,X^*_s,u^*_s)\rt]+\partial_u f( \PP_{(X^*_s,u^*_s)},X^*_s,u^*_s)\\
   &+\wt{\E}\lt[\wt{P}_s(\partial_{\mu}b)_2\lt(\PP_{(X_s^*,u^*_s)},\wt{X}^*_s, \wt{u}^*_s, X^*_s, {u}_s^*\rt)\rt] +P_s\partial_ub\lt(\PP_{(X^*_s,u^*_s)}, X_s^*, u^*_s\rt)\bigg\}\bigg]\d s.\\
   \end{aligned}
   \end{equation}
    Letting the first integral, which integrand contains $Y_s$, equal to zero, we get
    \begin{equation}
    \label{eq:alpha}
    \begin{aligned}
    \alpha_s=&-\wt{\E}\lt[\wt{P}_s(\partial_{\mu}b)_1\lt(\PP_{(X_s^*,u^*_s)},\wt{X}^*_s, \wt{u}^*_s, {X}_s^*\rt)\rt]- P_s\partial_xb\lt(\PP_{(X^*_s,u^*_s)}, X_s^*, u^*_s\rt)\\
    &-\partial_{\mu}\sigma\lt(\PP_{X^*_s}, {X}_s^*\rt){\E}\lt[{\mathbb{D}}^H_s{P}_s\rt]-\partial_x f(\PP_{(X^*_s,u^*_s)},X^*_s,u^*_s)\\
    &-\wt{\E}\lt[(\partial_\mu f)_1( \PP_{(X^*_s,u^*_s)},\wt{X}^*_s,\wt{u}^*_s,X^*_s,u^*_s)\rt].
    \end{aligned}
    \end{equation}
    This gives the following form of the BSDE for $ P_t=P_T-\int^T_t\alpha_s\d s -\int^T_t \beta_s\d W_s$:
    \begin{equation}
    \label{eq:Pt}
    \begin{aligned}
    P_t =&P_T+\int^T_t\bigg\{\wt{\E}\lt[\wt{P}_s(\partial_{\mu}b)_1\lt(\PP_{(X_s^*,u^*_s)},\wt{X}^*_s, \wt{u}^*_s, {X}_s^*,u^*_s\rt)\rt]+ P_s\partial_xb\lt(\PP_{(X^*_s,u^*_s)}, X_s^*, u^*_s\rt)\\
    &\qquad\qquad\quad+\partial_{\mu}\sigma\lt(\PP_{X^*_s}, {X}_s^*\rt){\E}\lt[{\mathbb{D}}^H_s{P}_s\rt]+\partial_x f(\PP_{(X^*_s,u^*_s)},X^*_s,u^*_s)\\
    &\qquad\qquad\quad+\wt{\E}\lt[(\partial_\mu f)_1( \PP_{(X^*_s,u^*_s)},\wt{X}^*_s,\wt{u}^*_s,X^*_s,u^*_s)\rt]\bigg\}\d s -\int^T_t \beta_s\d W_s,
    \end{aligned}
    \end{equation}
    which is a mean-field BSDE driven by the standard Brownian motion $W$. Such kind of mean-field BSDE (without the term of Malliavin derivative) was studied firstly by Buckdahn et al. \cite{BDLP} \cite{BLP}.  We recall again that in the above BSDE, the expectation $\wt{\E}$ only concerns the processes with tildes.
    
    We suppose that there exists such a solution $(P,\beta)$ of (\ref{eq:Pt}); its existence and uniqueness will be discussed later for a special case.
   
   With this choice of $P_t$, equation (\ref{eq:EYtPtfinal1}) now becomes
\begin{equation}
   \begin{aligned}
   0\le&\int^T_0\E\bigg[(u_s-{u}^*_s)\bigg\{\wt{\E}\lt[(\partial_\mu f)_2( \PP_{(X^*_s,u^*_s)},\wt{X}^*_s,\wt{u}^*_s,X^*_s,u^*_s)\rt]+\partial_u f( \PP_{(X^*_s,u^*_s)},X^*_s,u^*_s)\\
   &+\wt{\E}\lt[\wt{P}_s(\partial_{\mu}b)_2\lt(\PP_{(X_s^*,u^*_s)},\wt{X}^*_s, \wt{u}^*_s, X^*_s, {u}_s^*\rt)\rt] +P_s\partial_ub\lt(\PP_{(X^*_s,u^*_s)}, X_s^*, u^*_s\rt)\bigg\}\bigg]\d s.\\
\end{aligned}
\end{equation}   
   From the fact that $U$ is open and from the arbitrariness of $u\in \mathcal{U}([0,T])$, we have 
   \begin{equation}
   \label{eq:hamiltonian}
   \begin{aligned}
   0=&\wt{\E}\lt[(\partial_\mu f)_2( \PP_{(X^*_t,u^*_t)},\wt{X}^*_t,\wt{u}^*_t,X^*_t,u^*_t)\rt]+\partial_u f( \PP_{(X^*_t,u^*_t)},X^*_t,u^*_t)\\
   &+\wt{\E}\lt[\wt{P}_t(\partial_{\mu}b)_2\lt(\PP_{(X_t^*,u^*_t)},\wt{X}^*_t, \wt{u}^*_t, X^*_t, {u}_t^*\rt)\rt]+ P_t\partial_ub\lt(\PP_{(X^*_t,u^*_t)}, X^*_t, u^*_t\rt),
   \end{aligned}
   \end{equation}
  $\d \PP\textrm{-a.s.}, \d t\textrm{-a.e}.$ 
  
  Now we can conclude the above calculations in the following necessary conditions of  Pontryagin-type maximum principle, which is our main result.
  \begin{theorem}
  	\label{thm-main}
  	If $(X^*,u^*)$ is an optimal pair of mean-field stochastic control problem $(\ref{eq:control})-(\ref{eq:optimal cost})$, then $(X^*,u^*)$ satisfies the following system:
    \begin{equation}
    \label{eq:necessary}
    \begin{cases}
    \begin{aligned}
    	X_t^*=&x+\int^t_0\sigma(\PP_{X_s^*})\d B^H_s+\int^t_0 b(\PP_{(X_s^*, u_s^*)},X_s^*, u_s^*)\d s,\\ 
    	P_T=&\partial_x g(X^*_T,\PP_{X^*_T})+\wt{\E}\lt[\partial_\mu g(\wt{X^*_T},\PP_{X^*_T},X^*_T)\rt],\\
    	P_t =&P_T-\int^T_t \beta_s\d W_s+\int^T_t\bigg\{\wt{\E}\lt[\wt{P}_s(\partial_{\mu}b)_1\lt(\PP_{(X_s^*,u^*_s)},\wt{X}^*_s, \wt{u}^*_s, {X}_s^*,u^*_s\rt)\rt]\\&+ P_s\partial_xb\lt(\PP_{(X^*_s,u^*_s)}, X_s^*, u^*_s\rt)+\partial_{\mu}\sigma\lt(\PP_{X^*_s}, {X}_s^*\rt){\E}\lt[{\mathbb{D}}^H_s{P}_s\rt]\\
    	&+\partial_x f(\PP_{(X^*_s,u^*_s)},X^*_s,u^*_s)+\wt{\E}\lt[(\partial_\mu f)_1( \PP_{(X^*_s,u^*_s)},\wt{X}^*_s,\wt{u}^*_s,X^*_s,u^*_s)\rt]\bigg\}\d s ,\\
    	0=&\wt{\E}\lt[\wt{P}_t(\partial_{\mu}b)_2\lt(\PP_{(X_t^*,u^*_t)},\wt{X}^*_t, \wt{u}^*_t, X^*_t, {u}_t^*\rt)\rt]+ P_t\partial_ub\lt(\PP_{(X^*_t,u^*_t)}, X^*_t, u^*_t\rt)\\
    	&+\wt{\E}\lt[(\partial_\mu f)_2( \PP_{(X^*_t,u^*_t)},\wt{X}^*_t,\wt{u}^*_t,X^*_t,u^*_t)\rt]+\partial_u f( \PP_{(X^*_t,u^*_t)},X^*_t,u^*_t),\\
    	&\d \PP\textrm{-a.s.}, \d t\textrm{-a.e}.
    \end{aligned}
    \end{cases}  	
    \end{equation}
  
  \end{theorem}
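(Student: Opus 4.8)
The plan is to assemble the system \eqref{eq:necessary} by combining the variational analysis already developed above. First, since $u^\ep=u^*+\ep(u-u^*)$ is admissible for every $\ep\in[0,1]$ by convexity of $U$ and $(X^*,u^*)$ is optimal, one has $J(u^\ep)\ge J(u^*)$, hence $\frac{\d}{\d\ep}J(u^\ep)\big|_{\ep=0}\ge0$. Differentiating $J$ under the expectation, using the $L^2$-convergence \eqref{eq:L2-Y} of $\ep^{-1}(X^\ep-X^*)$ towards the solution $Y$ of \eqref{eq:Y} (Lemma~\ref{lemma_Y}), the boundedness and differentiability of $f$ and $g$ from \textbf{(H4)}--\textbf{(H5)}, and dominated convergence, one obtains the expansion \eqref{eq:dJ1}, which expresses $0\le\frac{\d}{\d\ep}J(u^\ep)\big|_{\ep=0}$ linearly in $Y$ and $u-u^*$.

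Next, I introduce the adjoint pair $(P,\beta)$ as the solution of the mean-field BSDE \eqref{eq:Pt} with $P_T=\partial_xg(X^*_T,\PP_{X^*_T})+\wt\E[\partial_\mu g(\wt X^*_T,\PP_{X^*_T},X^*_T)]$, whose existence is assumed here (and established for short horizon elsewhere). Applying Corollary~\ref{corol} to the product $Y_tP_t$ is legitimate because the $\d B^H$-integrand of $Y$, namely $\wt\E[\partial_\mu\sigma(\PP_{X^*_s},\wt X^*_s)\wt Y_s]=\E[\partial_\mu\sigma(\PP_{X^*_s},X^*_s)Y_s]$, is a \emph{deterministic} bounded function of $s$ (by \textbf{(H3)}(i) and $Y\in L^{2,*}$), so the hypotheses of Proposition~\ref{def_stratonovich} underlying that corollary hold; this is what produces the extra term $\wt\E[\partial_\mu\sigma(\PP_{X^*_t},\wt X^*_t)\wt Y_t]\,\mathbb{D}^H_tP_t\,\d t$. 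Integrating over $[0,T]$, using $Y_0=0$ and taking $\PP$-expectation, the Skorohod integral against $B^H$ and the It\^o integral against $W$ have zero mean, which yields \eqref{eq:EYtPt}.

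Then, invoking the independence of the copy $(\wt\Omega,\wt\F,\wt\PP)$, I interchange tilded and untilded variables inside the $\partial_\mu b$- and $\partial_\mu\sigma$-terms and apply Fubini, rewriting \eqref{eq:EYtPt} as \eqref{eq:EYtPtfinal}; substituting \eqref{eq:dJ1} gives \eqref{eq:EYtPtfinal1}, an inequality whose integrand splits into a part multiplying $Y_s$ and a part multiplying $u_s-u^*_s$. Choosing $\alpha_s$ so that the $Y_s$-coefficient vanishes, i.e. \eqref{eq:alpha}, turns $P_t=P_T-\int_t^T\alpha_s\d s-\int_t^T\beta_s\d W_s$ into the BSDE \eqref{eq:Pt}, the third line of \eqref{eq:necessary}. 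With this choice, \eqref{eq:EYtPtfinal1} reduces to a nonnegative integral whose integrand is linear in $u_s-u^*_s$ for an arbitrary admissible $u$; since $U$ is open, $u-u^*$ may be steered in any direction on a set of positive $\d\PP\otimes\d t$-measure, forcing the bracket to vanish a.e., which is \eqref{eq:hamiltonian}. Together with the forward equation \eqref{eq:control} and the terminal condition for $P_T$ this is exactly \eqref{eq:necessary}.

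The main obstacle is the rigorous justification of the duality step: one must check that $Y$ and $P$ carry enough Malliavin regularity for Corollary~\ref{corol} to apply, in particular that $\mathbb{D}^H_sP_s$ and $\E[\mathbb{D}^H_sP_s]$ in \eqref{eq:EYtPt2} are well defined, and that all stochastic integrals appearing after integration indeed have zero expectation; here the representation of the fractional integrand as a deterministic bounded function and the uniform integrability of $\ep_t$ (and $\ep_t(\T_t)$) established in the Girsanov subsection are the key inputs. The reshuffling of the measure-derivative terms and the final pointwise argument are then routine.
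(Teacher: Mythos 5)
Your proposal is correct and follows essentially the same route as the paper: the first-order variational inequality via Lemma \ref{lemma_Y}, the adjoint process $(P,\beta)$ with unspecified drift $\alpha$, It\^o's formula (Corollary \ref{corol}) applied to $Y_tP_t$ followed by taking expectations, the independent-copy Fubini interchange of the $\partial_\mu$-terms, the choice \eqref{eq:alpha} killing the $Y_s$-coefficient to produce the BSDE \eqref{eq:Pt}, and finally the openness of $U$ together with the arbitrariness of $u$ to force \eqref{eq:hamiltonian}. The paper likewise assumes existence of the solution $(P,\beta)$ of \eqref{eq:Pt} at this stage, so no gap relative to the paper's own argument.
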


We can also give a sufficient condition for optimality under some more assumptions. In the following we define our Hamiltonian, for $(\mu,x,u,y,z)\in\mathcal{P}_2(\R\times\R^m)\times\R\times\R^d\times\R\times\R$,  $$H(\mu,x,u,y,z):=f(\mu,x,u)+b(\mu,x,u)y+\sigma(\mu)z.$$ 
For the following assumption, we recall the definition of joint convexity (\ref{convexity}).

\textbf{(H6)}. $g:\R\times\mathcal{P}_2(\R)\to \R$ is jointly convex in $(x,\mu)$. The Hamiltonian $H(\mu,x,u,$ $y,z)$ is jointly convex in $(\mu,x,u)$.  

\begin{theorem}
\label{thm:suffi}
Suppose (H2)-(H6) hold. Let $(u^*_t,X^*_t)_{t\in[0,T]}$ satisfy system (\ref{eq:necessary}). Then $(u^*_t,X^*_t)_{t\in[0,T]}$ is  optimal and $J(u^*)=\inf_{u\in \mathcal{U}([0,T])} J(u)$.
\end{theorem}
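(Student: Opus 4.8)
The plan is to carry out the usual convexity (verification) argument, adapted to the mean-field and fractional setting, the new feature being that the fractional product rule produces a cross term $\Delta\sigma_s\,\mathbb{D}_s^HP_s$ which has to be matched against the $\sigma$-part of the Hamiltonian. Fix an arbitrary $u\in\mathcal{U}([0,T])$, write $X=X^u$ for the corresponding state (so that $X-X^*$ starts at $0$, both states starting at $x$), and let $(P,\beta)$ be the pair appearing in system \eqref{eq:necessary}. We must show $J(u)-J(u^*)\ge0$. As a first step, the joint convexity of $g$ in $(x,\mu)$ from \textbf{(H6)}, applied at $(X_T^*,\PP_{X_T^*})$, gives $g(X_T,\PP_{X_T})-g(X_T^*,\PP_{X_T^*})\ge\partial_xg(X_T^*,\PP_{X_T^*})(X_T-X_T^*)+\wt\E[\partial_\mu g(X_T^*,\PP_{X_T^*},\wt X_T^*)(\wt X_T-\wt X_T^*)]$; taking $\E$ and exchanging $(X_T^*,X_T)$ with its independent copy exactly as in \eqref{eq:EYtPt1}--\eqref{eq:EYtPt2}, this becomes $\E[g(X_T,\PP_{X_T})-g(X_T^*,\PP_{X_T^*})]\ge\E[P_T(X_T-X_T^*)]$, with $P_T$ as in \eqref{eq:necessary}.

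Next I would compute $\E[P_T(X_T-X_T^*)]$ by fractional integration by parts. The difference has the form $X_t-X_t^*=\int_0^t\Delta b_s\,\d s+\int_0^t\Delta\sigma_s\,\d B_s^H$, where $\Delta b_s=b(\PP_{(X_s,u_s)},X_s,u_s)-b(\PP_{(X_s^*,u_s^*)},X_s^*,u_s^*)$ and $\Delta\sigma_s=\sigma(\PP_{X_s})-\sigma(\PP_{X_s^*})$ is a \emph{deterministic} function, bounded by \textbf{(H2)}(i) together with the uniform $L^2$-bounds on $X,X^*$; it therefore satisfies the hypotheses of Proposition~\ref{def_stratonovich}. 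Since $P_t=\int_0^t\alpha_s\,\d s+\int_0^t\beta_s\,\d W_s$, applying the fractional product rule of Corollary~\ref{corol} (precisely as was done for $Y_tP_t$ in \eqref{eq:dYtPt}) to $P_t(X_t-X_t^*)$, integrating on $[0,T]$, and taking expectation — the $\d B^H$-term being a Skorohod integral of zero mean by \eqref{eq_duality} with $G\equiv1$ and the $\d W$-term a martingale — gives, using that $\Delta\sigma_s$ is deterministic,
\[
\E[P_T(X_T-X_T^*)]=\E\!\int_0^T\!\Big(P_s\Delta b_s+(X_s-X_s^*)\alpha_s+\Delta\sigma_s\,\E[\mathbb{D}_s^HP_s]\Big)\d s.
\]

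Then I would substitute the expression \eqref{eq:alpha} for $\alpha_s$, put $z_s:=\E[\mathbb{D}_s^HP_s]$, and perform the same tilde/untilde exchanges as in \eqref{eq:EYtPt1}--\eqref{eq:EYtPt2}. Recalling $H(\mu,x,u,y,z)=f(\mu,x,u)+b(\mu,x,u)y+\sigma(\mu)z$, the terms $P_s\Delta b_s+z_s\Delta\sigma_s$ together with the running-cost difference $f(\PP_{(X_s,u_s)},X_s,u_s)-f(\PP_{(X_s^*,u_s^*)},X_s^*,u_s^*)$ add up to $H(\PP_{(X_s,u_s)},X_s,u_s,P_s,z_s)-H(\PP_{(X_s^*,u_s^*)},X_s^*,u_s^*,P_s,z_s)$, while all the remaining terms coming from $\alpha_s$ assemble into $-\partial_xH_s(X_s-X_s^*)-\wt\E[(\partial_\mu H_s)_1(\wt X_s-\wt X_s^*)]$, where $H_s$ and its derivatives are evaluated at $(\PP_{(X_s^*,u_s^*)},X_s^*,u_s^*,P_s,z_s)$; here one uses $\partial_xH=\partial_xf+y\,\partial_xb$ (since $\sigma$ does not depend on $x$) and $\partial_\mu H=\partial_\mu f+y\,\partial_\mu b+z\,\partial_\mu\sigma$. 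Combining with the first step,
\begin{align*}
J(u)-J(u^*)\ \ge\ \E\!\int_0^T\!\Big(&H(\PP_{(X_s,u_s)},X_s,u_s,P_s,z_s)-H(\PP_{(X_s^*,u_s^*)},X_s^*,u_s^*,P_s,z_s)\\
&-\partial_xH_s(X_s-X_s^*)-\wt\E[(\partial_\mu H_s)_1(\wt X_s-\wt X_s^*)]\Big)\d s.
\end{align*}
Now the joint convexity of $H$ in $(\mu,x,u)$ from \textbf{(H6)}, applied at the optimal point with $(y,z)=(P_s,z_s)$ frozen, bounds the Hamiltonian difference below by $\partial_xH_s(X_s-X_s^*)+\partial_uH_s(u_s-u_s^*)+\wt\E[(\partial_\mu H_s)_1(\wt X_s-\wt X_s^*)+(\partial_\mu H_s)_2(\wt u_s-\wt u_s^*)]$. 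The $\partial_xH_s$- and $(\partial_\mu H_s)_1$-terms cancel against the ones already present, and one further tilde/untilde exchange on the $(\partial_\mu H_s)_2$-term leaves
\[
J(u)-J(u^*)\ \ge\ \E\!\int_0^T(u_s-u_s^*)\big(\partial_uf_s+P_s\partial_ub_s+\wt\E[(\partial_\mu f)_2(\cdots)]+\wt\E[\wt P_s(\partial_\mu b)_2(\cdots)]\big)\d s,
\]
with arguments exactly as in \eqref{eq:hamiltonian}; by the last (pointwise) equation of \eqref{eq:necessary} the bracket vanishes $\d\PP$-a.s., $\d t$-a.e., so $J(u)-J(u^*)\ge0$, and since $u$ was arbitrary, $u^*$ is optimal.

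The computations of the first three steps are routine once the machinery of Section~\ref{section sde} and of the derivation of \eqref{eq:necessary} is in place; I expect the main point to watch to be the bookkeeping in the third step, namely that the cross term $\Delta\sigma_s\,\mathbb{D}_s^HP_s$ produced by the fractional product rule is exactly absorbed by the $z\,\partial_\mu\sigma$-contribution to $\partial_\mu H$ once one identifies the Hamiltonian's $z$-variable with $z_s=\E[\mathbb{D}_s^HP_s]$ — which is precisely why the drift $\alpha_s$ in \eqref{eq:alpha}, and hence the BSDE \eqref{eq:Pt}, was chosen to carry the term $\partial_\mu\sigma(\PP_{X_s^*},X_s^*)\E[\mathbb{D}_s^HP_s]$. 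A secondary point is the well-definedness of the Skorohod integral $\int_0^T P_s\Delta\sigma_s\,\d B_s^H$ and the applicability of Corollary~\ref{corol} when $\beta$ is only square-integrable; since $\Delta\sigma$ is deterministic and bounded these are dealt with exactly as for the integral $\int_0^TP_s\wt\E[\partial_\mu\sigma(\PP_{X_s^*},\wt X_s^*)\wt Y_s]\,\d B_s^H$ appearing in \eqref{eq:YtPt}.
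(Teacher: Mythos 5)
Your proposal is correct and follows essentially the same route as the paper's own proof: convexity of $g$ to reduce to $\E[P_T(X_T-X_T^*)]$, the fractional product rule of Corollary \ref{corol} applied to $P_t(X_t-X_t^*)$ with the deterministic cross term $\Delta\sigma_s\,\E[\mathbb{D}_s^HP_s]$, the tilde/untilde exchanges, joint convexity of $H$ at the optimal point with $(y,z)=(P_s,\E[\mathbb{D}_s^HP_s])$ frozen, and finally the pointwise stationarity condition in \eqref{eq:necessary} (the paper's \eqref{eq:condition-hamilton}) to kill the remaining $u$-terms. The only difference is cosmetic: you package the terms into Hamiltonian increments and derivatives, whereas the paper writes the inequality \eqref{eq:Ju*-Ju3} with all coefficients spelled out explicitly.
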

\begin{proof}
	Suppose $(u_t,X^u_t)$ is an arbitrary control and the corresponding sate. Then from the definition of cost functional $J(u)$, we have 
	\begin{equation}
	\label{eq:Ju*-Ju}
	\begin{aligned}
	J(u^*)-J(u)=&\E\lt[g(X^{*}_T,\PP_{X^*_T})-g(X^{u}_T,\PP_{X^u_T})\rt]\\
	&+\E\lt[\int^T_0\lt(f(\PP_{(X^*_t,u^*_t)},X^*_t,u^*_t)-f(\PP_{(X^u_t,u_t)},X^u_t,u_t)\rt)\d t\rt].
    \end{aligned}
	\end{equation}
	Hence from the joint convexity of $g$, we have
	\begin{equation}
	\label{eq:Ju*-Ju1}
	\begin{aligned}
	J(u^*)-J(u)
	\le&\E\lt[\lt(\partial_x g(X^{*}_T,\PP_{X^*_T})+\wt{\E}\lt[\partial_\mu g(\wt{X}^{*}_T,\PP_{X^*_T},{X}^*_T)\rt]\rt)(X^*_T-X^u_T)\rt]\\
	&+\E\lt[\int^T_0\lt(f(\PP_{(X^*_t,u^*_t)},X^*_t,u^*_t)-f(\PP_{(X^u_t,u_t)},X^u_t,u_t)\rt)\d t\rt]\\
	=&\E\lt[P_T(X^*_T-X^u_T)\rt]\\
	&+\E\lt[\int^T_0\lt(f(\PP_{(X^*_t,u^*_t)},X^*_t,u^*_t)-f(\PP_{(X^u_t,u_t)},X^u_t,u_t)\rt)\d t\rt].
	\end{aligned}
	\end{equation}
	By applying Ito's formula (equation (\ref{eq:ito_WH})) to $P_t(X^*_t-X^u_t)$ and taking the expectation on both sides, we get:
	\begin{equation}
	\label{eq:Ju*-Ju2}
	\begin{aligned}
	&\E\lt[P_T(X^*_T-X^u_T)\rt]+\E\lt[\int^T_0\lt(f(\PP_{(X^*_s,u^*_s)},X^*_s,u^*_s)-f(\PP_{(X^u_s,u_s)},X^u_s,u_s)\rt)\d s\rt]\\
	=&\int^T_0\E\lt[P_s\lt(b(\PP_{(X^*_s,u^*_s)},X^*_s,u^*_s)-b(\PP_{(X^u_s,u_s)},X^u_s,u_s)\rt)\rt]\d s\\
	&-\int^T_0\E\lt[{P}_s\wt{\E}\lt[(\partial_{\mu}b)_1\lt(\PP_{(\wt{X}_s^*,\wt{u}^*_s)},{X}^*_s, {u}^*_s,\wt{X}^*_s, \wt{u}^*_s\rt)(\wt{X}^*_s-\wt{X}^u_s)\rt]\rt]\d s\\
	&-\int^T_0\E\lt[ P_s\partial_xb\lt(\PP_{(X^*_s,u^*_s)}, X_s^*, u^*_s\rt)({X}^*_s-{X}^u_s)\rt]\d s\\
	&+\int^T_0\lt(\sigma\lt(\PP_{X^*_s}\rt)- \sigma\lt(\PP_{{X}^u_s}\rt)\rt)\E\lt[\mathbb{D}^H_sP_s\rt]\d s\\
	&-\int^T_0\E\lt[\partial_{\mu}\sigma\lt(\PP_{X^*_s}, {X}_s^*\rt)({X}^*_s-{X}^u_s)\rt]{\E}\lt[{\mathbb{D}}^H_s{P}_s\rt]\d s\\
	&-\int^T_0\E\lt[\partial_x f(\PP_{(X^*_s,u^*_s)},X^*_s,u^*_s)({X}^*_s-{X}^u_s)\rt]\d s\\
	&-\int^T_0\E\lt[\wt{\E}\lt[(\partial_\mu f)_1( \PP_{(X^*_s,u^*_s)},X^*_s,u^*_s,\wt{X}^*_s,\wt{u}^*_s)(\wt{X}^*_s-\wt{X}^u_s)\rt]\rt]\d s.
	\end{aligned}
	\end{equation}
	From the joint convexity of Hamiltonian $H$ we get 
	 \begin{equation}
	 \label{eq:Ju*-Ju3}
	 \begin{aligned}
	 &P_s\lt(b(\PP_{(X^*_s,u^*_s)},X^*_s,u^*_s)-b(\PP_{(X^u_s,u_s)},X^u_s,u_s)\rt)
	 +\lt(\sigma\lt(\PP_{X^*_s}\rt)- \sigma\lt(\PP_{{X}^u_s}\rt)\rt)\E\lt[\mathbb{D}^H_sP_s\rt]\\
	 &+f(\PP_{(X^*_s,u^*_s)},X^*_s,u^*_s)-f(\PP_{(X^u_s,u_s)},X^u_s,u_s)\\
	 \le&{P}_s\wt{\E}\lt[(\partial_{\mu}b)_1\lt(\PP_{({X}^*_s, {u}^*_s)},{X}^*_s, {u}^*_s,\wt{X}^*_s, \wt{u}^*_s\rt)(\wt{X}^*_s-\wt{X}^u_s)\rt]\\
	 &+ P_s\partial_xb\lt(\PP_{(X^*_s,u^*_s)}, X_s^*, u^*_s\rt)({X}^*_s-{X}^u_s)\\
	 &+{P}_s\wt{\E}\lt[(\partial_{\mu}b)_2\lt(\PP_{({X}^*_s, {u}^*_s)},{X}^*_s, {u}^*_s,\wt{X}^*_s, \wt{u}^*_s\rt)(\wt{u}^*_s-\wt{u}_s)\rt]\\
	 &+ P_s\partial_ub\lt(\PP_{(X^*_s,u^*_s)}, X_s^*, u^*_s\rt)(u^*_s-u_s)\\
	 &+\E\lt[\partial_{\mu}\sigma\lt(\PP_{X^*_s},{X}_s^*\rt)({X}^*_s-{X}^u_s)\rt]{\E}\lt[{\mathbb{D}}^H_s{P}_s\rt]\\
	 &+\partial_x f(\PP_{(X^*_s,u^*_s)},X^*_s,u^*_s)({X}^*_s-{X}^u_s)\\
	 &+\wt{\E}\lt[(\partial_\mu f)_1( \PP_{(X^*_s,u^*_s)},X^*_s,u^*_s,\wt{X}^*_s,\wt{u}^*_s)(\wt{X}^*_s-\wt{X}^u_s)\rt]\\
	 &+\wt{\E}\lt[(\partial_{\mu}f)_2\lt(\PP_{({X}^*_s, {u}^*_s)},{X}^*_s, {u}^*_s,\wt{X}^*_s, \wt{u}^*_s\rt)(\wt{u}^*_s-\wt{u}_s)\rt]\\
	 &+\partial_u f\lt(\PP_{(X^*_s,u^*_s)}, X_s^*, u^*_s\rt)(u^*_s-u_s),\\
	 \end{aligned}
	 \end{equation}
	 where due to equation (\ref{eq:necessary}), 
	 \begin{equation}
	 \label{eq:condition-hamilton}
	 \begin{aligned}
	 &\E\lt[\wt{\E}\lt[(\partial_{\mu}f)_2\lt(\PP_{({X}^*_s, {u}^*_s)},{X}^*_s, {u}^*_s,\wt{X}^*_s, \wt{u}^*_s\rt)(\wt{u}^*_s-\wt{u}_s)\rt]\rt] \\
	 &+\E\lt[\partial_u f\lt(\PP_{(X^*_s,u^*_s)}, X_s^*, u^*_s\rt)(u^*_s-u_s)\rt]\\
	 &+\E\bigg[{P}_s\wt{\E}\lt[(\partial_{\mu}b)_2\lt(\PP_{({X}_s^*,{u}^*_s)},{X}^*_s, {u}^*_s,\wt{X}^*_s, \wt{u}^*_s\rt)(\wt{u}^*_s-\wt{u}_s)\rt]\\
	 &\quad+ P_s\partial_ub\lt(\PP_{(X^*_s,u^*_s)}, X_s^*, u^*_s\rt)(u^*_s-u_s)\bigg]\\
	 =\E&\bigg[\bigg(\wt{\E}\lt[(\partial_{\mu}f)_2\lt(\PP_{({X}^*_s, {u}^*_s)},\wt{X}^*_s, \wt{u}^*_s,{X}^*_s, {u}^*_s\rt)\rt]+\partial_u f\lt(\PP_{(X^*_s,u^*_s)}, X_s^*, u^*_s\rt)\\
	 &+\wt{P}_s(\partial_{\mu}b)_2\lt(\PP_{({X}^*_s, {u}^*_s)},\wt{X}^*_s, \wt{u}^*_s,{X}^*_s, {u}^*_s\rt)+ P_s\partial_ub\lt(\PP_{(X^*_s,u^*_s)}, X_s^*, u^*_s\rt)\bigg)(u^*_s-u_s)\bigg]\\
	 =&0.
	 \end{aligned}
	 \end{equation} 
	 Therefore we get from the equations (\ref{eq:Ju*-Ju})-(\ref{eq:Ju*-Ju3}) that 
	 $$
	 J(u^*)-J(u)\le 0,
	 $$
	 which means $(u^*,X^*)$ is an optimal pair.
\end{proof}

\begin{remark}
	We emphasise that in (H6), similarly to Carmona and Delarue \cite{CaDe}, we assume the convexity of the Hamiltonian $H$.  If there is no running cost function $f$, supposing convexity is in some sense equivalent to assuming linearity, because of the  multiplications  with $P_t$ and $\E\lt[\mathbb{D}^H_tP_t\rt]$, respectively, which sign can change. With the assumption of linearity, the inequality in   (\ref{eq:Ju*-Ju3}) become equality. 
\end{remark}

In the following we give another sufficient condition which allows to have more general coefficients which are not necessarily linear. For this we need the following assumption and we recall the definition of strict concavity (\ref{strictconvexity}).

\textbf{(H7)}. Then function $g:\R\times\mathcal{P}_2(\R)\to \R$ is jointly convex in $(x,\mu)$, with $\partial_x g\ge 0$ and $\partial _\mu g\ge 0$, and  $b(\eta,x,u):\mathcal{P}_2(\R\times U)\times\R\times U\to \R$ is jointly convex in $(\eta,x,u)$ with $(\partial_\mu b)_1(\eta,x,u,y)\ge 0$ and strictly convex in $(\mu,u)$. Moreover,  $f(\eta,x,u):\mathcal{P}_2(\R\times U)\times\R\times U\to \R$ is jointly convex in $(\eta,x,u)$ and strictly convex in $(\mu,u)$, with $(\partial_\mu f)_1(\eta,x,u,y)\ge 0$, $\partial_x f(\eta,x,u,y)\ge 0$, and $\sigma(\mu)\equiv \sigma\in\R$.

\begin{theorem}
	\label{thm:suffi1}
	Suppose (H2)-(H5) and (H7) hold. Let $(u^*_t,X^*_t)_{t\in[0,T]}$ satisfy system (\ref{eq:necessary}). Then $(u^*_t,X^*_t)_{t\in[0,T]}$ is  optimal  and $J(u^*)=\inf_{u\in \mathcal{U}([0,T])} J(u)$.
\end{theorem}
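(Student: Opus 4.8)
\emph{Proof plan.} The plan is to reduce Theorem~\ref{thm:suffi1} to the proof of Theorem~\ref{thm:suffi}. The only place in that proof where \textbf{(H6)} is actually used is to guarantee that $(\mu,x,u)\mapsto H(\mu,x,u,P_s,\E[\mathbb{D}^H_sP_s])$ is jointly convex along the optimal adjoint path, so it is enough to recover this convexity from \textbf{(H7)}. The new ingredient that makes this possible, although the pointwise factor $P_s$ has a priori no sign, is the nonnegativity of the adjoint process $P$.

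First I would show that $P_s\ge0$. Since $\sigma(\mu)\equiv\sigma\in\R$ is constant, $\partial_\mu\sigma\equiv0$ and the anticipating term $\partial_\mu\sigma(\PP_{X^*_s},X^*_s)\E[\mathbb{D}^H_sP_s]$ drops out of \eqref{eq:Pt}, so the adjoint equation becomes an ordinary linear mean-field BSDE. By \textbf{(H7)} its terminal value $P_T=\partial_xg(X^*_T,\PP_{X^*_T})+\wt\E[\partial_\mu g(\wt X^*_T,\PP_{X^*_T},X^*_T)]$ is $\ge0$, its inhomogeneous part $\partial_xf(\cdots)+\wt\E[(\partial_\mu f)_1(\cdots)]$ is $\ge0$, and its only coupling through the law of $P$ is the term $\wt\E[\wt P_s(\partial_\mu b)_1(\cdots)]$ with $(\partial_\mu b)_1\ge0$. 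From the comparison principle for linear mean-field BSDEs one then gets $P_s\ge0$, $\d t$-a.e., $\PP$-a.s.; concretely, nonnegativity is preserved in the Picard scheme started from $P^0\equiv0$, because for a fixed nonnegative input the $(n+1)$-st iterate has the representation $P^{n+1}_t=\E[P_T\,\Gamma_{t,T}+\int_t^T\Gamma_{t,s}(\wt\E[\wt P^n_s(\partial_\mu b)_1]+\partial_xf+\wt\E[(\partial_\mu f)_1])\,\d s\mid\F_t]$ with $\Gamma_{t,s}=\exp(\int_t^s\partial_xb(\cdots)\,\d r)>0$.

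Next, $H(\mu,x,u,P_s,\E[\mathbb{D}^H_sP_s])=f(\mu,x,u)+P_s\,b(\mu,x,u)+\sigma\,\E[\mathbb{D}^H_sP_s]$, in which the last summand is constant in $(\mu,x,u)$, $f$ is jointly convex by \textbf{(H7)}, and $P_s\,b$ is jointly convex since $b$ is and $P_s\ge0$; hence this map is jointly convex, $\d t$-a.e., $\PP$-a.s., which is exactly the property \textbf{(H6)} supplied in Theorem~\ref{thm:suffi}. I would then repeat that proof essentially verbatim: from \eqref{eq:Ju*-Ju}, the joint convexity of $g$ and the form of $P_T$ give $J(u^*)-J(u)\le\E[P_T(X^*_T-X^u_T)]+\E[\int_0^T(f^*_s-f^u_s)\,\d s]$ as in \eqref{eq:Ju*-Ju1} (with $f^*_s:=f(\PP_{(X^*_s,u^*_s)},X^*_s,u^*_s)$, $f^u_s:=f(\PP_{(X^u_s,u_s)},X^u_s,u_s)$); because $\sigma$ is constant, $X^*_t-X^u_t$ carries no $\d B^H_t$-term, so applying the product rule — the case of Corollary~\ref{corol} with vanishing $B^H$-integrand, in which no $\mathbb{D}^H$ cross term appears — to $P_t(X^*_t-X^u_t)$ and taking expectations reproduces the analogue of \eqref{eq:Ju*-Ju2} with all $\partial_\mu\sigma$- and $\mathbb{D}^H$-terms equal to zero; the convexity inequality (analogue of \eqref{eq:Ju*-Ju3}) and the independent-copy/Fubini cancellations of \eqref{eq:EYtPt1} — which close exactly because the driver of \eqref{eq:Pt} was built to annihilate every term proportional to $X^*_s-X^u_s$ — then leave $J(u^*)-J(u)\le\int_0^T\E[(u^*_s-u_s)(\partial_uf(\cdots)+P_s\,\partial_ub(\cdots)+\wt\E[(\partial_\mu f)_2(\cdots)]+\wt\E[\wt P_s(\partial_\mu b)_2(\cdots)])]\,\d s$, whose integrand is $0$ by the last identity of \eqref{eq:necessary} (cf.\ \eqref{eq:condition-hamilton}). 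Hence $J(u^*)\le J(u)$ for every $u\in\mathcal{U}([0,T])$, i.e.\ $(u^*,X^*)$ is optimal.

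The hard part will be the nonnegativity step: making the comparison argument rigorous for a \emph{mean-field} BSDE, i.e.\ controlling the dependence on the law of $P$ — harmless here only because $(\partial_\mu b)_1\ge0$ — and this rests on first observing that constancy of $\sigma$ turns \eqref{eq:Pt} into a genuine, non-anticipating BSDE. Once $P\ge0$ is available the remaining steps are a routine rerun of the proof of Theorem~\ref{thm:suffi}, the constancy of $\sigma$ only serving to delete terms; the strict convexity in $(\mu,u)$ postulated in \textbf{(H7)} does not appear to be needed for the optimality inequality itself.
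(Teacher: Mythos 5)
Your proposal is correct and follows essentially the same route as the paper: constancy of $\sigma$ kills the anticipating $\partial_\mu\sigma\,\E[\mathbb{D}^H_sP_s]$ terms, nonnegativity of $P$ is obtained from the sign conditions in (H7) together with $(\partial_\mu b)_1\ge 0$ (the paper invokes the mean-field BSDE comparison theorem of Buckdahn--Li--Peng by comparing with the zero BSDE, while you re-derive the same fact via a positivity-preserving Picard/linear-representation argument), and then the convexity argument of Theorem \ref{thm:suffi} is rerun with $P_s\,b$ jointly convex because $P_s\ge 0$, the first-order condition \eqref{eq:condition-hamilton} annihilating the $u$-terms. The only item the paper proves that you omit is the upper bound $P_t\le C e^{2C(T-t)}$ via a second comparison, which indeed is not needed for the optimality inequality itself (it serves the subsequent lemma and the FBSDE solvability), and your remark that the strict convexity in $(\mu,u)$ from (H7) is not used here is likewise consistent with the paper.
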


\begin{remark}
	The existence of such functions can be easily verified. For instance, we can choose $g$ as an increasing convex function with bounded derivatives. For an increasing joint convex  function  $\bar{b}: \R\times U\times\R\times U$ with bounded derivatives, we can choose 
	$b\lt(\PP_{(X_s^*,u^*_s)},{X}^*_s, {u}^*_s\rt):=\wt{\E}\lt[\bar{b}\lt(\wt{X}^*_s, \wt{u}^*_s,X_s^*,u^*_s\rt)\rt]$.
\end{remark}

\begin{proof}
	With these assumptions, the BSDE in system (\ref{eq:necessary}) now becomes
	\begin{equation}
	\label{BSDE}
	\begin{aligned}
	P_t=&\partial_x g(X^*_T,\PP_{X^*_T})+\wt{\E}\lt[\partial_\mu g(\wt{X^*_T},\PP_{X^*_T},X^*_T)\rt]-\int^T_t \beta_s\d W_s\\
	&+\int^T_t\wt{\E}\lt[\wt{P}_s(\partial_{\mu}b)_1\lt(\PP_{(X_s^*,u^*_s)},\wt{X}^*_s, \wt{u}^*_s, {X}_s^*,u^*_s\rt)\rt]\d s\\	&+\int^T_t P_s\partial_xb\lt(\PP_{(X^*_s,u^*_s)}, X_s^*, u^*_s\rt)\d s \\
	&+\int^T_t\lt[\partial_x f(\PP_{(X^*_s,u^*_s)},X^*_s,u^*_s)+\wt{\E}\lt[(\partial_\mu f)_1( \PP_{(X^*_s,u^*_s)},\wt{X}^*_s,\wt{u}^*_s,X^*_s,u^*_s)\rt]\rt]\d s,\\
	\end{aligned}
	\end{equation}
	which is a mean-field BSDE in the classical sense which was studied by Buckdahn et. al. \cite{BLP}.   We compare it with the following BSDE
	\begin{equation}
	\label{BSDE0}
	\begin{aligned}
	Q_t=&0
	+\int^T_t\wt{\E}\lt[\wt{Q}_s(\partial_{\mu}b)_1\lt(\PP_{(X_s^*,u^*_s)},\wt{X}^*_s, \wt{u}^*_s, {X}_s^*,u^*_s\rt)\rt]\d s \\&	+\int^T_t Q_s\partial_xb\lt(\PP_{(X^*_s,u^*_s)}, X_s^*, u^*_s\rt)\d s -\int^T_t Z_s\d W_s,\\
	\end{aligned}
	\end{equation}
	which has a unique solution $(Q_t,Z_t)=(0,0)$. From the comparison result (Theorem 3.2) in \cite{BLP}, we see that
	$P_t\ge Q_t=0,$ for all $t\in[0,T]$. On the other hand, due to $(H3)$-$H(5)$ and $(H7)$, we have $0\le \partial_x g\le C$, $0\le \partial_\mu g\le C$, $0\le (\partial_\mu b)_1\le C$ and $|\partial_x b|\le C$. We compare equation (\ref{BSDE}) again with the following BSDE
		\begin{equation}
		\label{BSDE1}
		\begin{aligned}
		Q^\pr_t=&C
		+C\int^T_t\E\lt[{Q}^\pr_s\rt]\d s+C\int^T_t Q^\pr_s\d s -\int^T_t Z^\pr_s\d W_s,
		\end{aligned}
		\end{equation}
	which has a unique solution $(Q^\pr_t,Z^\pr_t)=(C\exp\{2C(T-t)\},0), t\in[0,T]$. From the comparison result again, we get $P_t\le Q^\pr_t$, hence $P_t$ is uniformly bounded. The equation (\ref{eq:Ju*-Ju2}) becomes
	\begin{equation}
	\label{eq:Ju*-Ju21}
	\begin{aligned}
	&\E\lt[P_T(X^*_T-X^u_T)\rt]+\E\lt[\int^T_0\lt(f(\PP_{(X^*_s,u^*_s)},X^*_s,u^*_s)-f(\PP_{(X^u_s,u_s)},X^u_s,u_s)\rt)\d s\rt]\\
	=&\E\lt[\int^T_0\lt(f(\PP_{(X^*_s,u^*_s)},X^*_s,u^*_s)-f(\PP_{(X^u_s,u_s)},X^u_s,u_s)\rt)\d s\rt]\\
	&+\int^T_0\E\lt[P_s\lt(b(\PP_{(X^*_s,u^*_s)},X^*_s,u^*_s)-b(\PP_{(X^u_s,u_s)},X^u_s,u_s)\rt)\rt]\d s\\
	&-\int^T_0\E\lt[{P}_s\wt{\E}\lt[(\partial_{\mu}b)_1\lt(\PP_{(\wt{X}_s^*,\wt{u}^*_s)},{X}^*_s, {u}^*_s,\wt{X}^*_s, \wt{u}^*_s\rt)(\wt{X}^*_s-\wt{X}^u_s)\rt]\rt]\d s\\
	&-\int^T_0\E\lt[ P_s\partial_xb\lt(\PP_{(X^*_s,u^*_s)}, X_s^*, u^*_s\rt)({X}^*_s-{X}^u_s)\rt]\d s\\
		&-\int^T_0\E\lt[\partial_x f(\PP_{(X^*_s,u^*_s)},X^*_s,u^*_s)({X}^*_s-{X}^u_s)\rt]\d s\\
		&-\int^T_0\E\lt[\wt{\E}\lt[(\partial_\mu f)_1( \PP_{(X^*_s,u^*_s)},X^*_s,u^*_s,\wt{X}^*_s,\wt{u}^*_s)(\wt{X}^*_s-\wt{X}^u_s)\rt]\rt]\d s..
	\end{aligned}
	\end{equation}
	We deduce from the joint convexity of $b, f$ and the positivity of $P_s$  that
	\begin{equation}
	\label{eq:Ju*-Ju31}
	\begin{aligned}
	&f(\PP_{(X^*_s,u^*_s)},X^*_s,u^*_s)-f(\PP_{(X^u_s,u_s)},X^u_s,u_s)\\
	&+P_s\lt(b(\PP_{(X^*_s,u^*_s)},X^*_s,u^*_s)-b(\PP_{(X^u_s,u_s)},X^u_s,u_s)\rt)\\
	\le&\partial_x f(\PP_{(X^*_s,u^*_s)},X^*_s,u^*_s)({X}^*_s-{X}^u_s)+\wt{\E}\lt[(\partial_\mu f)_1( \PP_{(X^*_s,u^*_s)},X^*_s,u^*_s,\wt{X}^*_s,\wt{u}^*_s)(\wt{X}^*_s-\wt{X}^u_s)\rt]\\ &+\wt{\E}\lt[(\partial_{\mu}f)_2\lt(\PP_{({X}^*_s, {u}^*_s)},{X}^*_s, {u}^*_s,\wt{X}^*_s, \wt{u}^*_s\rt)(\wt{u}^*_s-\wt{u}_s)\rt]\\
	&+\partial_u f\lt(\PP_{(X^*_s,u^*_s)}, X_s^*, u^*_s\rt)(u^*_s-u_s)\\
	&+{P}_s\wt{\E}\lt[(\partial_{\mu}b)_1\lt(\PP_{({X}_s^*,{u}^*_s)},{X}^*_s, {u}^*_s,\wt{X}^*_s, \wt{u}^*_s\rt)(\wt{X}^*_s-\wt{X}^u_s)\rt]\\
	&+ P_s\partial_xb\lt(\PP_{(X^*_s,u^*_s)}, X_s^*, u^*_s\rt)({X}^*_s-{X}^u_s)\\
	&+{P}_s\wt{\E}\lt[(\partial_{\mu}b)_2\lt(\PP_{({X}_s^*,{u}^*_s)},{X}^*_s, {u}^*_s,\wt{X}^*_s, \wt{u}^*_s\rt)(\wt{u}^*_s-\wt{u}_s)\rt]\\
	&+ P_s\partial_ub\lt(\PP_{(X^*_s,u^*_s)}, X_s^*, u^*_s\rt)(u^*_s-u_s),
	\end{aligned}
	\end{equation}
	 where again due to equation (\ref{eq:necessary}), for almost all $s\in[0,T]$, we have equation (\ref{eq:condition-hamilton}).
	 Hence from equations (\ref{eq:Ju*-Ju1}), (\ref{eq:Ju*-Ju21}) and (\ref{eq:Ju*-Ju31}), we get 
	 $$
	 J(u^*)-J(u)\le 0.
	 $$
	 Therefore, the optimality of $(u^*,X^*)$ follows.
\end{proof}

Concerning the solvability of system (\ref{eq:necessary}) under the conditions of Theorem \ref{thm:suffi1}, we proceed as follows.
For any given $(P,\xi)\in L^2(\mathcal{F}_t)\times L^2(\mathcal{F}_t)$, we suppose that there is some $\eta\in L^2(\mathcal{F}_t;U)$  such that:
\begin{equation}
\label{eq:solvability}
\begin{aligned}
0=&\wt{\E}\lt[(\partial_\mu f)_2\lt(\PP_{(\xi,\eta)},\wt{\xi},\wt{\eta},\xi,\eta\rt)\rt]+\partial_u f(\PP_{(\xi,\eta)},\xi,\eta)\\
&+
\wt{\E}\lt[\wt{P}(\partial_\mu b)_2\lt(\PP_{(\xi,\eta)},\wt{\xi},\wt{\eta},\xi,\eta\rt)\rt]+P(\partial_u b)(\PP_{(\xi,\eta)},\xi,\eta).
\end{aligned}
\end{equation} 
\begin{lemma}
	The mapping $(P,\xi)\to \eta$ is Lipschitz under $L^2$-norm.
\end{lemma}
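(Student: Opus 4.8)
The plan is to read \eqref{eq:solvability} as the first--order condition for the functional
\begin{equation*}
\Phi(\eta):=\E\lt[f(\PP_{(\xi,\eta)},\xi,\eta)+P\,b(\PP_{(\xi,\eta)},\xi,\eta)\rt],\qquad \eta\in L^2(\F_t;U),
\end{equation*}
and then to exploit strong convexity. A direct computation of the Gâteaux derivative of $\Phi$ at $\eta$ in a direction $v\in L^2(\F_t;U)$, using the definition of $\partial_\mu$ from Section \ref{section preliminaries} and Fubini's theorem to exchange the generic random variable with its independent copy (note that under (H7) $\sigma$ is constant and hence does not contribute), shows that $\lt\langle D\Phi(\eta),v\rt\rangle_{L^2}=\E\lt[\lt\langle G(\eta,P,\xi),v\rt\rangle\rt]$, where $G(\eta,P,\xi)$ is the right--hand side of \eqref{eq:solvability}. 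Since $U$ is open, $\eta$ solves \eqref{eq:solvability} if and only if $D\Phi(\eta)=0$; in particular differentiability of $f,b$ in (H4) guarantees $\Phi\in C^1(L^2(\F_t;U))$.

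The key step is a strong monotonicity estimate: for $P\ge 0$ bounded (which is exactly the situation relevant for system \eqref{eq:necessary}, $P=P_t$ being non--negative and bounded by the comparison argument in the proof of Theorem \ref{thm:suffi1}), there is $\lambda>0$ such that for all $\eta,\eta'\in L^2(\F_t;U)$,
\begin{equation}
\label{eq:mono-lemma}
\E\lt[\lt\langle G(\eta,P,\xi)-G(\eta',P,\xi),\eta-\eta'\rt\rangle\rt]\ge \lambda\,\E\lt[|\eta-\eta'|^2\rt].
\end{equation}
To get \eqref{eq:mono-lemma} I would apply the strict convexity inequality \eqref{strictconvexity} to $(\mu,u)\mapsto f(\mu,\xi(\omega),u)$ and the convexity inequality \eqref{convexity} to $(\mu,u)\mapsto b(\mu,\xi(\omega),u)$, in both cases with $\mu=\PP_{(\xi,\eta)}$, $\mu'=\PP_{(\xi,\eta')}$ and the couplings $(\xi,\eta)$, $(\xi,\eta')$ (so that the $\partial_\mu$--term attached to the first coordinate drops, $\xi$ being common); then multiply the $b$--inequality by $P(\omega)\ge 0$, take $\E[\cdot]$, carry out the Fubini exchange for the copies, and add, obtaining $\Phi(\eta')-\Phi(\eta)\ge \E[\langle G(\eta,P,\xi),\eta'-\eta\rangle]+\lambda\E[|\eta-\eta'|^2]$; symmetrising in $\eta,\eta'$ and adding yields \eqref{eq:mono-lemma}. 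This also gives uniqueness of the solution of \eqref{eq:solvability}.

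To conclude, let $\eta=\eta(P,\xi)$ and $\eta'=\eta(P',\xi')$ be the associated solutions, so $G(\eta,P,\xi)=G(\eta',P',\xi')=0$. Writing
\begin{equation*}
0=\lt(G(\eta,P,\xi)-G(\eta',P,\xi)\rt)+\lt(G(\eta',P,\xi)-G(\eta',P',\xi')\rt),
\end{equation*}
pairing with $\eta-\eta'$ and taking expectations, \eqref{eq:mono-lemma} gives
\begin{equation*}
\lambda\,\E\lt[|\eta-\eta'|^2\rt]\le \lt\|G(\eta',P,\xi)-G(\eta',P',\xi')\rt\|_{L^2}\,\lt\|\eta-\eta'\rt\|_{L^2}.
\end{equation*}
It remains to estimate $\|G(\eta',P,\xi)-G(\eta',P',\xi')\|_{L^2}\le C(\|P-P'\|_{L^2}+\|\xi-\xi'\|_{L^2})$, which follows term by term from the boundedness and Lipschitz continuity of $\partial_u f,(\partial_\mu f)_2,\partial_u b,(\partial_\mu b)_2$ in (H4), from $W_2(\PP_{(\xi,\eta')},\PP_{(\xi',\eta')})\le \|\xi-\xi'\|_{L^2}$, from Cauchy--Schwarz for the terms carrying the (bounded) factors $P,\wt P$, and from the fact that the $u$--argument $\eta'$ is common on both sides so no $|\wt\eta-\wt\eta'|$ appears. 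Dividing by $\|\eta-\eta'\|_{L^2}$ gives $\|\eta-\eta'\|_{L^2}\le (C/\lambda)(\|P-P'\|_{L^2}+\|\xi-\xi'\|_{L^2})$.

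The main obstacle is the monotonicity estimate \eqref{eq:mono-lemma}: one must set up the convexity inequalities with precisely the right couplings so that (i) the first--coordinate $\partial_\mu$--contributions cancel, (ii) after the Fubini exchange the independent--copy terms recombine exactly into the $G$--form of \eqref{eq:solvability}, and (iii) the non--negativity (and boundedness) of $P$ is used so that the $b$--part does not destroy the strict convexity coming from $f$. Everything after that is routine bookkeeping with the Lipschitz bounds of (H4)--(H5).
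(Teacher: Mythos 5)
Your proposal is correct and follows essentially the same route as the paper: a strong monotonicity estimate obtained from the (strict) convexity of $f$ and the convexity of $b$ weighted by the non-negative bounded $P$, combined with the two first-order conditions \eqref{eq:solvability}, Cauchy--Schwarz, and the boundedness/Lipschitz properties of the derivatives in (H4), yielding $\E[|\eta-\eta'|^2]\le C\,\E[|\xi-\xi'|^2+|P-P'|^2]$. The variational reformulation via the functional $\Phi$ is only a framing device; the key inequality you call \eqref{eq:mono-lemma} is exactly the first inequality in the paper's proof.
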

\begin{proof}
Given $(P,\xi)\in L^2(\mathcal{F}_t)\times L^2(\mathcal{F}_t)$ and  $(\hat{P},\hat{\xi})\in L^2(\mathcal{F}_t)\times L^2(\mathcal{F}_t)$, let $\eta\in L^2(\mathcal{F}_t,U)$ be the solution of (\ref{eq:solvability}) associated with $(P,\xi)$ and $\hat{\eta}\in L^2(\mathcal{F}_t,U)$ be that for  $(\hat{P},\hat{\xi})$. Then the strict convexity of $b$ and $f$ allows us to show, there exists  $\lambda>0$, such that
\begin{equation*}
\begin{aligned}
&\lambda\E\lt[ |\hat{\eta}-\eta|^2\rt]\\
\le &\E\lt[\wt{\E}\lt[\lt(P(\partial_\mu b)_2(\PP_{(\xi,\hat{\eta})},\xi,\hat{\eta},\wt{\xi},\wt{\hat{\eta}})-P(\partial_\mu b)_2(\PP_{(\xi,\eta)},\xi,\eta,\wt{\xi},\wt{\eta})\rt)(\wt{\hat{\eta}}-\wt{\eta})\rt]\rt]\\
&+\E\lt[\lt(P\partial_u b(\PP_{(\xi,\hat{\eta})},\xi,\hat{\eta})-P\partial_u b(\PP_{(\xi,\eta)},\xi,\eta)\rt)({\hat{\eta}}-{\eta})\rt]\\
&+\E\lt[\wt{\E}\lt[\lt((\partial_\mu f)_2(\PP_{(\xi,\hat{\eta})},\xi,\hat{\eta},\wt{\xi},\wt{\hat{\eta}})-(\partial_\mu f)_2(\PP_{(\xi,\eta)},\xi,\eta,\wt{\xi},\wt{\eta})\rt)(\wt{\hat{\eta}}-\wt{\eta})\rt]\rt]\\
&+\E\lt[\lt(\partial_u f(\PP_{(\xi,\hat{\eta})},\xi,\hat{\eta})-\partial_u f(\PP_{(\xi,\eta)},\xi,\eta)\rt)({\hat{\eta}}-{\eta})\rt]\\
=& \E\lt[\wt{\E}\lt[\lt(P(\partial_\mu b)_2(\PP_{(\xi,\hat{\eta})},\xi,\hat{\eta},\wt{\xi},\wt{\hat{\eta}})-\hat{P}(\partial_\mu b)_2(\PP_{(\hat{\xi},\hat{\eta})},\hat{\xi},\hat{\eta},\wt{\hat{\xi}},\wt{\hat{\eta}})\rt)(\wt{\hat{\eta}}-\wt{\eta})\rt]\rt]\\
&+\E\lt[\lt(P\partial_u b(\PP_{(\xi,\hat{\eta})},\xi,\hat{\eta})-\hat{P}\partial_u b(\PP_{(\hat{\xi},\hat{\eta})},\hat{\xi},\hat{\eta})\rt)({\hat{\eta}}-{\eta})\rt]\\
&+\E\lt[\wt{\E}\lt[\lt((\partial_\mu f)_2(\PP_{(\xi,\hat{\eta})},\xi,\hat{\eta}),\wt{\xi},\wt{\hat{\eta}})-(\partial_\mu f)_2(\PP_{(\hat{\xi},\hat{\eta})},\hat{\xi},\hat{\eta},\wt{\hat{\xi}},\wt{\hat{\eta}})\rt)(\wt{\hat{\eta}}-\wt{\eta})\rt]\rt]\\
&+\E\lt[\lt(\partial_u f(\PP_{(\xi,\hat{\eta})},\xi,\hat{\eta})-\partial_u f(\PP_{(\hat{\xi},\hat{\eta})},\hat{\xi},\hat{\eta})\rt)({\hat{\eta}}-{\eta})\rt].
\end{aligned}
\end{equation*}
Then we get from the boundedness of $P$ (due to the proof of Theorem \ref{thm:suffi1}) and the Lipschitz continuity of the derivatives of $b$ and $f$ (see (H4)) that
\begin{equation*}
\begin{aligned}
\lambda\E\lt[ |\hat{\eta}-\eta|^2\rt]
\le &C\lt(\E\lt[|\hat{\eta}-\eta|^2\rt]\rt)^{1/2}\lt(\lt(\E\lt[|\hat{P}-P|^2\rt]\rt)^{1/2}+\lt(\E\lt[|\hat{\xi}-\xi|^2\rt]\rt)^{1/2}\rt).
\end{aligned}
\end{equation*}
Thus, we have 
$$
\E\lt[|\hat{\eta}-\eta|^2\rt]\le C\lt(\E\lt[|\hat{\xi}-\xi|^2+|\hat{P}-P|^2\rt]\rt).
$$
Hence,
the mapping $(P,\xi)\to \eta$ is Lipschitz in the $L^2$-norm, i.e.,  there exists a Lipschitz function $\eta: L^2(\mathcal{F}_t)\times L^2(\mathcal{F}_t)\to L^2(\mathcal{F}_t;U)$ such that $\eta=\eta(P,\xi)$ solves (\ref{eq:solvability}). 	
\end{proof}

With the above lemma, we know, in particular, that the solution of $(\ref{eq:solvability})$ is unique. Thus, the optimal control $u^*_t$, if it exists,  must satisfy the relation:  $u^*_t=\eta(P_t,X^*_t)$.  
Therefore, with such $u^*_t$, under the conditions of Theorem \ref{thm:suffi1}, the system (\ref{eq:necessary}) becomes
\begin{equation}
\label{eq:necessary1}
\begin{cases}
\begin{aligned}
X_t^*=&x+\sigma B^H_t+\int^t_0 b(\PP_{(X_s^*, \eta(P_s,X^*_s))},X_s^*, \eta(P_s,X^*_s))\d s,\\ 
P_t =&\partial_x g(X^*_T,\PP_{X^*_T})+\wt{\E}\lt[\partial_\mu g(\wt{X^*_T},\PP_{X^*_T},X^*_T)\rt]-\int^T_t \beta_s\d W_s\\
&+\int^T_t\bigg\{\wt{\E}\lt[\wt{P}_s(\partial_{\mu}b)_1\lt(\PP_{(X_s^*,\eta(P_s,X^*_s))},\wt{X}^*_s, \eta(\wt{P}_s,\wt{X}^*_s), {X}_s^*,\eta(P_s,X^*_s)\rt)\rt]\\&+ P_s\partial_xb\lt(\PP_{(X^*_s,\eta(P_s,X^*_s))}, X_s^*, \eta(P_s,X^*_s)\rt)\\
&+\partial_x f(\PP_{(X^*_s,\eta(P_s,X^*_s))},X^*_s,\eta(P_s,X^*_s))\\
&+\wt{\E}\lt[(\partial_\mu f)_1( \PP_{(X^*_s,\eta(P_s,X^*_s))},\wt{X}^*_s,\eta(\wt{P}_s,\wt{X}^*_s),X^*_s,\eta(P_s,X^*_s))\rt]\bigg\}\d s,
\end{aligned}
\end{cases}  	
\end{equation}
which is a coupled mean-field FBSDE, with the forward SDE  driven by a fractional Brownian motion. 
\begin{theorem}
Under the same conditions as in Theorem \ref{thm:suffi1}. There exists a unique solution $(X^*_t, P_t)\in S^2_{\mathbb{F}}([0,T];\R)\times S^2_{\mathbb{F}}([0,T];\R)$ of mean-field FBSDE (\ref{eq:necessary1})  for a small enough $T>0$. Here $S^2_{\mathbb{F}}([0,T];\R)$ denotes the space of real-valued $\mathbb{F}$-adapted continuous uniformly square integrable processes under the norm $\|\varphi\|_{S^2_{\mathbb{F}}([0,T];\R)}=\E\lt[\sup_{0\le s\le T}|\varphi_s|^2\rt]$.
\end{theorem}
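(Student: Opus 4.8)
The plan is to solve the coupled system (\ref{eq:necessary1}) by a Banach fixed‑point argument that decouples the forward and the backward equations, the coupling being weak for $T$ small. A key preliminary remark is that, under (H7), $\sigma$ is a constant, hence $\partial_\mu\sigma\equiv 0$; consequently the Malliavin‑derivative term $\partial_\mu\sigma(\PP_{X^*_s},X^*_s)\,\E[\mathbb{D}^H_sP_s]$ disappears from the backward equation in (\ref{eq:necessary1}), which is then an ordinary mean‑field BSDE driven only by $W$, and the fractional Brownian motion enters the forward equation only through the additive, unknown‑free term $\sigma B^H_t$.

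First I would fix $P$ in the closed ball $\mathcal B_R:=\{P\in S^2_{\mathbb{F}}([0,T];\R):\ \|P\|_\infty\le R\}$, where $R:=C\exp\{2CT\}$ is the a priori $L^\infty$‑bound obtained, via the comparison theorem of \cite{BLP}, exactly as in the proof of Theorem \ref{thm:suffi1}. Given such a $P$, I would solve the forward equation $X_t=x+\sigma B^H_t+\int_0^t b(\PP_{(X_s,\eta(P_s,X_s))},X_s,\eta(P_s,X_s))\,\d s$: its drift is Lipschitz in $(X_s,\PP_{X_s})$ in the McKean--Vlasov sense, because $b$ is Lipschitz (H2) and $(P,\xi)\mapsto\eta(P,\xi)$ is $L^2$‑Lipschitz with a constant that is uniform on $\mathcal B_R$ (the Lemma preceding the theorem, whose proof uses precisely $\|P\|_\infty\le R$), so Theorem \ref{thm-gir-unique} with $\gamma\equiv0$ (see Remark \ref{remark}) yields a unique $X=X[P]$, which moreover lies in $S^2_{\mathbb{F}}([0,T];\R)$ because $\sup_{t\le T}|B^H_t|\in L^2$ and the remaining part of $X$ is an absolutely continuous drift. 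Then, freezing $X=X[P]$, I would solve the mean‑field BSDE in (\ref{eq:necessary1}): its terminal value $\partial_x g(X_T,\PP_{X_T})+\wt\E[\partial_\mu g(\wt X_T,\PP_{X_T},X_T)]$ lies in $L^2(\mathcal F_T)$ by (H5), and its generator is Lipschitz in $(P',\PP_{P'})$, since $P'$ enters linearly through $P'_s\partial_x b$ and, together with $\wt P'_s$, multiplies bounded quantities, while $\eta(P'_s,X_s)$ enters only through the bounded Lipschitz derivatives $\partial_\mu b,\partial_\mu f$ and is itself $L^2$‑Lipschitz in $P'_s$; hence the classical mean‑field BSDE theory of \cite{BLP} gives a unique pair $(P',\beta')$, and the same comparison argument shows $\|P'\|_\infty\le R$. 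This defines a map $\Psi:\mathcal B_R\to\mathcal B_R$, $\Psi(P):=P'$, whose fixed point, together with the associated $X$ and $\beta$, is a solution of (\ref{eq:necessary1}).

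It remains to show $\Psi$ is a contraction for $T$ small. For $P^1,P^2\in\mathcal B_R$ put $X^i:=X[P^i]$ and $\bar P^i:=\Psi(P^i)$. Since $X^1-X^2$ solves a pure‑drift equation, Gronwall's inequality together with the Lipschitz property of $\eta$ gives $\E[\sup_{t\le T}|X^1_t-X^2_t|^2]\le CTe^{CT}\,\E[\sup_{t\le T}|P^1_t-P^2_t|^2]$. Plugging this into the standard stability estimate for the mean‑field BSDE — $\E[\sup_{t\le T}|\bar P^1_t-\bar P^2_t|^2]$ is bounded by the $L^2$‑distance of the terminal data plus $CT$ times the $L^2$‑distance of the generators plus $CT\,\E[\sup_{t\le T}|\bar P^1_t-\bar P^2_t|^2]$ (the last term absorbed for $T$ small) — yields $\E[\sup_{t\le T}|\bar P^1_t-\bar P^2_t|^2]\le C'(T)\,\E[\sup_{t\le T}|P^1_t-P^2_t|^2]$ with $C'(T)\to0$ as $T\to0$. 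Hence $\Psi$ is a contraction for $T$ small; its unique fixed point gives existence, and uniqueness in the whole space follows since any solution $(X,P,\beta)$ of (\ref{eq:necessary1}) has $P$ bounded by $R$ (comparison), so $P\in\mathcal B_R$ is a fixed point of $\Psi$ and $X=X[P]$. The main obstacles I expect are the a priori $L^\infty$‑bound on $P$, which is what makes the Lipschitz constant of $\eta$ — and hence of the BSDE generator — uniform over $\mathcal B_R$, and the Lipschitz dependence of the terminal data on $X_T$, which the small‑$T$ contraction genuinely needs: this uses the Lipschitz continuity of $\partial_x g$ and $\partial_\mu g$, a regularity slightly beyond the boundedness stated in (H5). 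The fractional nature of the noise causes no difficulty here precisely because $\sigma$ is constant.
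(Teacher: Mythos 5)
Your argument is correct in substance and rests on the same core ingredients as the paper's proof — the a priori bound $0\le P\le C e^{2C(T-t)}$ obtained by the comparison argument from Theorem \ref{thm:suffi1}, the $L^2$-Lipschitz continuity of the map $(P,\xi)\mapsto\eta(P,\xi)$, and a small-time Banach fixed point — but you organize the fixed point differently. The paper iterates the pair: given a frozen input $(x,p)\in S^2_{\mathbb{F}}([0,T];\R)\times S^2_{\mathbb{F}}([0,T];\R)$ it defines $X$ by an explicit integral and $P$ by a \emph{linear} mean-field BSDE whose coefficients are all evaluated at $(x_s,\eta(p_s,x_s))$, and then contracts jointly in $(x,p)$. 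You instead contract in $P$ alone: for each $P$ in the ball $\mathcal{B}_R$ you solve the forward McKean--Vlasov equation completely (with $P$ frozen inside $\eta$) and then the backward equation with $X[P]$ frozen. What your composition buys is a genuinely small factor in front of the terminal-data difference: $\E\lt[|X^1_T-X^2_T|^2\rt]\le CTe^{CT}\,\E\lt[\sup_{t\le T}|P^1_t-P^2_t|^2\rt]$, so the dependence of $\partial_xg,\partial_\mu g$ on $X_T$ transfers a factor $T$ into the BSDE stability estimate; in the paper's two-variable scheme the corresponding terminal term $C\E\lt[|x^1_T-x^2_T|^2\rt]$ carries no such factor, so your bookkeeping is at this point cleaner. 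Two caveats. First, as you honestly flag, you need Lipschitz continuity of $\partial_xg$ and $\partial_\mu g$, which goes beyond the boundedness stated in (H5); the paper's own contraction estimate invokes exactly the same Lipschitzianity, so you are on equal footing there. Second, in your backward step the unknown $P'$ also appears inside $\eta(P'_s,X_s)$, so the generator contains products such as $P'_s\,\partial_xb\lt(\PP_{(X_s,\eta(P'_s,X_s))},X_s,\eta(P'_s,X_s)\rt)$, which are only \emph{locally} Lipschitz in $P'$; to invoke the well-posedness result of \cite{BLP} you should either truncate $P'$ at the level $R$ inside $\eta$ and then remove the truncation via the comparison bound $0\le P'\le R$, or freeze $\eta(P_s,X_s)$ with the outer $P$ (as the paper does), which makes the inner BSDE linear with bounded coefficients. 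With either routine repair your scheme goes through and yields the same small-time existence and uniqueness statement.
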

\begin{proof}
	For any given $(x_t,p_t)\in S^2_{\mathbb{F}}([0,T];\R)\times S^2_{\mathbb{F}}([0,T];\R)$, we construct the following map $(X_t,P_t)=I(x_t,p_t)$:
	\begin{equation}
	\label{eq:necessary2}
	\begin{cases}
	\begin{aligned}
	X_t=&x+\sigma B^H_t+\int^t_0 b(\PP_{(x_s, \eta(p_s,x_s))},x_s, \eta(p_s,x_s))\d s,\\ 
	P_t =&\partial_x g(x_T,\PP_{x_T})+\wt{\E}\lt[\partial_\mu g(\wt{x}_T,\PP_{x_T},x_T)\rt]-\int^T_t \beta_s\d W_s\\
	&+\int^T_t\bigg\{\wt{\E}\lt[\wt{P}_s(\partial_{\mu}b)_1\lt(\PP_{(x_s,\eta(p_s,x_s))},x_s, \eta(\wt{p}_s,\wt{x}_s), {x}_s,\eta(p_s,x_s)\rt)\rt]\\&+ P_s\partial_xb\lt(\PP_{(x_s,\eta(p_s,x_s))}, x_s, \eta(p_s,x_s)\rt)+\partial_x f(\PP_{(x_s,\eta(p_s,x_s))},x_s,\eta(p_s,x_s))\\
	&+\wt{\E}\lt[(\partial_\mu f)_1(\PP_{(x_s,\eta(p_s,x_s))},x_s, \eta(\wt{p}_s,\wt{x}_s), {x}_s,\eta(p_s,x_s))\rt]\bigg\}\d s.
	\end{aligned}
	\end{cases}  	
	\end{equation}
	From the proof of Theorem \ref{thm:suffi1} we know that $0\le P\le C$ and the square integrability of $X$ can be derived from the linear growth of $b$ as in the proof of Theorem \ref{thm-gir-unique}. Hence $I$ maps from $S^2_{\mathbb{F}}([0,T];\R)\times S^2_{\mathbb{F}}([0,T];\R)$ to itself. Now we prove that it is a contracting map.  
	For $\lt(x^{1},p^{1}\rt)$ and $\lt(x^{2},p^{2}\rt)$ in $S^2_{\mathbb{F}}([0,T];\R)\times S^2_{\mathbb{F}}([0,T];\R)$,  from the Lipschitzianity of the functions $b$, the boundedness and Lipschitzianity of $\partial_x g$, $\partial_mu g$, $\partial_\mu b$, $\partial_x b$, $\partial_\mu f$ and $\partial_x f$, and the boundedness of $P^{1}$ and $P^{2}$, there exist a constant $C$ which does not depend on $T$, that
	\begin{equation}
	\begin{aligned}
    \E\lt[\sup_{0\le s\le t}|X^{1}_s-X^{2}_s|^2\rt]\le& C\int^t_0\lt(\E\lt[|x^{1}_s-x^{2}_s|^2\rt]+\E\lt[|p^{1}_s-p^{2}_s|^2\rt]\rt)\d s\\
    \le&Ct\lt(\E\lt[\sup_{0\le s\le t}|x^{1}_s-x^{2}_s|^2\rt]+\E\lt[\sup_{0\le s\le t}|p^{1}_s-p^{2}_s|^2\rt]\rt),
	\end{aligned}
	\end{equation}
	and 
		\begin{equation}
		\begin{aligned}
		\E\lt[\sup_{t\le s\le  T}|P^{1}_s-P^{2}_s|^2\rt]\le&
		C\E\lt[|x^{1}_T-x^{2}_T|^2\rt]+ C\int^T_t\lt(\E\lt[|x^{1}_s-x^{2}_s|^2]+\E[|p^{1}_s-p^{2}_s|^2\rt]\rt)\d s\\
		\le &C(T-t)\lt(\E\lt[\sup_{t\le s\le T}|x^{1}_s-x^{2}_s|^2\rt]+\E\lt[\sup_{t\le s\le T}|p^{1}_s-p^{2}_s|^2\rt]\rt).
		\end{aligned}
		\end{equation}
		Hence for a small enough $T$ such that $CT<\alpha<1$, we have 
		\begin{equation}
		\begin{aligned}
		&\E\lt[\sup_{0\le s\le  T}|X^{1}_s-X^{2}_s|^2+\sup_{0\le s\le  T}|P^{1}_s-P^{2}_s|^2\rt]\\
		\le&\alpha\E\lt[\sup_{0\le s\le T}|x^{1}_s-x^{2}_s|^2+\sup_{0\le T\le T}|p^{1}_s-p^{2}_s|^2\rt],
		\end{aligned}
		\end{equation}
		which means $I$ is a contracting map. Therefore, there exists a unique fixed point $(X^*,P)$, which is the solution for equation (\ref{eq:necessary1}).
\end{proof}
        With this solution $(X^*,P)$, we substitute into $\eta(X^*_t,P_t)$ and get $u^*_t$, which is a feedback sense optimal control, then due to Theorem \ref{thm:suffi1}, $(u^*_t,X^*_t)$ is the optimal pair for the stochastic control system.
\begin{remark}
        One can see that the coupled mean-field FBSDE (\ref{eq:necessary1}) is only in a special form. The general form of the fully coupled mean-field FBSDE driven by both a fractional Brownian motion and a classical Brownian motion is foreseen for a forthcoming paper. 
\end{remark}

\end{document}